\documentclass[leqno]{amsart}

\usepackage{stmaryrd,graphicx,mathrsfs}
\usepackage{amssymb,amsmath,amscd,amsthm,amsfonts}
\usepackage{float}
\usepackage[all,cmtip]{xy}
\DeclareMathAlphabet{\mathpzc}{OT1}{pzc}{m}{it}
\usepackage{txfonts,pxfonts,latexsym,wasysym}
\usepackage[pdftex]{hyperref}
\author{Jeremy Brazas}

\address{Department of Mathematics \& Statistics; Georgia State University, Atlanta, Georgia 30303}

\title{Generalized covering space theories}

\keywords{Fundamental group, generalized covering map, coreflective hull, unique path lifting property}


%
\newtheorem{theorem}{Theorem}
\newtheorem{lemma}[theorem]{Lemma}
\newtheorem{proposition}[theorem]{Proposition}
\newtheorem{corollary}[theorem]{Corollary}
\theoremstyle{definition}\newtheorem{definition}[theorem]{Definition}

\theoremstyle{definition}\newtheorem{example}[theorem]{Example}
\theoremstyle{definition}
\theoremstyle{definition}
\theoremstyle{definition}\newtheorem{remark}[theorem]{Remark}
\newcommand{\ds}{\displaystyle}
\newcommand{\wt}{\widetilde}

\newcommand{\bbr}{\mathbb{R}}

\newcommand{\bbh}{\mathbb{H}}

\newcommand{\oX}{\overline{X}}
\newcommand{\ox}{\overline{x}}

\newcommand{\hX}{\widehat{X}}
\newcommand{\hx}{\hat{x}}

\newcommand{\hf}{\widehat{f}}
\newcommand{\tX}{\widetilde{X}}
\newcommand{\tY}{\widetilde{Y}}
\newcommand{\ty}{\tilde{y}}
\newcommand{\ui}{[0,1]}

\newcommand{\tx}{\tilde{x}}

\newcommand{\tXh}{\widetilde{X}_{H}}
\newcommand{\txh}{\tilde{x}_{H}}

\newcommand{\pxxo}{P(X,x_0)}
\newcommand{\pionex}{\pi_{1}(X,x_0)}

\newcommand{\mcc}{\mathcal{C}}
\newcommand{\mcd}{\mathcal{D}}

\newcommand{\scrh}{\mathscr{H}}

\newcommand{\topz}{\mathbf{Top_{0}}}

\newcommand{\dcovx}{\mathbf{DCov}(X)}

\newcommand{\btopz}{\mathbf{bTop_{0}}}
\newcommand{\delcovx}{\Delta\mathbf{Cov}(X)}

\newcommand{\ccov}{\mathbf{Cov}_{\mcc}}
\newcommand{\ccovx}{\mathbf{Cov}_{\mcc}(X)}

\newcommand{\hccovx}{\mathbf{Cov}_{\hullc}(X)}
\newcommand{\hccovcx}{\mathbf{Cov}_{\hullc}(c(X))}

\newcommand{\hullc}{\scrh(\mcc)}
\newcommand{\hulld}{\scrh(\mcd)}
\newcommand{\lpcz}{\mathbf{lpc_0}}
\newcommand{\fan}{\mathbf{Fan}}
\newcommand{\mcf}{\mathcal{F}}
\begin{document}
\begin{abstract}
In this paper, we unify various approaches to generalized covering space theory by introducing a categorical framework in which coverings are defined purely in terms of unique lifting properties. For each category $\mathcal{C}$ of path-connected spaces having the unit disk as an object, we construct a category of $\mathcal{C}$-coverings over a given space $X$ that embeds in the category of $\pi_1(X,x_0)$-sets via the usual monodromy action on fibers. When $\mathcal{C}$ is extended to its coreflective hull $\mathscr{H}(\mathcal{C})$, the resulting category of based $\mathscr{H}(\mathcal{C})$-coverings is complete, has an initial object, and often characterizes more of the subgroup lattice of $\pi_1(X,x_0)$ than traditional covering spaces.

We apply our results to three special coreflective subcategories: (1) The category of $\Delta$-coverings employs the convenient category of $\Delta$-generated spaces and is universal in the sense that it contains every other generalized covering category as a subcategory. (2) In the locally path-connected category, we preserve notion of generalized covering due to Fischer and Zastrow and characterize the topology of such coverings using the standard whisker topology. (3) By employing the coreflective hull $\mathbf{Fan}$ of the category of all contractible spaces, we characterize the notion of continuous lifting of paths and identify the topology of $\mathbf{Fan}$-coverings as the natural quotient topology inherited from the path space.
\end{abstract}
\maketitle

\section{Introduction}
When a topological space $X$ is path-connected, locally path-connected, and semilocally simply-connected, the entire subgroup lattice of $\pionex$ can be understood in terms of the covering spaces of $X$ \cite{Spanier66}. More precisely, the monodromy functor $\mu:\mathbf{Cov}(X)\to\pionex\mathbf{Set}$ from the category of coverings over $X$ to the category of $\pionex$-sets is an equivalence of categories. If $X$ lacks a simply connected covering space, more sophisticated machinery is often needed to understand the combinatorial structure of $\pionex$; however, significant advancements have been made in the past two decades. In this paper, we develop a categorical framework which unifies various attempts to generalize covering space theory, clarifies their relationship to classical topological constructions, and illuminates the theoretical extent to which such methods provide information about the fundamental group.

Many authors have attempted to extend the covering-theoretic approach to more general spaces, e.g. \cite{BP07uniform, Brazsemi,BDLM10uniform, Foxonshape72,Lub62}. The usefulness of one generalization over another depends on the intended application. For instance, Fox's overlays \cite{Foxonshape72} provide no more information about the subgroup lattice of $\pionex$ than traditional covering maps but admit a much more general classification in terms of the fundamental pro-group. Semicoverings \cite{Brazsemi} are intimately related to topological group structures on fundamental groups \cite{Brazoverlay,FZ13} and have natural applications to general topological group theory \cite{BrazNS}. In the current paper, we consider maps defined purely in terms of unique lifting properties. Our definitions are inspired by the initial approach of Hanspeter Fischer and Andreas Zastrow in \cite{FZ07} and the subsequent papers \cite{BDLM08} and \cite{Dydak11}.

Generalized covering maps defined in terms of unique lifting properties often exist when standard covering maps do not and provide combinatorial information about fundamental groups of spaces which are not semilocally simply connected \cite{FRVZ11,FZ07}. For instance, one-dimensional spaces such as the Hawaiian earring, Menger curve, and Sierpinski carpet admit certain generalized ``universal" coverings having the structure of topological $\bbr$-trees (called universal $\lpcz$-coverings in the current paper) on which $\pionex$ acts by homeomorphism. These $\bbr$-trees behave like generalized Caley graphs \cite{FZ013caley} and have produced an explicit word calculus for the fundamental group of the Menger curve \cite{FZ14menger} in which the fundamental groups of all other one-dimensional and planar Peano continua embed.

In the current paper, we begin with a category $\mcc$ of path-connected spaces having the unit disk as an object and define $\mcc$-coverings to have a unique lifting property with respect to maps on the objects of $\mcc$. In Section 2, we introduce categories of $\mcc$-coverings and explore their properties. In particular, we show the category $\ccovx$ of $\mcc$-coverings over a given space $X$ canonically embeds into $\pionex\mathbf{Set}$ by a fully faithful monodromy functor $\mu:\ccovx\to\pionex\mathbf{Set}$. Thus a $\mcc$-covering $p:\tX\to X$ is completely characterized up to isomorphism by the conjugacy class of the stabilizer subgroup $H=p_{\ast}(\pi_1(\tX,\tx_0))$. The category of based $\mcc$-coverings becomes highly structured - exhibiting many properties that categories of classical covering maps lack - when we take $\mcc$ to be the coreflective hull of a category of simply connected spaces.

The literature on fundamental groups of wild spaces and generalized covering spaces primarily takes the viewpoint of considering fundamental groups at a chosen basepoint and applying the relevant infinite group theoretic concepts. In some situations, it may be preferable to avoid picking a basepoint and work with the fundamental groupoid $\pi_1(X)$. In Section 3, we relate our categorical treatment of generalized coverings to covering morphisms of groupoids \cite{Brown06}.

In Section 4, we identify the convenient (in the sense of \cite{BT,Steenrod}) category of $\Delta$-generated spaces, used in directed topology \cite{FRconvenientfordirectedhom} and diffeology \cite{CSWdtopology} as the setting for a ``universal" theory of generalized covering maps. The category of $\Delta$-coverings is universal in the sense that any other category of $\mcc$-coverings canonically embeds within it. In a more practical sense, any attempt to characterize the subgroup structure of $\pionex$ using maps having unique lifting of paths and homotopies of paths is retained as a special case of $\Delta$-coverings.

Since we define $\mcc$-coverings only in terms of abstract unique lifting properties, we are left with two important questions for given $\mcc$.
\begin{enumerate}
\item[] \textbf{Structure Question:} If a $\mcc$-covering exists, is there a simple characterization of the topology of the $\mcc$-covering space $\tX$?\\
\item[] \textbf{Existence Question:} Monodromy $\mu:\ccovx\to\pionex\mathbf{Set}$ is fully faithful but need not be essentially surjective. Thus a subgroup $H\subseteq \pionex$ is said to be a $\mcc$\textit{-covering subgroup} if there exists a corresponding $\mcc$-covering $p:\tX\to X$ and $\tx_0\in\tX$ such that $H=p_{\ast}(\pi_1(\tX,\tx_0))$. Is there a practical characterization of the $\mcc$-covering subgroups of $\pionex$?
\end{enumerate}

In general, the Existence Question is challenging and must be taken on a case-by-case basis depending on which category $\mcc$ is being used. Some necessary and sufficient conditions for the existence of $\mcc$-coverings are known for the locally path-connected category $\mcc=\lpcz$ \cite{BF15,BDLM08,FRVZ11,FZ07}; however, general characterizations remain the subject of ongoing research beyond the scope of the current paper. Nevertheless, it is reasonable to expect that an answer to the Structure Question will provide some help in answering the Existence Question.

In Sections 5 and 6, we answer the Structure Question in two important cases. First, we show the topology of $\lpcz$-coverings must always be the so-called ``standard" or ``whisker" topology. Moreover, we show that if $X$ is first countable, then the notions of $\lpcz$-covering and $\Delta$-covering over $X$ agree. Second, we consider the coreflective hull $\mathbf{Fan}$ of all contractible spaces, which is generated by so-called \textit{directed arc-fan spaces}. We identify the topology of $\mathbf{Fan}$-coverings, which characterize the notion of continuous lifting of paths \cite{Brazsemi}, as the natural quotient topology inherited from the path space. The Existence Question for $\mathbf{Fan}$-coverings remains open and is likely to have application to topological group theory.
\section{Generalized covering theories}
\subsection{Notational considerations}

Throughout this paper, $X$ will denote a path-connected topological space with basepoint $x_0\in X$. We take $\topz$ to be the categories of path-connected topological spaces and maps (i.e. continuous function) and $\btopz$ to be the category of based path-connected spaces and based maps. All subcategories of $\topz$ and $\btopz$ considered in this paper are assumed to be full subcategories. Given a subcategory $\mcc\subset \topz$, the corresponding category of based spaces $(X,x)$ where $X\in\mcc$ is denoted $\mathbf{b}\mcc$. If $f:(X,x)\to(Y,y)$ is a based map, $f_{\ast}:\pi_1(X,x)\to\pi_1(Y,y)$ will denote the homomorphism induced by $f$ on fundamental groups.

Let $\ui$ denote the unit interval and $D^2=\{(x,y)\in\bbr^2|x^2+y^2\leq 1\}$ the closed unit disk with basepoint $d=(1,0)$. A \textit{path} in a space $X$ is a continuous function $\alpha:[0,1]\to X$. If $\alpha:\ui\to X$ is a path, then $\alpha^{-}(t)=\alpha(1-t)$ is the reverse path. If $\alpha,\beta:\ui\to X$ are paths such that $\alpha(1)=\beta(0)$, then $\alpha\cdot\beta$ denotes the usual concatenation of paths. The constant path at $x\in X$ is denoted by $c_x$.
\begin{definition}\label{upldef}
A map $f:X\to Y$ has the \textit{unique path lifting property} if for any two paths $\alpha,\beta:[0,1]\to X$, we have $\alpha=\beta$ whenever $f\circ \alpha=f\circ \beta$ and $\alpha(0)=\beta(0)$.
\end{definition}
Let $P(X)$ denote the space of paths in $X$ with the compact-open topology. The compact-open topology of $P(X)$ is generated by the subbasic sets $\ds\langle K,U\rangle =\{\alpha\in P(X)|\alpha(K)\subseteq U\}$ where $K\subset \ui$ is compact and $U\subset X $ is open. For given $x\in X$, let $P(X,x)=\{\alpha\in P(X)|\alpha(0)=x\}$ denote the subspace of paths which start at $x$ and $\Omega(X,x)=\{\alpha\in P(X)|\alpha(0)=x=\alpha(1)\}$ denote the subspace of loops based at $x$. A map $f:(X,x)\to (Y,y)$ induces a continuous function $f_{\#}:P(X,x)\to P(Y,y)$ given by $f_{\#}(\alpha)=f\circ \alpha$. Note that $f:X\to Y$ has the unique path lifting property if and only if $f_{\#}:P(X,x)\to P(Y,p(x))$ is injective for every $x\in X$.
\subsection{Disk-coverings}

In the attempt to minimize the conditions one might impose on a covering-like map $p:E\to X$, we are led to the following definition due to Dydak \cite{Dydak11}. The definition is minimal in the sense that our goal is to retain information about the traditional fundamental group $\pionex$ and to do this one should require unique lifting of all paths and homotopies of paths.
\begin{definition}\label{diskcoveringdef}
A map $p:E\to X$ is a \textit{disk-covering} if $E$ is non-empty, path-connected and if for every $e\in E$ and map $f:(D^2,d)\to(X,p(e))$, there is a unique map $\hf:(D^2,d)\to(E,e)$ such that $p\circ\hf=f$.
\end{definition}
Since the unit interval is a retract of $D^2$, it is clear that if $p:E\to X$ is a disk-covering, then every path $\alpha:(\ui,0)\to (X,p(e))$ also has a unique lift $\wt{\alpha}_{e}:(\ui,0)\to (E,e)$ such that $p\circ\wt{\alpha}_{e}=\alpha$. It follows that $p$ must be surjective. The induced homomorphism $p_{\ast}:\pi_1(E,e)\to \pi_1(X,p(e))$ is injective for every $e\in E$ and we have $[\alpha]\in p_{\ast}(\pi_1(E,e))$ if and only if the unique lift $\wt{\alpha}_{e}:(\ui,0)\to (E,e)$ such that $p\circ\wt{\alpha}_{e}=\alpha$ is a loop.

A morphism of disk-coverings $p:E\to X$ and $q:E'\to X'$ is a pair $(f,g)$ of maps $f:E\to E'$ and $g:X\to X'$ such that $g\circ p=q\circ f$. Let $\mathbf{DCov}$ denote the category of disk-coverings and for a given space $X$, let $\dcovx$ be the subcategory of disk-coverings over $X$ where morphisms are pairs $(f,id)$, that is, commuting triangles:
\[\xymatrix{
E \ar[dr]_-{p} \ar[rr]^-{f} && E' \ar[dl]^-{q}\\ & X  }\]

If $G=\pionex$, then traditional arguments from classical covering space theory imply the existence of a canonical ``monodromy" functor $\mu:\dcovx\to G\mathbf{Set}$ to the category $G\mathbf{Set}$ of $G$-Sets (sets $A$ with a group action $(g,a)\mapsto g\cdot a$) and $G$-equivariant functions (functions $f:A\to B$ satisfying $f(g\cdot a)=g\cdot f(a)$). On objects, $\mu$ is defined as the fiber $\mu(p)=p^{-1}(x_0)$. If $q:E'\to X$ is a disk-covering and $f:E\to E'$ is a map such that $q\circ f=p$, then $\mu(f)$ is the restriction of $f$ to a $G$-equivariant function $p^{-1}(x_0)\to q^{-1}(x_0)$.
\begin{lemma}
\label{faithfulpropdiskcov}
If $(X,x_0)\in\btopz$ and $G=\pionex$, the functor $\mu:\dcovx\to G\mathbf{Set}$ is faithful.
\end{lemma}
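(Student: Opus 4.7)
My approach is the standard uniqueness-of-lifts argument from classical covering space theory, adapted to the weaker definition of disk-covering. Suppose $f,f':E\to E'$ are two morphisms in $\dcovx$ with $\mu(f)=\mu(f')$; I must show $f=f'$ as maps on $E$. The hypothesis $\mu(f)=\mu(f')$ says exactly that $f$ and $f'$ agree on the fiber $p^{-1}(x_0)$.

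Given an arbitrary point $e\in E$, I will exhibit a path connecting $e$ to a basepoint in the fiber over $x_0$ and then lift it uniquely. Concretely, pick any $e_0\in p^{-1}(x_0)$, which is non-empty because the excerpt observes that every disk-covering is surjective. Using that $E$ is path-connected, choose a path $\gamma:\ui\to E$ with $\gamma(0)=e_0$ and $\gamma(1)=e$. Then $f\circ\gamma$ and $f'\circ\gamma$ are two paths in $E'$ with $q\circ(f\circ\gamma)=p\circ\gamma=q\circ(f'\circ\gamma)$, and they share the common initial point $f(e_0)=f'(e_0)$.

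Now I invoke the fact, noted immediately after Definition~\ref{diskcoveringdef}, that every disk-covering has the unique path lifting property. Applied to $q:E'\to X$, this forces $f\circ\gamma=f'\circ\gamma$; evaluating at $t=1$ gives $f(e)=f'(e)$, and $e$ was arbitrary.

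There is no real obstacle here: the ingredients are all either definitional or immediate corollaries already recorded in the excerpt (surjectivity of $p$, path-connectedness of $E$, unique path lifting for $q$). The only subtlety worth flagging is that one does not need the full $G$-equivariance of $\mu(f)=\mu(f')$, but merely agreement at a single chosen basepoint of the fiber; the equivariance will become essential when proving that $\mu$ is also full.
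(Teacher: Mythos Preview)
Your proof is correct and is essentially identical to the paper's own argument: fix a point $e_0$ in the fiber, connect an arbitrary $e\in E$ to $e_0$ by a path $\gamma$, and use the unique path lifting property of $q$ to conclude $f\circ\gamma=f'\circ\gamma$. Your closing observation that only agreement at a single fiber point (rather than full $G$-equivariance) is needed for faithfulness is a nice clarification the paper does not make explicit.
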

\begin{proof}
Suppose $p:E\to X$ and $q:E'\to X$ are disk-coverings over $X$. Let $f,g:E\to E'$ be maps such that $q\circ f=p=q\circ f$ and $\mu(f)=\mu(g)$ as functions $p^{-1}(x_0)\to q^{-1}(x_0)$. To see that $\mu$ is faithful, we check that $f=g$. Fix $e_0\in p^{-1}(x_0)$, pick a point $e\in E$, and a path $\gamma:\ui\to E$ from $e_0$ to $e$. By assumption, we have $f(e_0)=g(e_0)=e_{0}'$ for some point $e_{0}'\in q^{-1}(x_0)$. If $\beta:\ui\to E'$ is the unique path such that $q\circ\beta=p\circ\gamma$ and $\beta(0)=e_{0}'$, then $f\circ\gamma=\beta=g\circ\gamma$ by unique path lifting. In particular, $f(e)=\beta(1)=g(e)$ and thus $f=g$.
\end{proof}
The functor $\mu:\dcovx\to G\mathbf{Set}$ will not typically be full since there are non-isomorphic disk-coverings corresponding to isomorphic group actions (see Example \ref{nonequivdiskcoveringexample} below). In this sense, disk coverings - despite their great generality - are not an ideal candidate for a generalized covering theory. We will often restrict $\mu$ to a subcategory $\mcd\subseteq \dcovx$. When we do this, we will still use the symbol $\mu$ to represent the restriction functor $\mcd\to G\mathbf{Set}$.

The subgroups $H\subseteq G$ which arise as the stabilizer subgroups of $G$-sets in the image of $\mu$ are precisely those for which there is a disk covering $p:(E,e)\to (X,x)$ with $p_{\ast}(\pi_1(E,e))=H$. Since some subgroups of $\pionex$ need not arise as such stabilizers, we give special attention to those that do.
\begin{definition}
A subgroup $H\subseteq\pionex$ is a \textit{disk-covering subgroup} if there is a disk covering $p:(E,e)\to (X,x)$ with $p_{\ast}(\pi_1(E,e))=H$.
\end{definition}
Another useful property not held by classical covering maps is the following 2-of-3 lemma. We leave the straightforward proof to the reader.
\begin{lemma}\label{twoofthreediskcovlemma}
Suppose $p:E\to X$ and $q:E'\to E$ are maps. If two of the maps $p,q,p\circ q$ are disk-coverings, then so is the third.
\end{lemma}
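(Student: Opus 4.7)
The plan is to prove each of the three implications of the 2-of-3 statement separately. In every case, the non-emptiness and path-connectedness of the candidate total space is routine: either it holds by hypothesis on one of the given disk-coverings, or (when proving $p$ is a disk-covering) it follows from the surjectivity of $q$ applied to the path-connected space $E'$. The real content is always the unique lifting of a map $(D^2,d)\to(\cdot,\cdot)$.

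For the direction where $p$ and $q$ are disk-coverings, I would lift a map $f:(D^2,d)\to(X,(p\circ q)(e'))$ first across $p$ to $\widehat{f}:(D^2,d)\to(E,q(e'))$ and then across $q$ to $\widetilde{f}:(D^2,d)\to(E',e')$; uniqueness of $\widetilde{f}$ then propagates through the two uniqueness assertions in succession. For the direction where $p$ and $p\circ q$ are disk-coverings, I would apply the lifting property of $p\circ q$ to $p\circ f$ (for a given $f:(D^2,d)\to(E,q(e'))$) to obtain $\widetilde{f}:(D^2,d)\to(E',e')$; the required equality $q\widetilde{f}=f$ then follows for free, since both sides are $p$-lifts of $p\circ f$ starting at $q(e')$, and uniqueness of $\widetilde{f}$ as a $q$-lift transfers from the uniqueness of $\widetilde{f}$ as a $(p\circ q)$-lift.

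The direction in which $q$ and $p\circ q$ are disk-coverings and $p$ must be shown a disk-covering is the step I expect to require the most care. For existence, given $f:(D^2,d)\to(X,p(e))$, I would use the surjectivity of $q$ to choose $e'\in q^{-1}(e)$, lift $f$ via $p\circ q$ to $\widetilde{f}:(D^2,d)\to(E',e')$, and set $\widehat{f}=q\circ\widetilde{f}$. For uniqueness, given another $p$-lift $g:(D^2,d)\to(E,e)$ of $f$, I would exploit that $q$ is itself a disk-covering to lift $g$ along $q$ to $\widetilde{g}:(D^2,d)\to(E',e')$; then $\widetilde{g}$ and $\widetilde{f}$ are both $(p\circ q)$-lifts of $f$ based at $e'$, hence equal, yielding $g=q\widetilde{g}=q\widetilde{f}=\widehat{f}$. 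The subtlety is that one cannot directly compare $g$ and $\widehat{f}$ by invoking uniqueness for $q$, since they are not presented as $q$-lifts of a common map; the comparison must be routed upward through $E'$ and effected by the uniqueness property of $p\circ q$.
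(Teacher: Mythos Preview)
Your argument is correct in all three cases, including the more delicate third direction where you correctly route the uniqueness comparison through $E'$ via the lifting property of $q$ and then invoke uniqueness for $p\circ q$. The paper itself does not supply a proof of this lemma---it is explicitly left to the reader as straightforward---so there is nothing to compare against; your approach is the natural one and matches what the authors evidently had in mind.

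One minor remark on the preamble: when showing $p$ is a disk-covering, you derive the path-connectedness of $E$ from the surjectivity of $q$ onto the image of the path-connected space $E'$. Strictly speaking, the surjectivity of a disk-covering is deduced in the paper from path-connectedness of the \emph{codomain}, so this reasoning is slightly circular as stated. In the paper's setting this is a non-issue, since all spaces under consideration lie in $\topz$ and are therefore path-connected by standing hypothesis; you may simply invoke that convention directly rather than appealing to surjectivity of $q$.
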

\subsection{$\mcc$-coverings and their properties}
The following definition is based on the definition of generalized covering in \cite{FZ07} but, in the spirit of \cite{Dydak11}, allows for a wider range of possible lifting criteria.
\begin{definition}\label{ccovdef}
Let $\mcc\subseteq \topz$ be a full subcategory of non-empty, path-connected spaces having the unit disk $D^2$ as an object. A $\mcc$\textit{-covering map} is a map $p:\tX\to X$ such that
\begin{enumerate}
\item $\tX\in \mcc$,
\item For every space $Y\in\mcc$, point $\tx\in\tX$, and based map $f:(Y,y)\to(X,p(\tx))$ such that $f_{\ast}(\pi_1(Y,y))\subseteq p_{\ast}(\pi_1(\tX,\tx))$, there is a unique map $\wt{f}:(Y,y)\to (\tX,\tx)$ such that $p\circ\wt{f}=f$.
\end{enumerate}
We call $p$ a \textit{universal} $\mcc$\textit{-covering} if $\tX$ is simply connected. Furthermore, we call $p$ a \textit{weak }$\mcc$\textit{-covering map} if $p$ only satisfies condition 2.
\end{definition}
\begin{remark}
The second condition in the definition of $\mcc$-covering is reminiscent of the unique lifting criterion used in classical covering space theory. Since $D^2$ is an object of $\mcc$ and $\pi_1(D^2,d)=1$, it is clear that every weak $\mcc$-covering is a disk-covering and is therefore surjective. In general, the lift $\tilde{f}:(Y,y_0)\to (\tX,\tx_0)$ in condition (2) can be described as follows: Let $y\in Y$ and $\gamma:\ui\to Y$ be any path from $y_0$ to $y$. Then $\tilde{f}(y)$ is the endpoint of the unique lift $\wt{f\circ\gamma}_{\tx_0}:\ui\to\tX$ starting at $\tx_0$.\\
\indent If $\mcc$ is a category of simply connected spaces, the notion of weak $\mcc$-covering agrees with the maps of study in \cite{Dydak11}. For instance, if $\mcd$ is the category whose only object is $D^2$, a weak $\mcd$-covering is precisely a disk-covering. In general, the ``weak" coverings are not unique up to homeomorphism and, as we will show, can always be replaced by some category of genuine $\mcc$-coverings without losing information about $\pionex$.
\end{remark}
Since we always assume $D^2$ is an object of $\mcc$, we have the following implications for a given map $p:\tX\to X$:
\[\txt{$p$ is a $\mcc$-covering $\Rightarrow p$ is a weak $\mcc$-covering $\Rightarrow p$ is a disk-covering}\]

Let $\ccov$ denote the category of $\mcc$-coverings and $\ccovx$ denote the category of $\mcc$-coverings over $X$; we view these as full subcategories of $\mathbf{DCov}$ and $\dcovx$ respectively. Since every $\mcc$-covering $p:\tX\to X$ is a disk-covering, we may apply the monodromy functor $\mu$ to $p$ to obtain the corresponding group action of $G=\pionex$ on the fiber $p^{-1}(x_0)$. The following embedding theorem illustrates that $\mcc$-coverings, unlike disk-coverings, are characterized up to isomorphism by this group action.

\begin{theorem}\label{fullyfaithfulthm}
The functor $\mu:\ccovx\to G\mathbf{Set}$ is fully faithful.
\end{theorem}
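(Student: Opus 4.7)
The plan is to deduce faithfulness from the previous lemma and to prove fullness by invoking the defining universal property of $\mcc$-coverings, which was unavailable in the disk-covering setting.

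Faithfulness is immediate: $\ccovx$ is a full subcategory of $\dcovx$, so Lemma \ref{faithfulpropdiskcov} gives faithfulness of the restricted functor $\mu:\ccovx\to G\mathbf{Set}$ for free.

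For fullness, I would start with two $\mcc$-coverings $p:\tX\to X$ and $q:\tY\to X$ together with a $G$-equivariant function $\phi:p^{-1}(x_0)\to q^{-1}(x_0)$, and I need to produce a map $f:\tX\to\tY$ over $X$ with $\mu(f)=\phi$. Fix a basepoint $\tx_0\in p^{-1}(x_0)$ and set $\ty_0=\phi(\tx_0)$. The key preparatory observation is that the stabilizer of $\tx_0$ under the monodromy action is exactly $H:=p_{\ast}(\pi_1(\tX,\tx_0))$, and similarly for $\ty_0$ it is $K:=q_{\ast}(\pi_1(\tY,\ty_0))$. Since $\phi$ is $G$-equivariant it carries stabilizers into stabilizers, so $H\subseteq K$. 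Because $\tX\in\mcc$, I may apply condition (2) of Definition \ref{ccovdef} to the $\mcc$-covering $q$ and the based map $p:(\tX,\tx_0)\to(X,x_0)$: the subgroup hypothesis $p_{\ast}(\pi_1(\tX,\tx_0))\subseteq q_{\ast}(\pi_1(\tY,\ty_0))$ is precisely $H\subseteq K$, so there exists a unique map $f:(\tX,\tx_0)\to(\tY,\ty_0)$ with $q\circ f=p$.

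It remains to check that $\mu(f)=\phi$. Let $\tx\in p^{-1}(x_0)$ be arbitrary; since $\tX$ is path-connected, choose a path $\gamma:\ui\to\tX$ from $\tx_0$ to $\tx$. Then $\alpha:=p\circ\gamma$ is a loop at $x_0$, and by the definition of monodromy the element $g=[\alpha]\in G$ satisfies $g\cdot\tx_0=\tx$. Now $f\circ\gamma$ is a path in $\tY$ starting at $\ty_0$ whose projection under $q$ is $\alpha$, so it is the unique lift of $\alpha$ at $\ty_0$, which means $f(\tx)=(f\circ\gamma)(1)=g\cdot\ty_0=g\cdot\phi(\tx_0)=\phi(g\cdot\tx_0)=\phi(\tx)$, using $G$-equivariance of $\phi$ in the penultimate step. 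Thus $\mu(f)=\phi$, completing fullness.

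The only point that requires genuine content beyond the previous lemma is the invocation of Definition \ref{ccovdef}(2), and the main conceptual step — not really an obstacle, but the heart of the argument — is translating $G$-equivariance of $\phi$ into the subgroup inclusion $H\subseteq K$ required to feed that universal property. Everything else is bookkeeping with unique path lifting, which is available since weak $\mcc$-coverings are disk-coverings.
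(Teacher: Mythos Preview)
Your proof is correct and follows essentially the same route as the paper: both derive faithfulness from Lemma~\ref{faithfulpropdiskcov}, establish the subgroup inclusion $p_{\ast}(\pi_1(\tX,\tx_0))\subseteq q_{\ast}(\pi_1(\tY,\ty_0))$ from $G$-equivariance (you phrase it as ``equivariant maps preserve stabilizers,'' the paper traces a loop explicitly), invoke the lifting property of Definition~\ref{ccovdef}(2) using $\tX\in\mcc$, and verify $\mu(f)=\phi$ via unique path lifting. The stabilizer formulation is a clean way to package the same computation.
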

\begin{proof}
Since $\mu:\dcovx\to G\mathbf{Set}$ is faithful by Lemma \ref{faithfulpropdiskcov}, the restriction is also faithful. To check that $\mu$ is full, suppose $p:E\to X$ and $q:E'\to X$ are $\mcc$-coverings and that $f:p^{-1}(x_0)\to q^{-1}(x_0)$ is a $G$-equivariant function. Being $G$-equivariant means that $f$ satisfies the equation $f(\widetilde{\alpha}_{e}(1))=\widetilde{\alpha}_{f(e)}(1)$ for every loop $\alpha\in\Omega(X,x_0)$ and point $e\in p^{-1}(x_0)$. Fix $e_0\in p^{-1}(x_0)$, let $e_{0}'=f(e_0)$, and consider a loop $\beta\in\Omega(E,e_0)$. We have
$$e_{0}'=f(e_0)=f\left(\wt{\alpha}_{e_0}(1)\right)=\wt{p\circ\beta}_{e_0}(1)$$which implies that $p\circ\beta$ lifts to a loop in $E'$ based at $f(e_0)=e_{0}'\in q^{-1}(x_0)$. Thus $p_{\ast}([\beta])\in q_{\ast}(\pi_1((E',e_{0}')))$. Since $p_{\ast}(\pi_1(E,e_0))\subseteq q_{\ast}(\pi_1((E',e_{0}')))$, there is a unique morphism $\wt{p}:(E,e_0)\to (E',e_{0}')$ such that $q\circ \wt{p}=p$. It suffices to check that $f$ is the restriction of $\wt{p}$ to $p^{-1}(x_0)$. Let $e\in p^{-1}(x_0)$ and $\gamma:\ui\to E$ be a path from $e_0$ to $e$. If $\alpha=p\circ\gamma$, then $$f(e)=f(\wt{\alpha}_{e_0}(1))=\wt{\alpha}_{e_{0}'}(1)=\wt{p}(e).$$
\end{proof}
The functor $\mu$ in Theorem \ref{fullyfaithfulthm} is not necessarily an equivalence of categories since subgroups $H\subseteq \pionex$ exist for which there may be no $\mcc$-covering $p:(\tX,\tx_0)\to (X,x_0)$ such that $H=p_{\ast}(\pi_1(\tX,\tx_0))$: see Examples \ref{failureexample} and \ref{failureexample2}.
\begin{definition}
A subgroup $H\subseteq \pionex$ is a $\mcc$\textit{-covering subgroup} if there exists a $\mcc$-covering map $p:(\tX,\tx)\to (X,x_0)$ such that $p_{\ast}(\pi_1(\tX,\tx))=H$.
\end{definition}
\begin{remark}\label{conjugateremark}
By changing the basepoint of the path-connected space $\tX$ in the fiber $p^{-1}(x_0)$, it is clear that whenever $H$ is a $\mcc$-covering subgroup, every conjugate of $H$ is also a $\mcc$-covering subgroup.
\end{remark}
The following proposition is straightforward to verify based on arguments from traditional covering space theory. In particular, it confirms that $\ccov$ is closed under the operation of composition, a property not generally held by covering maps in the classical sense.
\begin{proposition}\label{twoofthreeprop}
Suppose $p:\tX\to X$ and $q:\tY\to\tX$ are maps.
\begin{enumerate}
\item If $p$ and $q$ are $\mcc$-coverings, then so is $p\circ q$,
\item If $p$ and $p\circ q$ are $\mcc$-coverings, then so is $q$.
\end{enumerate}
\end{proposition}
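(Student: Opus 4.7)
The plan is to verify each of the two conditions in Definition \ref{ccovdef} directly by composing, respectively decomposing, lifts, using in an essential way that $p_\ast$ is injective on fundamental groups (as noted after Definition \ref{diskcoveringdef}).

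For part (1), I first observe that $\tY\in\mcc$ because $q$ is assumed to be a $\mcc$-covering. Then, given $\ty\in\tY$, $Y\in\mcc$, and a based map $f:(Y,y)\to(X,(p\circ q)(\ty))$ with $f_\ast(\pi_1(Y,y))\subseteq (p\circ q)_\ast(\pi_1(\tY,\ty))$, I factor the required lift in two stages. The right-hand side is contained in $p_\ast(\pi_1(\tX,q(\ty)))$, so the $\mcc$-covering property of $p$ produces a unique $\wt{f}:(Y,y)\to(\tX,q(\ty))$ with $p\circ\wt{f}=f$. The step that needs the most care is checking the subgroup hypothesis $\wt{f}_\ast(\pi_1(Y,y))\subseteq q_\ast(\pi_1(\tY,\ty))$ needed in order to lift $\wt{f}$ through $q$: pushing this desired inclusion forward by $p_\ast$ yields $f_\ast(\pi_1(Y,y))\subseteq (p\circ q)_\ast(\pi_1(\tY,\ty))$, which is exactly the standing hypothesis, and injectivity of $p_\ast$ on $\pi_1(\tX,q(\ty))$ lets me reverse this step. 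The $\mcc$-covering property of $q$ then yields a unique $g:(Y,y)\to(\tY,\ty)$ with $q\circ g=\wt{f}$, hence $(p\circ q)\circ g=f$; uniqueness of $g$ as a lift through $p\circ q$ follows by applying the uniqueness clauses for $p$ and for $q$ in succession.

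For part (2), $\tY\in\mcc$ follows directly from $p\circ q$ being a $\mcc$-covering. Given $\ty\in\tY$, $Y\in\mcc$, and $f:(Y,y)\to(\tX,q(\ty))$ with $f_\ast(\pi_1(Y,y))\subseteq q_\ast(\pi_1(\tY,\ty))$, I push forward by $p$: the composite $p\circ f$ automatically satisfies $(p\circ f)_\ast(\pi_1(Y,y))\subseteq (p\circ q)_\ast(\pi_1(\tY,\ty))$, so the $\mcc$-covering property of $p\circ q$ produces a unique $g:(Y,y)\to(\tY,\ty)$ with $(p\circ q)\circ g=p\circ f$. Then $q\circ g$ and $f$ are both based maps $(Y,y)\to(\tX,q(\ty))$ whose composition with $p$ equals $p\circ f$, and the subgroup hypothesis required to invoke the uniqueness clause for the $\mcc$-covering $p$ on $p\circ f$ is automatic. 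Hence $q\circ g=f$, and uniqueness of $g$ as a lift through $q$ follows from uniqueness of $g$ as a lift through $p\circ q$.

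The only nontrivial obstacle is the subgroup-condition step in part (1); everything else is a routine application of the universal property in Definition \ref{ccovdef} together with the injectivity of $p_\ast$ on fundamental groups.
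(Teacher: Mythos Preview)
Your argument is correct and is precisely the standard two-stage lifting argument the paper has in mind when it says the proposition ``is straightforward to verify based on arguments from traditional covering space theory'' and omits the proof. In particular, your use of the injectivity of $p_\ast$ (guaranteed since every $\mcc$-covering is a disk-covering) to pull back the subgroup inclusion in part~(1), and your use of the uniqueness clause for $p$ to conclude $q\circ g=f$ in part~(2), are exactly the expected steps.
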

\subsection{Coreflections}
Throughout this section, we assume $\mcc$ is a category having $D^2$ as an object. We typically want $\mcc$ to have ``enough" objects to provide an interesting theory. Certainly, if the only object of $\mcc$ is $D^2$, then the category $\ccov$ is rather uninteresting; we'd even have $\ccov([0,1])=\emptyset$. In this section, we will show that replacing any $\mcc$ with its coreflective hull in $\topz$ provides a richer theory of coverings without sacrificing any monodromy data.
\begin{definition}
The \textit{coreflective hull} of $\mcc$ in $\topz$ is the full subcategory $\hullc$ of $\topz$ consisting of all path-connected spaces which are the quotient of a topological sum of objects of $\mcc$.
\end{definition}
Certainly $\hullc$ is closed under quotients and $\mcc\subseteq\hullc$.
\begin{example}
Well-known examples of coreflective hulls taken within $\topz$ include the following:
\begin{enumerate}
\item If $\mcd$ is the category whose only object is $D^2$, then $\hulld$ is the category $\Delta\topz$ of so-called $\Delta$-generated spaces\footnote{The term ``$\Delta$-generated" comes from the fact that $\Delta\topz$ is the coreflective hull of the category consisting of the standard n-simplices $\Delta_n$.} The category of $\Delta$-generated spaces has been used in directed topology \cite{FRconvenientfordirectedhom} and diffeology \cite{CSWdtopology}. We treat the $\Delta$-generated category in more detail in Section \ref{deltacoveringsection}.
\item The category $\lpcz$ of path-connected, locally path-connected spaces is it's own coreflection since the quotient of every locally path-connected space is locally path-connected. More practically, $\lpcz$ is the coreflective hull of the category of directed arc-hedgehogs: see Lemma \ref{hedgehogislpc} below.
\item The category $\topz$ is itself the coreflective hull of the category $\mathbf{Top_1}$ of all simply connected spaces. The proof is a nice exercise and is left to the reader.
\end{enumerate}
\end{example}
\begin{proposition}
If $D^2$ is an object of $\mcc$, then $\Delta\topz\subseteq\hullc$. In particular, $\ui$ is an object of $\hullc$.
\end{proposition}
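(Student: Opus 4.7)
The plan is to unpack the definition of the coreflective hull and leverage the hypothesis $D^2 \in \mcc$ to obtain the inclusion directly. Recall that $\Delta\topz = \hulld$ where $\mcd$ is the category whose only object is $D^2$. Thus every $Y \in \Delta\topz$ is, by definition, a path-connected space that admits a quotient map $q : \bigsqcup_{i \in I} D^2 \to Y$ from a topological sum of copies of the unit disk. Since $D^2 \in \mcc$ by assumption, this same sum is also a topological sum of objects of $\mcc$, and hence $Y$ is a path-connected quotient of a topological sum of objects of $\mcc$. This is precisely the defining property of membership in $\hullc$, so $\Delta\topz \subseteq \hullc$ follows immediately.

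For the second assertion, it suffices to show $\ui \in \Delta\topz$, since we then apply the inclusion just established. The plan is to exhibit $\ui$ as a quotient of $D^2$. Take the composite $r : D^2 \to \ui$, $(x,y) \mapsto (x+1)/2$, which is continuous. The map $s : \ui \to D^2$, $t \mapsto (2t-1,0)$, is a continuous section of $r$, and a continuous surjection admitting a continuous section is always a quotient map. Hence $\ui$ is a path-connected quotient of a single copy of $D^2$, so $\ui \in \hulld = \Delta\topz \subseteq \hullc$.

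There is no genuine obstacle here: the argument is a straightforward unwinding of the definition of the coreflective hull together with the elementary observation that any map with a continuous section is a quotient map. The only point worth noting is the transitivity implicit in the argument, namely that if $\mcd \subseteq \mcc$, then $\hulld \subseteq \hullc$, which holds because the ambient collection of permissible ``generators'' only grows when we enlarge $\mcc$.
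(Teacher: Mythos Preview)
Your proof is correct. The paper actually states this proposition without proof, treating it as immediate from the definitions; your argument is precisely the natural unwinding one would supply, using the monotonicity $\mcd\subseteq\mcc\Rightarrow\hulld\subseteq\hullc$ together with the fact that $\ui$ is a retract (hence a quotient) of $D^2$.
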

The category $\hullc$ is coreflective in the sense that the inclusion functor $\hullc\to\topz$ has a right adjoint $c:\topz\to\hullc$ where $c(X)$ has the quotient (i.e. final) topology with respect to all maps $g:Y\to X$ with $Y\in\mcc$. A set $U\subseteq X$ is $\mcc$\textit{-open in} $X$ if for every map $f:Z\to X$ where $Z\in\mcc$, $f^{-1}(U)$ is open in $Z$. A set $U$ is open in $c(X)$ if and only if $U$ is $\mcc$-open in $X$. The fact that $c:\topz\to\hullc$ is right adjoint is equivalent to the following more practical formulation.
\begin{proposition}\label{coreflectionbasicprop}
The identity function $id:c(X)\to X$ is continuous. Moreover, if $g:Y\to X$ is continuous where $Y\in\hullc$, then $g:Y\to c(X)$ is also continuous.
\end{proposition}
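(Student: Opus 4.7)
The plan is to unwind the definitions of the coreflective hull and the topology on $c(X)$ carefully; once that is done, both statements follow by routine checking. Recall that $U\subseteq X$ is open in $c(X)$ precisely when $U$ is $\mcc$-open in $X$, meaning $f^{-1}(U)$ is open in $Z$ for every continuous $f:Z\to X$ with $Z\in\mcc$.

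For the first statement, I would take an arbitrary open set $U\subseteq X$ and verify it is $\mcc$-open: for any continuous $f:Z\to X$ with $Z\in\mcc$, continuity of $f$ immediately gives $f^{-1}(U)$ open in $Z$. Hence every open set of $X$ is open in $c(X)$, which says exactly that $id:c(X)\to X$ is continuous.

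For the second statement, since $Y\in\hullc$, by the definition of the coreflective hull there is an indexed family $\{Z_i\}_{i\in I}$ of objects in $\mcc$ together with a quotient map $q:\bigsqcup_{i\in I}Z_i\to Y$; let $\iota_i:Z_i\hookrightarrow \bigsqcup_{i\in I}Z_i$ denote the canonical inclusion. To show $g:Y\to c(X)$ is continuous, take $U$ open in $c(X)$. By the universal property of quotient and sum topologies, $g^{-1}(U)$ is open in $Y$ if and only if $(g\circ q\circ \iota_i)^{-1}(U)$ is open in $Z_i$ for every $i\in I$. Each composition $g\circ q\circ \iota_i:Z_i\to X$ is continuous (as a composite of continuous maps, using the hypothesis that $g:Y\to X$ is continuous), and $Z_i\in\mcc$, so $\mcc$-openness of $U$ in $X$ gives precisely that $(g\circ q\circ \iota_i)^{-1}(U)$ is open in $Z_i$. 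Thus $g^{-1}(U)$ is open in $Y$, as required.

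There is no real obstacle here; the only mild subtlety is recognizing that membership in $\hullc$ gives one the quotient presentation needed to reduce the continuity check to tests against objects of $\mcc$, which is exactly what the $\mcc$-open condition controls. The proposition is really just the standard statement that the right adjoint to an inclusion of a coreflective subcategory exists, rephrased point-set-topologically.
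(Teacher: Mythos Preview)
Your proof is correct and is exactly the routine verification the paper has in mind; the paper itself states the proposition without proof, treating it as the standard point-set reformulation of the fact that $c$ is right adjoint to the inclusion $\hullc\hookrightarrow\topz$. There is nothing to add.
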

Since $\ui$ and $D^2$ are objects of $\hullc$, every path and homotopy of paths in $X$ is also continuous with respect to the topology of $c(X)$. Thus the continuous identity $id:c(X)\to X$ is a disk-covering, which induces an isomorphism $\pi_1(c(X),x)\to\pi_1(X,x)$ on fundamental groups for every $x\in X$.
\begin{corollary}
For every $X\in\topz$, the identity function $id:c(X)\to X$ is an $\hullc$-covering.
\end{corollary}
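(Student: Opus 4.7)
The plan is to simply unpack the definition of $\hullc$-covering for the map $id:c(X)\to X$ and observe that every required ingredient has already been supplied by the preceding discussion. First, $c(X)$ is an object of $\hullc$ by the construction of the coreflection, so condition (1) of Definition \ref{ccovdef} is immediate. Moreover, the paragraph preceding the corollary already notes that $id:c(X)\to X$ is a disk-covering inducing an isomorphism $id_{\ast}:\pi_1(c(X),x)\to\pi_1(X,x)$, so $id_{\ast}(\pi_1(c(X),x))$ is the full group $\pi_1(X,x)$ and the subgroup hypothesis in condition (2) is automatically satisfied for any $f$.

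The heart of the argument is therefore the unique-lifting condition (2). Given $Y\in\hullc$, a basepoint $\tx\in c(X)$, and a based map $f:(Y,y)\to(X,id(\tx))$, I would produce the lift $\wt{f}$ simply by viewing $f$ as a set map into $c(X)$. Continuity of this candidate $\wt{f}:Y\to c(X)$ follows immediately from the second assertion of Proposition \ref{coreflectionbasicprop}: any continuous map from an object of $\hullc$ into $X$ is automatically continuous into $c(X)$. The equation $id\circ\wt{f}=f$ holds by construction. Uniqueness of $\wt{f}$ is essentially formal, because $id:c(X)\to X$ is a bijection on underlying sets; any two set-theoretic solutions of $id\circ\wt{f}=f$ must coincide.

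There is no real obstacle here; the statement is a corollary in the sincere sense, essentially repackaging the universal property of the coreflection together with the observation that $id:c(X)\to X$ has the disk-covering property. The only item worth taking care with is ensuring the subgroup condition in Definition \ref{ccovdef}(2) has been verified, which is why I would explicitly invoke the $\pi_1$-isomorphism recorded in the paragraph just above the corollary.
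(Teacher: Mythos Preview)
Your proposal is correct and follows essentially the same argument as the paper: verify $c(X)\in\hullc$, observe that the subgroup condition is vacuous because $id_\ast$ is an isomorphism, and obtain the lift by invoking Proposition~\ref{coreflectionbasicprop} to see that $f:Y\to c(X)$ is already continuous. Your treatment is slightly more explicit about uniqueness (via bijectivity of $id$), which the paper leaves implicit.
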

\begin{proof}
By construction, $c(X)\in\hullc$. Suppose $Y\in \hullc$ and $f:(Y,y)\to (X,x)$ is a based map such that $f_{\ast}(\pi_1(Y,y))\subseteq (id)_{\ast}(\pi_1(c(X),x))=\pi_1(X,x)$. Since $Y\in\hullc$, $f:Y\to c(X)$ is also continuous and certainly satisfies $f\circ id=f$.
\end{proof}
The next lemma tells us that every weak $\mcc$-covering induces a $\hullc$-covering which retains identical monodromy data of $\pionex$.
\begin{lemma}\label{ccovtohullccovlemma}
If $p:\tX\to X$ is a weak $\mcc$-covering, then $\psi(p):c(\tX)\to X$ is a $\hullc$-covering. Moreover, the morphism\[\xymatrix{
c(\tX) \ar[dr]_-{\psi(p)} \ar[rr]^-{id} && \tX \ar[dl]^-{p} \\ & X
}\]of disk-coverings over $X$ induces an isomorphism $\mu(\psi(p):c(\tX)\to X)\cong\mu(p:\tX\to X)$ of $G$-sets.
\end{lemma}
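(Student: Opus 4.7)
The plan is to verify directly that $\psi(p)=p\circ id:c(\tX)\to X$ satisfies both conditions of Definition~\ref{ccovdef} for the category $\hullc$. The first condition is immediate: $c(\tX)\in\hullc$ by construction, and $\psi(p)$ is continuous as a composition of continuous maps. Note also that $\psi(p)$ has unique path lifting since $p$ does (paths in $c(\tX)$ and in $\tX$ are the same, because $\ui\in\hullc$ and the identity $c(\tX)\to\tX$ is a bijection that is continuous with continuous inverse on path spaces). By the corollary preceding the lemma, $(id)_{\ast}:\pi_1(c(\tX),\tx)\to\pi_1(\tX,\tx)$ is an isomorphism, so $\psi(p)_{\ast}(\pi_1(c(\tX),\tx))=p_{\ast}(\pi_1(\tX,\tx))$.

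The main work goes into condition (2), which requires promoting the lifting property of $p$ from objects of $\mcc$ to objects of $\hullc$. Given $Y\in\hullc$ and a based map $f:(Y,y)\to(X,\psi(p)(\tx))$ with $f_{\ast}(\pi_1(Y,y))\subseteq p_{\ast}(\pi_1(\tX,\tx))$, I will first define a candidate lift $\wt{f}:Y\to\tX$ setwise by the standard recipe: for $y'\in Y$, pick any path $\gamma:\ui\to Y$ from $y$ to $y'$, lift $f\circ\gamma$ to the unique path $\widetilde{f\circ\gamma}_{\tx}$ in $\tX$ starting at $\tx$, and set $\wt{f}(y')$ to be its endpoint. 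Well-definedness of $\wt{f}$ as a function, and the change-of-basepoint fact that $f_{\ast}(\pi_1(Y,y'))\subseteq p_{\ast}(\pi_1(\tX,\wt{f}(y')))$ for each $y'\in Y$, are the standard monodromy arguments using the hypothesis together with unique path lifting in $p$.

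The key step is showing continuity of $\wt{f}:Y\to\tX$. Since $Y\in\hullc$, there is a quotient map $q:\bigsqcup_i Z_i\to Y$ with each $Z_i\in\mcc$ path-connected. Fix $i$, pick $z_i\in Z_i$, and set $y'=q(z_i)$. The map $g_i=q\circ\iota_i:Z_i\to Y$ is continuous and, by the change-of-basepoint observation, $(f\circ g_i)_{\ast}(\pi_1(Z_i,z_i))\subseteq f_{\ast}(\pi_1(Y,y'))\subseteq p_{\ast}(\pi_1(\tX,\wt{f}(y')))$. Since $p$ is a weak $\mcc$-covering, there is a unique continuous $h_i:(Z_i,z_i)\to(\tX,\wt{f}(y'))$ with $p\circ h_i=f\circ g_i$. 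A short path-lifting argument, concatenating $\gamma$ with $g_i\circ\delta$ for any path $\delta$ in $Z_i$ from $z_i$ to a chosen point, shows $\wt{f}\circ g_i=h_i$ pointwise, so $\wt{f}\circ g_i$ is continuous. Since this holds for every $i$, $\wt{f}\circ q$ is continuous, and because $q$ is a quotient map, $\wt{f}:Y\to\tX$ is continuous. Then Proposition~\ref{coreflectionbasicprop}, applied to $Y\in\hullc$, upgrades this to continuity of $\wt{f}:Y\to c(\tX)$. Finally $\psi(p)\circ\wt{f}=p\circ\wt{f}=f$ by construction, and uniqueness of $\wt{f}$ follows from unique path lifting of $\psi(p)$.

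For the monodromy assertion, observe that $c(\tX)$ and $\tX$ share the same underlying set and that $\psi(p)$ agrees with $p$ setwise, so the identity restricts to a bijection $\psi(p)^{-1}(x_0)\to p^{-1}(x_0)$. This bijection is $G$-equivariant because the monodromy action on either side is computed via path lifting, and paths in $c(\tX)$ and $\tX$ coincide. Hence it is an isomorphism of $G$-sets, which is precisely the morphism induced by applying $\mu$ to the displayed triangle. The main obstacle in the argument is the continuity step for $\wt{f}$, where the interplay between the pointwise definition via path lifting and the coreflective topology on $c(\tX)$ must be handled carefully by testing against maps from $\mcc$-objects.
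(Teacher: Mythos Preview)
Your proof is correct and follows essentially the same strategy as the paper: define the candidate lift $\wt{f}$ via unique path lifting, then verify continuity by testing against maps from objects of $\mcc$ using the weak $\mcc$-covering hypothesis at a suitably translated basepoint. The only organizational differences are that (i) you invoke the quotient description of $Y\in\hullc$ (a fixed surjection $q:\bigsqcup_i Z_i\to Y$) whereas the paper uses the equivalent final-topology description (checking $g^{-1}(\wt{f}^{-1}(U))$ is open for \emph{every} map $g:Z\to Y$ with $Z\in\mcc$), and (ii) you first establish continuity of $\wt{f}:Y\to\tX$ and then upgrade to $c(\tX)$ via Proposition~\ref{coreflectionbasicprop}, while the paper verifies continuity into $c(\tX)$ directly. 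These are cosmetic rearrangements of the same argument; neither buys anything substantive over the other.
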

\begin{proof}
First, we check that $\psi(p)$ is a $\hullc$-covering. By construction, $c(\tX)\in\hullc$. We check that $\psi(p)$ is a weak $\hullc$-covering. Suppose $\tx\in\tX$, $p(\tx)=x$, and $f:(Y,y)\to (X,x)$ is a based map where $Y\in\hullc$ and $f_{\ast}(\pi_1(Y,y))\subseteq (\psi(p))_{\ast}(\pi_1(c(\tX),\tx))$. Since the continuous identity function $c(\tX)\to \tX$ induces an isomorphism of fundamental groups, we have $(\psi(p))_{\ast}(\pi_1(c(\tX),\tx))=p_{\ast}(\pi_1(\tX,\tx))$.

Define a function $\wt{f}:(Y,y)\to(\tX,\tx)$ as follows: for $z\in Y$, pick a path $\gamma:\ui\to Y$ from $y$ to $z$. If $\wt{f\circ\gamma}:\ui\to\tX$ is the unique lift such that $p\circ \wt{f\circ\gamma}=f\circ \gamma$ and $\wt{f\circ\gamma}(0)=\tx$, we let $\wt{f}(z)=\wt{f\circ\gamma}(1)$. Since $f_{\ast}(\pi_1(Y,y))\subseteq p_{\ast}(\pi_1(\tX,\tx))$ and $p$ is a disk-covering, it is clear that $\wt{f}$ is a well-defined function and is unique. It suffices to show $\wt{f}:Y\to c(\tX)$ is continuous with respect to the topology of $c(\tX)$. Suppose $U\subseteq c(\tX)$ is open. Since $Y\in\hullc$, we need to check that $g^{-1}(\wt{f}^{-1}(U))$ is open in $Z$ for every map $g:Z\to Y$ where $Z\in\mcc$. Suppose we have a point $z\in g^{-1}(\wt{f}^{-1}(U))$. Pick any path $\gamma$ from $y$ to $g(z)$ so that $\wt{f}(g(z))=\wt{f\circ\gamma}(1)=\tx '$. Note that
\begin{eqnarray*}
(f\circ g)_{\ast}(\pi_1(Z,z)) &\subseteq & f_{\ast}(\pi_1(Y,g(z)))\\
&=& [f\circ \gamma]^{-1}f_{\ast}(\pi_1(Y,y))[f\circ \gamma]\\
&\subseteq & [f\circ \gamma]^{-1}p_{\ast}(\pi_1(\tX,\tx))[f\circ \gamma]\\
&= & p_{\ast}\left(\left[\wt{f\circ \gamma}\right]^{-1}\pi_1(\tX,\tx)\left[\wt{f\circ \gamma}\right]\right)\\
&=& p_{\ast}(\pi_1(\tX,\tx '))
\end{eqnarray*}
Since $Z\in\mcc$ and $p:\tX\to X$ is a weak $\mcc$-covering, there is a unique map $k:(Z,z)\to(\tX,\tx ')$ such that $p\circ k=f\circ g$. Since $Z\in\hullc$, the function $k:Z\to c(\tX)$ is also continuous. Since $p\circ \wt{f}\circ g=f\circ g=p\circ k$ and $\wt{f}(g(z))=\tx '$, we have $\wt{f}\circ g=k$ by the uniqueness of the function $k$. Thus $g^{-1}(\wt{f}^{-1}(U))=k^{-1}(U)$ is open in $Z$.

The last statement of the lemma follows directly from the fact that $id:c(\tX)\to\tX$ is a disk-covering which induces an isomorphism of fundamental groups.
\end{proof}
\begin{corollary}
Every $\mcc$-covering subgroup of $\pionex$ is also a $\hullc$-covering subgroup of $\pionex$.
\end{corollary}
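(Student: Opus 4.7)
The plan is to deduce the corollary almost immediately from Lemma \ref{ccovtohullccovlemma}. Suppose $H \subseteq \pionex$ is a $\mcc$-covering subgroup. By definition, there is a $\mcc$-covering $p:(\tX,\tx)\to(X,x_0)$ with $p_{\ast}(\pi_1(\tX,\tx)) = H$. Since every $\mcc$-covering is in particular a weak $\mcc$-covering, Lemma \ref{ccovtohullccovlemma} applies and produces the $\hullc$-covering $\psi(p):c(\tX)\to X$, where $c(\tX)$ carries the coreflection topology.

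The remaining point is simply to verify that the stabilizer subgroup is preserved. Since $id:c(\tX)\to\tX$ is continuous and is itself a disk-covering (as noted in the paragraph preceding the corollary on $id:c(X)\to X$ being an $\hullc$-covering), it induces an isomorphism $\pi_1(c(\tX),\tx)\to\pi_1(\tX,\tx)$ on fundamental groups. Composing with $p_{\ast}$ yields
\[
(\psi(p))_{\ast}(\pi_1(c(\tX),\tx)) \;=\; p_{\ast}(\pi_1(\tX,\tx)) \;=\; H,
\]
so $H$ is realized as the stabilizer of an $\hullc$-covering. Thus $H$ is an $\hullc$-covering subgroup of $\pionex$.

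There is essentially no obstacle: the entire content of the corollary has been packaged into Lemma \ref{ccovtohullccovlemma}, and the only observation needed beyond invoking it is that passing from $\tX$ to $c(\tX)$ does not change the fundamental group, hence does not change the image subgroup in $\pionex$. The proof will therefore be just a couple of sentences, citing the lemma and noting the fundamental group isomorphism.
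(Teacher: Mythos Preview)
Your proposal is correct and is exactly the intended argument: the paper gives no proof for this corollary because it follows immediately from Lemma~\ref{ccovtohullccovlemma}, and you have spelled out precisely that deduction. The only observation needed beyond invoking the lemma---that $(\psi(p))_{\ast}(\pi_1(c(\tX),\tx))=p_{\ast}(\pi_1(\tX,\tx))=H$---is already established inside the proof of Lemma~\ref{ccovtohullccovlemma} itself.
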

\begin{example}\label{nonequivdiskcoveringexample}
The previous lemma indicates that if $X$ is path-connected but not an object of $\hullc$, then the identity functions $id:X\to X$ and $\psi(id):c(X)\to X$ are non-isomorphic disk-coverings (only the later of which is a $\hullc$-covering) which correspond to isomorphic $G$-sets under the functor $\mu$. For instance, when $\hullc=\lpcz$, a non-locally path-connected space $X$ provides an example.
\end{example}
\begin{corollary}\label{faithfulfunctorprop}
The coreflection $c:\topz\to\hullc$ induces a fully faithful functor $\psi:\ccovx\to\hccovx$ which is the identity on all underlying sets and functions. Moreover, the following diagram of functors commutes up to natural isomorphism.\[\xymatrix{
\ccovx \ar[dr]_-{\mu} \ar[rr]^-{\psi} && \hccovx \ar[dl]^-{\mu} \\ & G\mathbf{Set}
}\]
\end{corollary}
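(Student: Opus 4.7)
The plan is to define $\psi$ on objects by $\psi(p:\tX\to X) = (\psi(p):c(\tX)\to X)$, which is a $\hullc$-covering by Lemma~\ref{ccovtohullccovlemma}, and on morphisms by sending each continuous $f:\tX\to\tX'$ over $X$ to the same underlying set-function viewed as a map $c(\tX)\to c(\tX')$. The one point requiring care is continuity: since $id:c(\tX)\to\tX$ is continuous, the composition $c(\tX)\to\tX\xrightarrow{f}\tX'$ is continuous into $\tX'$, and because $c(\tX)\in\hullc$, Proposition~\ref{coreflectionbasicprop} promotes this to a continuous map $c(\tX)\to c(\tX')$. Functoriality is then immediate because $\psi$ leaves underlying functions untouched, which also takes care of the claim that $\psi$ is the identity on sets and functions.

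Next I would exhibit the natural isomorphism $\mu\circ\psi\cong\mu$. At each object $p:\tX\to X$ of $\ccovx$, the fibers $\psi(p)^{-1}(x_0)$ and $p^{-1}(x_0)$ are literally the same subset of $\tX$, and the $G$-actions on them coincide because path lifting in $c(\tX)$ and in $\tX$ is the same (the identity $c(\tX)\to\tX$ is a disk-covering inducing an isomorphism on $\pi_1$, as already observed before Lemma~\ref{ccovtohullccovlemma}; see also its last sentence). Taking the identity function on each fiber as the component of the natural transformation gives the required natural isomorphism; naturality with respect to a morphism $(f,id)$ is automatic since $\psi(f)$ and $f$ restrict to the same function on the fiber.

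To conclude that $\psi$ is fully faithful, I would invoke the following standard categorical observation: if $GF\cong G'$ naturally, $G'$ is fully faithful, and $G$ is faithful, then $F$ is fully faithful. Here I take $G=\mu:\hccovx\to G\mathbf{Set}$, $G'=\mu:\ccovx\to G\mathbf{Set}$, and $F=\psi$; both monodromy functors are fully faithful by Theorem~\ref{fullyfaithfulthm}, so the hypotheses are satisfied. Concretely, faithfulness of $\psi$ is built in since $\psi$ preserves underlying functions; for fullness, given $h:\psi(p)\to\psi(q)$ in $\hccovx$, the image $\mu(h)$ is a $G$-equivariant map of fibers, and fullness of $\mu$ on $\ccovx$ produces $f:p\to q$ in $\ccovx$ with $\mu(f)=\mu(h)$, whence $\mu(\psi(f))=\mu(h)$ and faithfulness of $\mu$ on $\hccovx$ forces $\psi(f)=h$.

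I do not anticipate a serious obstacle; the substantive topological content has already been extracted in Lemma~\ref{ccovtohullccovlemma}, and what remains is essentially a formal categorical unwinding. The only genuinely technical step is verifying continuity of $f:c(\tX)\to c(\tX')$, which is handled cleanly by the universal property of the coreflection rather than by any direct computation with $\mcc$-open sets.
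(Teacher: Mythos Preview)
Your proposal is correct and follows essentially the same approach as the paper: define $\psi$ via the coreflection using Lemma~\ref{ccovtohullccovlemma}, observe faithfulness is trivial since $\psi$ preserves underlying functions, build the natural isomorphism from the identity morphism $c(\tX)\to\tX$ of disk-coverings, and deduce fullness from the fact that both monodromy functors are fully faithful. You spell out a few points (continuity of $c(f)$ via Proposition~\ref{coreflectionbasicprop}, the abstract categorical lemma yielding fullness) that the paper leaves implicit, but the structure and key inputs are identical.
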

\begin{proof}
Lemma \ref{ccovtohullccovlemma} shows that a $\mcc$-covering $p:\tX\to X$ gives rise to the $\hullc$-covering $\psi(p):c(\tX)\to X$. If $q:\tY\to X$ is another $\mcc$-covering and $f:\tX\to\tY$ is a map such that $q\circ f=p$, then the coreflection $c(f):c(\tX)\to c(\tY)$ satisfies $\psi(q)\circ c(f)=\psi(p)$. Functorality follows easily from here. Since $c:\ccovx\to\hccovx$ is the identity functor on underlying sets and functions, it is obviously faithful.

The component of the natural isomorphism at a given $\mcc$-covering $p:\tX\to X$, is the isomorphism of $G$-sets $(\psi(p))^{-1}(x_0)\to p^{-1}(x_0)$ induced by the following morphism of disk-coverings over $X$.
\[\xymatrix{
c(\tX) \ar[dr]_-{\psi(p)} \ar[rr]^-{id} && \tX \ar[dl]^-{p} \\ & X
}\]Naturality is straightforward to check. Finally, $\psi$ is full since it is faithful and the monodromy functors are fully faithful.
\end{proof}
\begin{proposition}\label{twoofthreehullccovlemma}
Suppose the objects of $\mcc$ are simply connected and include the unit disk. Let $p:\tX\to X$ and $q:\tY\to \tX$ be maps.
\begin{enumerate}
\item If $p$ and $q$ are $\hullc$-coverings, then so is $p\circ q$,
\item If $p$ and $p\circ q$ are $\hullc$-coverings, then so is $q$,
\item If $q$ and $p\circ q$ are $\hullc$-coverings, then so is $\psi(p):c(\tX)\to X$.
\end{enumerate}
\end{proposition}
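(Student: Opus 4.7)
Parts (1) and (2) are instances of Proposition \ref{twoofthreeprop} applied with $\hullc$ in place of $\mcc$: since $\hullc$ is a full subcategory of $\topz$ of non-empty path-connected spaces containing $D^2$, it satisfies the standing hypotheses of Definition \ref{ccovdef}, so the proposition applies directly to the coverings $p,q,p\circ q$.

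Part (3) is the genuinely new case. I would not try to show $p:\tX\to X$ itself is an $\hullc$-covering---indeed, $\tX$ need not lie in $\hullc$. Instead, the plan is to verify that $p$ is a \emph{weak $\mcc$-covering} in the sense of Definition \ref{ccovdef} and then invoke Lemma \ref{ccovtohullccovlemma} to conclude that $\psi(p):c(\tX)\to X$ is an $\hullc$-covering.

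The first step is to check that $p$ inherits unique path lifting from $q$ and $p\circ q$. Given paths $\alpha_1,\alpha_2:\ui\to\tX$ with common initial point $\tx$ and $p\circ\alpha_1=p\circ\alpha_2$, pick $\ty\in q^{-1}(\tx)$ and lift through the disk-covering $q$ to obtain $\widetilde{\alpha}_1,\widetilde{\alpha}_2:\ui\to\tY$; since $p\circ q$ is a disk-covering, $\widetilde{\alpha}_1=\widetilde{\alpha}_2$, whence $\alpha_1=\alpha_2$. Next, given $Y\in\mcc$, $\tx\in\tX$, and a based map $f:(Y,y)\to(X,p(\tx))$, the subgroup condition in Definition \ref{ccovdef}(2) is vacuous because $Y$ is simply connected. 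Choose $\ty\in q^{-1}(\tx)$; since $\mcc\subseteq\hullc$ and $p\circ q$ is an $\hullc$-covering, $f$ admits a unique based lift $g:(Y,y)\to(\tY,\ty)$ through $p\circ q$, and $\wt{f}:=q\circ g$ is the required lift through $p$. Uniqueness of $\wt{f}$ follows path-by-path from the unique path lifting of $p$ established above. Lemma \ref{ccovtohullccovlemma} then yields that $\psi(p):c(\tX)\to X$ is an $\hullc$-covering.

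The principal obstruction---and the reason the hypothesis that $\mcc$ consists of simply connected spaces is essential---is this: if one tried to verify the stronger condition that $p$ lifts maps from arbitrary $Y\in\hullc$, one would naturally attempt to lift $f$ through $p\circ q$ and then apply $q$, but this requires $f_\ast(\pi_1(Y,y))\subseteq(p\circ q)_\ast(\pi_1(\tY,\ty))$. In general $q_\ast(\pi_1(\tY,\ty))$ sits strictly inside $\pi_1(\tX,\tx)$, so the available containment $f_\ast(\pi_1(Y,y))\subseteq p_\ast(\pi_1(\tX,\tx))$ is insufficient. Restricting to $\mcc$-objects makes the subgroup condition trivial and sidesteps this compatibility issue, allowing the reduction to Lemma \ref{ccovtohullccovlemma} to go through.
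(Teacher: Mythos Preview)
Your proposal is correct and follows essentially the same route as the paper: parts (1) and (2) are deferred to Proposition~\ref{twoofthreeprop}, and for (3) you reduce to showing $p$ is a weak $\mcc$-covering and invoke Lemma~\ref{ccovtohullccovlemma}, lifting $f$ through $p\circ q$ via the simple connectivity of $Y\in\mcc$ and composing with $q$. The only cosmetic difference is that the paper obtains unique path lifting for $p$ by citing the 2-of-3 Lemma~\ref{twoofthreediskcovlemma} for disk-coverings, whereas you verify it by hand; both are fine.
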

\begin{proof}
(1) and (2) follow directly from Proposition \ref{twoofthreeprop}. For (3) suppose $q$ and $r$ are $\hullc$-coverings. By Lemma \ref{ccovtohullccovlemma}, it suffices to show that $p:\tX\to X$ is a weak $\mcc$-covering. Let $Y\in \mcc$ and $f:(Y,y)\to (X,x)$ be a map and $p(\tx)=x$. Since $q$ is surjective, we may pick a point $\ty\in q^{-1}(\tx)$. By Lemma \ref{twoofthreediskcovlemma}, $p$ is a disk-covering and thus has unique lifting with respect to path-connected spaces. Therefore, we only need to check that a based lift of $f$ to $(\tX,\tx)$ exists. Since $Y$ is simply connected, $f_{\ast}(\pi_1(Y,y))=1\subseteq (p\circ q)_{\ast}(\pi_1(\tY,\ty))$. Thus there is a unique map $\wt{f}_{\ty}:(Y,y)\to(\tY,\ty)$ such that $p\circ q\circ \wt{f}_{\ty}=f$. The map $q\circ \wt{f}_{\ty}:(Y,y)\to (\tX,\tx)$ is the desired lift.
\end{proof}
\begin{theorem}\label{reflectionequivlemma1}
For every $X\in\topz$, the continuous identity function $c(X)\to X$ induces an equivalence of categories $\hccovx\cong\hccovcx$
\end{theorem}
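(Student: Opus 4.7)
The plan is to exhibit strict inverse functors between $\hccovx$ and $\hccovcx$ that are essentially the identity on underlying sets, functions, and topologies of covering spaces. The two key ingredients are the Corollary immediately preceding, which shows that $id:c(X)\to X$ is itself an $\hullc$-covering, and the 2-of-3 principle of Proposition \ref{twoofthreeprop} applied with $\mcc$ replaced by $\hullc$.

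Going from $\hccovcx$ to $\hccovx$, I would define a functor $\Phi$ by postcomposition with $id:c(X)\to X$: on objects, $\Phi(q:\tY\to c(X))=(id\circ q:\tY\to X)$, which is an $\hullc$-covering by Proposition \ref{twoofthreeprop}(1); on morphisms $\Phi$ is the identity, since the triangle condition $p'\circ f=p$ involves only the underlying functions and is unaffected by changing the base from $c(X)$ to $X$. In the reverse direction, I would define $\Psi:\hccovx\to\hccovcx$ by refining the topology on the base. Given an object $p:\tX\to X$, Proposition \ref{coreflectionbasicprop} guarantees that the underlying function $p$ is continuous as a map $\tX\to c(X)$, because $\tX\in\hullc$. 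To see that $\Psi(p):\tX\to c(X)$ is an $\hullc$-covering, I would apply Proposition \ref{twoofthreeprop}(2) to the composable pair $\Psi(p):\tX\to c(X)$ and $id:c(X)\to X$: both $id$ and the composite $id\circ\Psi(p)=p:\tX\to X$ are $\hullc$-coverings, so the first factor $\Psi(p)$ must be as well. Again $\Psi$ acts as the identity on morphisms.

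Finally, I would observe that $\Phi\circ\Psi$ and $\Psi\circ\Phi$ coincide with the identity functor on both objects and morphisms — not merely up to natural isomorphism — because each functor preserves all underlying data and only relabels the topology of the base space. This produces a strict isomorphism of categories $\hccovx\cong\hccovcx$, which is a fortiori an equivalence. I do not anticipate any real obstacle: the entire argument amounts to a bookkeeping exercise verifying that the topology of $c(X)$ refines that of $X$ just enough to move between the two categories without disturbing the covering spaces themselves, and the only point of mild care is confirming that Proposition \ref{twoofthreeprop}, stated there for a generic category $\mcc$ containing the disk, applies verbatim to $\hullc$, which it does since $\hullc$ satisfies the same standing hypotheses.
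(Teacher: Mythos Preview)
Your proposal is correct and follows essentially the same route as the paper: your $\Phi$ and $\Psi$ are the paper's $G$ and $F$, both built from the continuous identity $c(X)\to X$ together with Proposition~\ref{twoofthreeprop}. If anything, your argument is slightly more explicit than the paper's, which only notes that $F(p):\tX\to c(X)$ is continuous by Proposition~\ref{coreflectionbasicprop} and leaves the verification that it is an $\hullc$-covering (your application of Proposition~\ref{twoofthreeprop}(2)) to the reader.
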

\begin{proof}
Define the functor $F:\hccovx\to\hccovcx$ simply by applying the coreflection $c$: If $p:\tX\to X$ is a $\hullc$-covering, then $F(p):\tX=c(\tX)\to c(X)$ is continuous by Proposition \ref{coreflectionbasicprop}. Given a morphism $f:\tX\to\tY$ of $\hullc$-coverings as in the diagram below, we have $\tX,\tY\in\hullc$ so $F$ is the identity on morphisms.
\[\xymatrix{ \tX \ar[rr]^-{f} \ar[dr]_-{p} && \tY \ar[dl]^-{q} & \tX \ar[rr]^-{f} \ar[dr]_{F(p)} && \tY \ar[dl]^-{F(q)}\\
& X & & & c(X) }\]
The inverse $G:\hccovcx\to\hccovx$ is defined as follows: If $p':\tX\to c(X)$ is a $\hullc$-covering, then the composition $p=G(p')=\tX\to c(X)\to X$ is a $\hullc$-covering by Proposition \ref{twoofthreeprop}. On morphisms, $G$ is the identity. A straightforward check shows that $F$ and $G$ are inverse equivalences.
\end{proof}
Theorem \ref{reflectionequivlemma1} implies that if one wishes to consider the $\hullc$-coverings of a space $X$, then for all practical purposes one may assume $X$ is an object of $\hullc$. We now consider subcategories $\mcc\subseteq\mcd$ of $\topz$. The fact that $\hullc\subset\hulld$ allows us to apply the coreflection functor $c:\topz\to\hullc$ to every object of $\hulld$.
\begin{corollary}
If $p:\tX\to X$ is a $\hulld$-covering, then $\psi(p):c(\tX)\to X$ is a $\hullc$-covering.
\end{corollary}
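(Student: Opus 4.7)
The plan is to reduce the claim immediately to Lemma \ref{ccovtohullccovlemma}, which already asserts that any weak $\mcc$-covering $p:\tX\to X$ gives rise to a $\hullc$-covering $\psi(p):c(\tX)\to X$. Thus the only thing I would need to verify is that a $\hulld$-covering is in particular a weak $\mcc$-covering in the sense required by that lemma.

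To do this, I would first observe the chain of inclusions $\mcc\subseteq\mcd\subseteq\hulld$, so every object $Y$ of $\mcc$ is automatically an object of $\hulld$. Since $p:\tX\to X$ is assumed to be a $\hulld$-covering, condition (2) of Definition \ref{ccovdef} holds for every $Y\in\hulld$; restricting this lifting criterion to the subclass $\mcc\subseteq\hulld$ gives exactly the unique lifting criterion against objects of $\mcc$, which is the defining property of a weak $\mcc$-covering.

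With the weak $\mcc$-covering hypothesis in place, an invocation of Lemma \ref{ccovtohullccovlemma} completes the argument, producing the $\hullc$-covering $\psi(p):c(\tX)\to X$. I do not anticipate any genuine obstacle: all the real content (constructing lifts, verifying that $\wt{f}$ is continuous into the finer topology on $c(\tX)$) was already carried out in the proof of Lemma \ref{ccovtohullccovlemma}. The one point worth keeping straight is purely notational, namely that the coreflection $c$ and the functor $\psi$ appearing in the corollary are those associated to the smaller hull $\hullc$ rather than $\hulld$, but this is unambiguous since the target category of coverings in the statement is $\hccovx$.
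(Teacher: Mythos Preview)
Your proposal is correct and follows exactly the same approach as the paper: the paper's proof simply notes that $\mcc\subset\hulld$ implies a $\hulld$-covering is a weak $\mcc$-covering, and then invokes Lemma~\ref{ccovtohullccovlemma}. Your only addition is the intermediate inclusion through $\mcd$, which is harmless.
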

\begin{proof}
Since $\mcc\subset \hulld$, a $\hulld$-covering $p:\tX\to X$ is a weak $\mcc$-covering. Now apply Lemma \ref{ccovtohullccovlemma}.
\end{proof}
The coreflection $c:\hulld\to\hullc$ induces a functor $\phi:\mathbf{Cov}_{\hulld}(X)\to\hccovx$: on objects $\phi$ sends a $\hulld$-covering $p:\tX\to X$ to $\phi(p)=p:c(\tX)\to X$ and if $q:\tY\to X$ is another $\hulld$-covering and $f:\tX\to\tY$ satisfies $q\circ f=p$, then $\phi(f)=c(f):c(\tX)\to c(\tY)$ is the coreflection.
\begin{theorem}\label{embedding1}
If $\mcc\subset\mcd\subset\topz$, the functor $\phi:\mathbf{Cov}_{\hulld}(X)\to\hccovx$ is fully faithful. Moreover, the following diagram of fully faithful functors commutes up to natural isomorphism.\[\xymatrix{
\mathbf{Cov}_{\hulld}(X) \ar[dr]_-{\mu} \ar[rr]^-{\phi} && \hccovx \ar[dl]^-{\mu} \\ & G\mathbf{Set}
}\]
\end{theorem}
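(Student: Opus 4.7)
The plan is to get full faithfulness of $\phi$ essentially for free from the full faithfulness of the two monodromy functors. Applied to $\hullc$ and to $\hulld$ in turn, Theorem \ref{fullyfaithfulthm} yields fully faithful functors $\mu:\hccovx\to G\mathbf{Set}$ and $\mu:\mathbf{Cov}_{\hulld}(X)\to G\mathbf{Set}$. A standard categorical fact then does the rest: if $G$ and $H$ are fully faithful functors with $G\circ F\cong H$, then $F$ itself must be fully faithful. So the real content of the argument is to construct the natural isomorphism witnessing commutativity of the triangle, after which full faithfulness of $\phi$ is automatic.

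First I would confirm $\phi$ is well-defined and functorial. The preceding corollary gives $\phi(p)\in\hccovx$ for each $\hulld$-covering $p$. Given a morphism $f:\tX\to\tY$ of $\hulld$-coverings, $c(f):c(\tX)\to c(\tY)$ is continuous by Proposition \ref{coreflectionbasicprop} applied to $f:c(\tX)\to\tY$ (using $c(\tX)\in\hullc\subseteq\hulld$), and crucially $\phi$ acts as the identity on the underlying set functions of both objects and morphisms. For the natural isomorphism, at each $p:\tX\to X$ I take the morphism of disk-coverings
\[\xymatrix{ c(\tX) \ar[rr]^-{id} \ar[dr]_-{\phi(p)} && \tX \ar[dl]^-{p} \\ & X }\]
supplied by Lemma \ref{ccovtohullccovlemma}; its image under the disk-covering monodromy functor is a $G$-equivariant bijection $\eta_p:\mu(\phi(p))\to\mu(p)$, namely the identity on the underlying fiber. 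Naturality of $\eta$ in $p$ then reduces to an equality of set-functions and is automatic because every functor in sight acts as the identity on underlying set maps.

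With $\eta$ in place, the abstract fact delivers full faithfulness of $\phi$. For readers who prefer a direct verification: faithfulness is immediate since $\phi$ is the identity on underlying functions; and for fullness, a morphism $h:\phi(p)\to\phi(q)$ in $\hccovx$ pushes via $\mu$ and $\eta$ to a $G$-equivariant fiber map, which lifts uniquely through fullness of the right-hand monodromy to a morphism $f$ of $\hulld$-coverings, after which $\phi(f)=h$ follows from faithfulness. The main obstacle I anticipate is purely formal: articulating naturality of $\eta$ cleanly enough that the triangle literally ``commutes up to natural isomorphism'' in the sense stated. Given how trivially $\phi$ and $c$ act on underlying set data, however, this reduces to the $G$-equivariance already built into the disk-covering monodromy functor of Lemma \ref{faithfulpropdiskcov}, so no genuine new lifting argument is required.
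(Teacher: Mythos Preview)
Your proposal is correct. The ingredients match the paper's---$\phi$ acts as the identity on underlying sets and functions, the natural isomorphism comes from the disk-covering morphism $id:c(\tX)\to\tX$ of Lemma \ref{ccovtohullccovlemma}, and both monodromy functors are fully faithful by Theorem \ref{fullyfaithfulthm}---but you assemble them in the opposite order. The paper argues fullness directly first and only afterward writes down the natural isomorphism; you establish the natural isomorphism first and then extract full faithfulness from the general categorical fact that if $G$ and $H$ are fully faithful and $G\circ F\cong H$ then $F$ is fully faithful. Your organization actually buys something: the paper's one-line direct fullness argument rests on the assertion ``since $\hullc\subset\hulld$, we have $c(\tX)=\tX$,'' which is not literally correct for an arbitrary $\tX\in\hulld$, whereas your route manufactures the needed continuous morphism $\tX\to\tY$ via the fullness of $\mu$ on the $\hulld$ side (equivalently, via the lifting property of the target $\hulld$-covering $q$), so no such identification is required.
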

\begin{proof}
By definition, $\phi$ is the identity on underlying sets and functions, which makes faithfulness clear. Suppose $p:\tX\to X$ and $q:\tX\to X$ are $\hulld$-coverings and $f:c(\tX)\to c(\tY)$ is map such that $\phi(q)\circ f=\phi(p)$. Since $\hullc\subset\hulld$, we have $c(\tX)=\tX$ and $c(\tY)=Y$. Thus $f:\tX\to\tY$ satisfies $q\circ f=p$ and $c(f)=f$. We conclude that $\phi$ is full.

The component of the natural isomorphism at a given $\hulld$-covering $p:\tX\to X$, is the isomorphism of $G$-sets $(\phi(p))^{-1}(x_0)\to p^{-1}(x_0)$ induced by the morphism $id:c(\tX)\to \tX$ of disk-coverings $\phi(p)$ and $p$ over $X$. Naturality is straightforward to check.
\end{proof}
We conclude that using a smaller category $\mcc$ allows us to retain more of the subgroup lattice of $\pionex$.
\begin{corollary}\label{comparingcovsubgroups}
If $\mcc\subset\mcd\subset\topz$ and $H$ is a $\hulld$-covering subgroup of $\pionex$, then $H$ is also a $\hullc$-covering subgroup of $\pionex$. In particular, if $X$ admits a universal $\hulld$-covering, then $X$ also admits a universal $\hullc$-covering.
\end{corollary}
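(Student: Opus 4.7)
My plan is to derive this corollary directly from Theorem \ref{embedding1} together with the bookkeeping that the functor $\phi:\mathbf{Cov}_{\hulld}(X)\to\hccovx$ is the identity on underlying sets and functions. The essential point is that $\phi$ intertwines the two monodromy functors up to natural isomorphism, so it preserves the stabilizer subgroups of basepoints in fibers over $x_0$; and these stabilizers are precisely the covering subgroups.

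To carry this out, I would begin by assuming $H$ is a $\hulld$-covering subgroup and fixing a $\hulld$-covering $p:(\tX,\tx_0)\to(X,x_0)$ with $p_{\ast}(\pi_1(\tX,\tx_0))=H$. The unlabeled corollary immediately preceding Theorem \ref{embedding1} (or Lemma \ref{ccovtohullccovlemma} applied to $p$ viewed as a weak $\mcc$-covering, which is valid since $\mcc\subseteq\hulld$) supplies the $\hullc$-covering $\phi(p):c(\tX)\to X$, whose underlying function is the same as that of $p$; in particular $\tx_0\in c(\tX)$ still lies in the fiber over $x_0$. Because the continuous identity $c(\tX)\to\tX$ is itself a disk-covering and so induces an isomorphism $\pi_1(c(\tX),\tx_0)\cong\pi_1(\tX,\tx_0)$, I obtain
\[\phi(p)_{\ast}\bigl(\pi_1(c(\tX),\tx_0)\bigr)\,=\,p_{\ast}\bigl(\pi_1(\tX,\tx_0)\bigr)\,=\,H,\]
which exhibits $H$ as a $\hullc$-covering subgroup.

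For the ``in particular'' clause, I would use the fact that $p_{\ast}$ is injective for any disk-covering, so a $\mcc'$-covering $p':(\tX',\tx')\to(X,x_0)$ is universal (i.e.\ $\tX'$ is simply connected) if and only if $p'_{\ast}(\pi_1(\tX',\tx'))$ is the trivial subgroup of $\pionex$. Thus the existence of a universal $\hulld$-covering is equivalent to the trivial subgroup being a $\hulld$-covering subgroup; applying the first part then yields a universal $\hullc$-covering.

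I do not expect any real obstacle: the corollary is a formal consequence of the development already in place. The only bookkeeping to verify is that the basepoint and the image of the induced map on $\pi_1$ pass unchanged through the coreflection, and this is immediate from $\phi$ being the identity on underlying sets together with the fundamental group isomorphism induced by $id:c(\tX)\to\tX$.
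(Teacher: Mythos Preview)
Your proposal is correct and is essentially the argument the paper intends: the corollary is stated without proof in the paper, as an immediate consequence of Theorem~\ref{embedding1} (and the preceding corollary/Lemma~\ref{ccovtohullccovlemma}), and your unpacking of that consequence---applying the coreflection, using that $id:c(\tX)\to\tX$ induces an isomorphism on $\pi_1$, and reading off the image subgroup---is exactly right. Your treatment of the ``in particular'' clause via injectivity of $p_{\ast}$ is also correct and matches the paper's conventions.
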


We summarize the results of this section with a simple diagram: Suppose $D^2\in\mcc\subset\mcd\subset\topz$ with coreflections $\topz\to\hullc$ and $\topz\to\hulld$. Then the following diagram of fully faithful functors commutes up to natural isomorphism (where $\hookrightarrow$ denotes a fully faithful functor).
\[\xymatrix{
 & \mathbf{Cov}_{\hulld}(X) \ar@{_{(}->}[ddr]_{\mu} \ar@{^{(}->}[rr]^-{\phi} & & \hccovx \ar@{^{(}->}[ddl]^{\mu}  & \\
\mathbf{Cov}_{\mcd}(X) \ar@{_{(}->}[drr]_{\mu} \ar@{^{(}->}[ur]_{\psi} & & & & \ccovx \ar@{^{(}->}[dll]^{\mu} \ar@{_{(}->}[ul]^-{\psi} \\
&& G\mathbf{Set}
}\]
\subsection{Categorical constructions of $\mcc$-coverings}
The category $\topz$ whose objects are unbased path-connected spaces is not complete (i.e. closed under all small categorical limits). In order to construct limits and pullbacks of coverings, we restrict ourselves to based spaces and maps. Let $\mathbf{bCov}_{\mcc}$ and $\mathbf{bCov}_{\mcc}(X,x)$ denote the categories of all based $\mcc$-coverings $p:(\tX,\tx)\to(X,x)$ and based coverings over $(X,x)$ respectively.
\begin{lemma}\label{completelemma}
The categories $\mathbf{bCov}_{\hullc}$ and $\mathbf{bCov}_{\hullc}(X,x)$ are complete.
\end{lemma}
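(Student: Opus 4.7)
The plan is to establish completeness by constructing arbitrary products and equalizers, using the same basic recipe in both cases: form a fiber product inside a topological product, restrict to the path component of the canonical basepoint, and apply the coreflection functor $c:\topz\to\hullc$ to force the total space into $\hullc$.

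First I would treat the fixed-base category $\mathbf{bCov}_{\hullc}(X,x)$, which I observe is thin: given based $\hullc$-coverings $p:(\tX,\tx)\to(X,x)$ and $q:(\tY,\ty)\to(X,x)$, any based morphism from $p$ to $q$ is a based lift of $p$ through $q$, and such a lift is unique by Definition \ref{ccovdef}. Since a thin category is complete as soon as it admits arbitrary products, it suffices to construct these. Given a family $\{p_i:(\tX_i,\tx_i)\to(X,x)\}_{i\in I}$, let
\[E=\left\{(y_i)\in\prod_{i\in I}\tX_i \,\middle|\, p_i(y_i)=p_j(y_j)\text{ for all }i,j\in I\right\}\]
with the subspace topology from the product topology, let $E_0$ denote the path component of $(\tx_i)$ in $E$, and set $\tX=c(E_0)$ with basepoint $\tx=(\tx_i)$. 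The function $p:\tX\to X$ sending $(y_i)\mapsto p_i(y_i)$ is well-defined and continuous, and I claim it represents the product.

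The three key steps in the verification are: (i) compute $p_{\ast}(\pi_1(\tX,\tx))=\bigcap_i(p_i)_{\ast}(\pi_1(\tX_i,\tx_i))$, using that the continuous identity $c(E_0)\to E_0$ induces an isomorphism on $\pi_1$, so that a loop $\alpha$ in $X$ based at $x$ lies in $p_{\ast}(\pi_1(\tX,\tx))$ exactly when its lift to every $\tX_i$ closes up at $\tx_i$; (ii) given $(Y,y)\in\mathbf{b}\hullc$ and $f:(Y,y)\to(X,x)$ with $f_{\ast}(\pi_1(Y,y))\subseteq\bigcap_i(p_i)_{\ast}(\pi_1(\tX_i,\tx_i))$, invoke the lifting property of each $p_i$ to obtain unique lifts $\wt{f}_i:(Y,y)\to(\tX_i,\tx_i)$, assemble them into $\wt{f}(z)=(\wt{f}_i(z))\in E$, observe the image lies in $E_0$ by path-connectedness of $Y$, and invoke Proposition \ref{coreflectionbasicprop} to promote continuity from $Y\to E_0$ to $Y\to c(E_0)=\tX$; (iii) deduce uniqueness of $\wt{f}$ from unique path lifting for $p$, which itself follows from unique path lifting in each $\tX_i$. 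The universal property of the product then follows readily from the thinness of the category.

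For the larger category $\mathbf{bCov}_{\hullc}$ I would mimic this construction over a varying base. Given a small diagram $D:J\to\mathbf{bCov}_{\hullc}$ with $D(j)=\bigl(p_j:(\tY_j,\ty_j)\to(Y_j,y_j)\bigr)$, first form the limit $(X,x)$ of the based path-connected base spaces $(Y_j,y_j)$ in $\btopz$ as the path component of $(y_j)$ in the equalizer subspace of $\prod_jY_j$ carved out by the morphisms of $D$, which yields canonical projections $\pi_j:X\to Y_j$. Then perform the analogous fiber-product-path-component-and-coreflection construction on the total spaces $\tY_j$, compatible with both the arrows of $D$ and the $\pi_j$, producing a based space $(\tX,\tx)\in\mathbf{b}\hullc$ equipped with a canonical projection $p:\tX\to X$. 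Verification that $p$ is a $\hullc$-covering and that this object is the limit of $D$ follows essentially the same three steps as the fixed-base case.

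The main obstacle is step (ii) in the fixed-base argument: showing that the assembled lift $\wt{f}$ is continuous as a map into $c(E_0)$, not merely into $E_0$. Continuity into $E_0$ is immediate from the product topology and continuity of each $\wt{f}_i$; the upgrade to $c(E_0)$ relies critically on Proposition \ref{coreflectionbasicprop} together with the hypothesis $Y\in\hullc$. Once this is in hand, everything else reduces to bookkeeping driven by the thinness of the fiber category, unique path lifting, and the universal property of the coreflection.
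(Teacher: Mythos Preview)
Your proposal is correct and follows essentially the same construction as the paper: form the relevant limit in $\btopz$, restrict to the path component of the basepoint, and apply the coreflection $c$, then verify the lifting property by assembling the lifts through the individual $p_i$ and using Proposition~\ref{coreflectionbasicprop} to upgrade continuity into the coreflection. The paper packages this by first checking the map is a \emph{weak} $\hullc$-covering and then invoking Lemma~\ref{ccovtohullccovlemma}, whereas you verify the $\hullc$-covering property directly on $c(E_0)$; these are equivalent arguments.

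The one genuine organizational difference is your observation that $\mathbf{bCov}_{\hullc}(X,x)$ is thin, which lets you reduce completeness of the fixed-base category to the existence of products. This is a clean shortcut the paper does not take (it constructs arbitrary small limits directly in both categories), and it buys you a slightly simpler verification since you avoid tracking the morphisms of a general diagram $J$ in that case. Conversely, the paper's uniform treatment of arbitrary diagrams makes the two cases parallel and feeds directly into the subsequent Theorem~\ref{shapeinj}. One small point to make explicit when you write this up: the lifting property in Definition~\ref{ccovdef} must hold at \emph{every} point of $\tX$, not just the basepoint, so your step~(ii) should be stated for an arbitrary $\tilde z=(\tilde z_i)\in\tX$; the argument is identical.
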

\begin{proof}
Let $J$ be a small category and $F:J\to \mathbf{bCov}_{\mcc}$ be a diagram of $\hullc$-coverings: $F(j)=p_j:(\tX_j,\tx_j)\to (X_j,x_j)$ for object $j\in J$ and for each morphism $m:j\to k$ in $J$, there are maps $a_m:(\tX_j,\tx_j)\to (\tX_k,\tx_k)$ and $b_m:(X_j,x_j)\to(X_k,x_k)$ such that $p_k\circ a_m=b_m\circ p_j$. Let $\lim  p_j:(\lim \tX_j,\tx_0)\to(\lim X_j,x_0)$ be the standard limit in $\btopz$. Let $X=\lim X_j$, $\tX$ be the path-component of $\lim \tX_j$ containing $\tx_0$ and $p:\tX\to X$ be the restriction of $\lim p_j$. We check that $p:\tX\to X$ is a weak $\hullc$-covering.

For $j\in J$, let $a_j:(\tX,\tx_0)\to (\tX_j,\tx_j)$ and $b_j:(X,x_0)\to (X_j,x_j)$ be the canonical projections satisfying $b_j\circ p=p_j\circ a_j$. Let $f:(Y,y)\to (X,x_0)$ be a map such that $Y\in\hullc$ and $f_{\ast}(\pi_1(Y,y))\subseteq p_{\ast}(\pi_1(\tX,\tx_0))$. Let $f_j=b_j\circ f$ and note that \[(f_j)_{\ast}(\pi_1(Y,y))\subseteq (b_j\circ p)_{\ast}(\pi_1(\tX,\tx_0))=(p_j\circ a_j)_{\ast}(\pi_1(\tX,\tx_0))\subseteq (p_j)_{\ast}(\pi_1(\tX_j,\tx_j))\]Thus there is a unique map $\wt{f}_j:(Y,y)\to (\tX_j,\tx_j)$ such that $p_j\circ \wt{f}_j=f_j$. Given a morphism $m:j\to k$ in $J$, we have
\[p_k\circ a_m\circ \wt{f}_j=b_m\circ p_j\circ \wt{f}_j=b_m\circ f_j=b_m\circ b_j\circ f=b_k\circ f=f_k\]
so $a_m\circ \wt{f}_j=f_k$ by the uniqueness of lifts. It follows that there is a unique map $\wt{f}:(Y,y)\to (\lim \tX_j,\tx_0)$ such that $a_j\circ \wt{f}=\wt{f}_j$. Since $Y$ is path-connected, $\wt{f}$ has image in $\tX$. Finally, since $b_j\circ p\circ \wt{f}=p_j\circ a_j\circ \wt{f}=p_j\circ \wt{f}_j=f_j=b_j\circ f$, the universal property of $X$ gives $p\circ \wt{f}=f$. This proves that $p$ is a weak $\hullc$-covering.
\[\xymatrix{
&&&& \tX_k \ar[dd]_(.3){p_k} \\
&& \tX \ar[dd]_{p} \ar[urr]^{a_k} \ar[rrr]_(.3){a_j} & && \tX_j \ar[ul]_{a_m} \ar[dd]^{p_j}\\
&&&& X_k \\
Y \ar[rr]_{f} \ar@{-->}[uurr]^{\wt{f}} && X  \ar[urr]^{b_k} \ar[rrr]_{b_j} & && X_j \ar[ul]_{b_m}
}\]
By Lemma \ref{ccovtohullccovlemma}, the coreflection $\psi(p):c(\tX)\to X$ is a $\hullc$-covering. It is a standard application of coreflections and limits to check that $p$ is the categorical limit in $\mathbf{bCov}_{\hullc}$.

Applying the same argument to based $\hullc$-coverings over a given based space $(X,x)$, one can prove that $\mathbf{bCov}_{\hullc}(X,x)$ is complete.
\end{proof}
Since $(X,x_0)$ is the limit $\lim (X_j,x_j)$ in the proof of Lemma \ref{completelemma}, there is a canonical homomorphism $h:\pionex\to \lim \pi_1(X_j,x_j)$, $h([\alpha])=([b_j\circ \alpha])$ where the limit $\lim \pi_1(X_j,x_j)$ of groups is realized canonically as a subgroup of the product $\prod_{j\in J}\pi_1(X_j,x_j)$ taken over the objects of $J$. In the following theorem, we utilize this same notation; the result generalizes a key ingredient in the proof that fundamental groups of one-dimensional spaces inject into their first shape group, i.e. are $\pi_1$-shape injective \cite{EK98}.
\begin{theorem}\label{shapeinj}
Suppose the $\hullc$-covering $p:(\tX,\tx_0)\to (X,x_0)$ is the limit a diagram $F:J\to \mathbf{bCov}_{\hullc}$ of universal $\hullc$-coverings. Then $p$ is a universal $\hullc$-covering if and only if the canonical homomorphism $h:\pionex\to \lim \pi_1(X_j,x_j)$ is injective.
\end{theorem}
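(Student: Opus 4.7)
The plan is to prove both implications directly by combining two ingredients already set up in the excerpt: the characterization (from the discussion following Definition \ref{diskcoveringdef}) that $[\alpha]\in p_{\ast}(\pi_1(\tX,\tx_0))$ precisely when the unique lift $\wt{\alpha}_{\tx_0}$ is a loop, and the explicit limit construction from the proof of Lemma \ref{completelemma}, which supplies canonical projections $a_j:(\tX,\tx_0)\to(\tX_j,\tx_j)$ and $b_j:(X,x_0)\to(X_j,x_j)$ satisfying $p_j\circ a_j=b_j\circ p$, where the $a_j$ jointly detect points of $\lim\tX_j$ by the universal property. I would begin by fixing this notation.

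For the forward direction, assume $p$ is universal and $h([\alpha])=1$, so each $b_j\circ\alpha$ is null-homotopic in $X_j$. Because $p_j$ is a universal $\hullc$-covering we have $(p_j)_{\ast}(\pi_1(\tX_j,\tx_j))=1$, so the unique lift of $b_j\circ\alpha$ starting at $\tx_j$ is a loop. The key observation is that $a_j\circ\wt{\alpha}_{\tx_0}$ also starts at $\tx_j$ and projects under $p_j$ to $b_j\circ\alpha$, so by uniqueness of lifts it coincides with this loop. Hence $a_j(\wt{\alpha}_{\tx_0}(1))=\tx_j$ for every $j$, and the universal property of $\lim \tX_j$ in $\btopz$ forces $\wt{\alpha}_{\tx_0}(1)=\tx_0$. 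Therefore $[\alpha]\in p_{\ast}(\pi_1(\tX,\tx_0))=1$.

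For the reverse direction, assume $h$ is injective and let $[\gamma]\in\pi_1(\tX,\tx_0)$. Each projection $a_j\circ\gamma$ is a loop in the simply connected $\tX_j$ based at $\tx_j$, hence null-homotopic, so $b_j\circ p\circ\gamma=p_j\circ a_j\circ\gamma$ is null-homotopic in $X_j$. Consequently $h([p\circ\gamma])=1$ in $\lim\pi_1(X_j,x_j)$, whence $[p\circ\gamma]=1$ in $\pionex$ by injectivity of $h$, and finally $[\gamma]=1$ because $p_{\ast}$ is injective (every $\hullc$-covering being a disk-covering). Thus $\pi_1(\tX,\tx_0)=1$.

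The argument is essentially a bookkeeping exercise centered on the universal property of the limit, and I do not anticipate a serious obstacle. The only delicate point is that $\tX$ is defined as the path component of $\lim\tX_j$ containing $\tx_0$ rather than the full limit, but this causes no difficulty since both $\wt{\alpha}_{\tx_0}$ and $\gamma$ already live in $\tX$, and their endpoints in $\lim\tX_j$ are determined by their images under the $a_j$.
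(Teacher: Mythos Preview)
Your proof is correct and follows essentially the same approach as the paper's own argument: both directions hinge on the simple connectivity of the $\tX_j$, the injectivity of $p_{\ast}$, and the fact that points of $\lim\tX_j$ are determined by their projections. The only cosmetic difference is that in the direction ``$p$ universal $\Rightarrow$ $h$ injective'' the paper first lifts $b_j\circ\alpha$ to loops $\wt{\alpha}_j$ in each $\tX_j$ and then assembles them via the limit's universal property into a loop in $\tX$, whereas you take the unique lift $\wt{\alpha}_{\tx_0}$ in $\tX$ first and then project to see it is a loop---these are two sides of the same universal-property coin.
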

\begin{proof}
Suppose $h$ is injective. Let $\wt{\alpha}:S^1\to \tX$ be a loop based at $\tx_0$. Since $\tX_j$ is simply connected by assumption, the loop $p_j\circ a_j\circ \wt{\alpha}=b_j\circ p\circ \wt{\alpha}$ is null-homotopic in $X_j$. Since $h([p\circ \wt{\alpha}])=([b_j\circ p\circ \wt{\alpha}])$ is trivial and $h$ is injective, $[p\circ \wt{\alpha}]=1$. Since $p_{\ast}$ is injective, $[\wt{\alpha}]=1$ proving that $\tX$ is simply connected. For the converse, suppose $\alpha:S^1\to X$ is a loop such that $\alpha_j=b_j\circ \alpha$ is null-homotopic in $X_j$ for all $j\in J$. To prove the injectivity of $h$ it suffices to show $\alpha$ is null-homotopic. Since $p_j:\tX_j\to X_j$ is a $\hullc$-covering, there is a unique lift $\wt{\alpha}_j:S^1\to \tX_j$ such that $p_j\circ \wt{\alpha}_j=\alpha_j$. Given a morphism $m:j\to k$ in $J$, we have $p_k\circ a_m\circ \wt{\alpha}_j=b_m\circ p_j\circ \wt{\alpha}_j=b_m\circ \alpha_j=\alpha_k$, which by unique lifting proves that $a_m\circ \wt{\alpha}_j=\wt{\alpha}_k$. Since together, the loops $\wt{\alpha}_j$ and $\alpha_j$ define a cone from the identity $\hullc$-covering $id:S^1\to S^1$ to $F$, we see that there is a loop $\wt{\alpha}:S^1\to \tX$ such that $p\circ \wt{\alpha}=\alpha$ and $a_j\circ \wt{\alpha}=\wt{\alpha}_j$. But since $\tX$ is simply connected, both $\wt{\alpha}$ and $\alpha$ are null-homotopic.
\end{proof}
\begin{corollary}\label{shapeinjcor}
Suppose $(X,x_0)=\lim (X_j,x_j)$ is the limit of a diagram $F:J\to \mathbf{bTop_{0}}$ in the category of based, path-connected spaces where $X_j$ admits a universal $\hullc$-covering for each $j\in J$ and $b_j:X\to X_j$ is the projection. If the canonical homomorphism $h:\pionex\to \lim \pi_1(X_j,x_j)$, $h([\alpha])=([b_j\circ \alpha])$ is injective, then $X$ admits a universal $\hullc$-covering.
\end{corollary}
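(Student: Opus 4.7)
The plan is to reduce Corollary \ref{shapeinjcor} to Theorem \ref{shapeinj} by lifting the diagram $F:J\to \mathbf{bTop_0}$ to a diagram $\wt{F}:J\to \mathbf{bCov}_{\hullc}$ of universal $\hullc$-coverings whose limit, constructed as in Lemma \ref{completelemma}, is a $\hullc$-covering over $X = \lim X_j$. Once that lift is in place, Theorem \ref{shapeinj} together with the hypothesis that $h$ is injective immediately produces a universal $\hullc$-covering over $X$.

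To construct $\wt{F}$, for each object $j\in J$ choose a universal $\hullc$-covering $p_j:(\tX_j,\tx_j)\to(X_j,x_j)$, which exists by assumption. Given a morphism $m:j\to k$ in $J$, consider the composition $b_m\circ p_j:(\tX_j,\tx_j)\to(X_k,x_k)$. Since $\tX_j$ is simply connected and lies in $\hullc$, the condition $(b_m\circ p_j)_{\ast}(\pi_1(\tX_j,\tx_j))=1\subseteq (p_k)_{\ast}(\pi_1(\tX_k,\tx_k))$ is automatic, so the defining lifting property of the $\hullc$-covering $p_k$ yields a unique based map $a_m:(\tX_j,\tx_j)\to(\tX_k,\tx_k)$ with $p_k\circ a_m = b_m\circ p_j$. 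Functoriality $a_{\mathrm{id}}=\mathrm{id}$ and $a_{m\circ n}=a_m\circ a_n$ follows from the uniqueness clause in the lifting property applied to the $\hullc$-covering $p_k$: both sides of each equation are based lifts of the same map to $X_k$. This defines $\wt{F}:J\to \mathbf{bCov}_{\hullc}$ with $\wt{F}(j)=p_j$.

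By Lemma \ref{completelemma}, $\wt{F}$ admits a limit in $\mathbf{bCov}_{\hullc}$. Inspecting the construction in that lemma, the base of the limit covering is $\lim X_j = X$ (in $\btopz$), and the projections from the base $X$ to $X_j$ are exactly the $b_j$ of the hypothesis. So we obtain a $\hullc$-covering $p:(\tX,\tx_0)\to(X,x_0)$ realized as the limit of $\wt{F}$. Applying Theorem \ref{shapeinj} to this limit, $p$ is universal precisely when the canonical homomorphism $\pionex\to \lim \pi_1(X_j,x_j)$, $[\alpha]\mapsto ([b_j\circ\alpha])$, is injective. This is exactly the map $h$ in the hypothesis, which is injective by assumption. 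Therefore $p$ is a universal $\hullc$-covering of $X$.

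The only delicate point is confirming that the lifting of morphisms $b_m$ to morphisms $a_m$ is strictly functorial, but this is immediate from the uniqueness of based lifts against a $\hullc$-covering. Everything else is a direct invocation of previously established machinery: Lemma \ref{completelemma} supplies the limit in $\mathbf{bCov}_{\hullc}$, and Theorem \ref{shapeinj} converts the injectivity hypothesis on $h$ into simple-connectivity of the total space of the limit.
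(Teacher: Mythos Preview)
Your proof is correct and follows essentially the same approach as the paper's: choose universal $\hullc$-coverings $p_j$, use the unique lifting property (since $\tX_j$ is simply connected and in $\hullc$) to produce the maps $a_m$ covering $b_m$, obtain a diagram in $\mathbf{bCov}_{\hullc}$ with limit $p:(\tX,\tx_0)\to (X,x_0)$, and apply Theorem \ref{shapeinj}. You add an explicit check of functoriality of the $a_m$ via uniqueness of based lifts, which the paper leaves implicit.
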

\begin{proof}
Let $p_j:(\tX_j,\tx_j)\to (X_j,x_j)$ be a universal $\hullc$-covering. Given a morphism $m:j\to k$ in $J$ let $b_m=F(m):X_j\to X_k$. Since $\tX_j$ is simply connected, there is a unique map $a_m:(\tX_j,\tx_j)\to (\tX_k,\tx_k)$ such that $p_k\circ a_m=b_m\circ p_j$. Thus we have a diagram $J\to \mathbf{bCov}_{\hullc}$ of universal $\hullc$-coverings with limit $p:(\tX,\tx_0)\to (X,x_0)$. By Theorem \ref{shapeinj}, $\tX$ is simply connected.
\end{proof}
Corollary \ref{shapeinjcor} provides a categorical proof of the fact that a $\pi_1$-shape injective Peano continuum admits a universal $\lpcz$-covering. The general case appears in \cite{FZ07}. Interestingly, the converse of Corollary \ref{shapeinjcor} is false in the locally-path connected case \cite{FRVZ11}; there exist Peano continua $(X,x_0)= \varprojlim_{n} (X_n,x_n)$, which are the inverse limit of finite polyhedra and which admit a universal $\lpcz$-covering but for which $h:\pionex\to \varprojlim_{n} \pi_1(X_n,x_n)$ fails to be injective.

Another useful construction is the pullback construction: Fix a $\hullc$-covering $p:\tX\to X$ and any map $f:Y\to X$. We view the pullback $\tX\times_{X}Y=\{(\tx,y)\in\tX\times Y|p(\tx)=f(y)\}$ as a subspace of the direct product. Similar to the situation above, the space $\tX\times_{X}Y$ need not path-connected and the components need to be objects of $\hullc$. This failure has been fully characterized in the locally path-connected case $\hullc=\lpcz$ \cite{Fischerpullback}. To overcome this issue, we again choose a single path component and apply the coreflection $c$.

Fix points $y_0\in Y$, $x_0=f(x_0)$, and $\tx_0\in p^{-1}(x_0)$. Let $P$ be the path component of $\tX\times_{X}Y$ containing $(\tx_0,y_0)$ and let $f^{\#}\tX=c(P)$ be the coreflection. We call the projection $f^{\#}p:f^{\#}\tX\to Y$, $f^{\#}p(\tx,y)=y$ the \textit{pullback of }$\tX$ \textit{by} $f$ and check that it is a $\hullc$-covering map.

\begin{lemma}\label{pullbacklemma}
If $p:\tX\to X$ is a $\hullc$-covering, $Y\in\topz$ and $f:Y\to X$ is a map, then the pullback $f^{\#}p:f^{\#}\tX\to Y$ is a $\hullc$-covering corresponding to the subgroup $f_{\ast}^{-1}(p_{\ast}(\pi_1(\tX,\tx_0)))\subseteq \pi_1(Y,y_0)$.
\end{lemma}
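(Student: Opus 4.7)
The plan is to verify the two defining properties of an $\hullc$-covering in turn, with the coreflection $c$ doing the topological bookkeeping. First, $f^{\#}\tX = c(P)$ lies in $\hullc$ by construction. To obtain the lifting property, I will reduce lifts through $f^{\#}p$ to lifts through $p$, exploiting that any path in $\tX\times_{X}Y$ starting at $(\tx_0,y_0)$ is completely determined by its projection to $Y$ via unique path lifting in $p$. The bijection $\beta \mapsto (\widetilde{f\circ\beta}_{\tx_0},\beta)$ between $P(Y,y_0)$ and paths in $P$ starting at $(\tx_0,y_0)$ is the key technical fact that I will establish at the outset.

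Using this bijection, I can identify the stabilizer subgroup: a loop $\beta\in\Omega(Y,y_0)$ lifts to a loop in $P$ at $(\tx_0,y_0)$ if and only if $\widetilde{f\circ\beta}_{\tx_0}$ is itself a loop in $\tX$, i.e.\ $[f\circ\beta]\in p_{\ast}(\pi_1(\tX,\tx_0))$. This shows $(f^{\#}p)_{\ast}(\pi_1(f^{\#}\tX,(\tx_0,y_0))) = f_{\ast}^{-1}(p_{\ast}(\pi_1(\tX,\tx_0)))$, which is the subgroup claimed by the lemma. The same argument at any other basepoint $(\tx,y)\in f^{\#}\tX$ gives $(f^{\#}p)_{\ast}(\pi_1(f^{\#}\tX,(\tx,y))) = f_{\ast}^{-1}(p_{\ast}(\pi_1(\tX,\tx)))$.

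For the unique lifting property, suppose $Z\in\hullc$ and $g:(Z,z)\to(Y,y)$ satisfies $g_{\ast}(\pi_1(Z,z))\subseteq f_{\ast}^{-1}(p_{\ast}(\pi_1(\tX,\tx)))$. Then $(f\circ g)_{\ast}(\pi_1(Z,z))\subseteq p_{\ast}(\pi_1(\tX,\tx))$, so because $p$ is an $\hullc$-covering and $Z\in\hullc$, there is a unique map $\widetilde{f\circ g}:(Z,z)\to(\tX,\tx)$ with $p\circ\widetilde{f\circ g} = f\circ g$. I then define $\tilde{g}:Z\to\tX\times_{X}Y$ by $\tilde{g}(w)=(\widetilde{f\circ g}(w),g(w))$, which is continuous into the product, lies in $\tX\times_{X}Y$ by construction, and lies in the path component $P$ because $Z$ is path-connected and $\tilde{g}(z)=(\tx,y)\in P$. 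Continuity of $\tilde{g}:Z\to P$ together with $Z\in\hullc$ and Proposition \ref{coreflectionbasicprop} promotes $\tilde{g}$ to a continuous map $Z\to c(P)=f^{\#}\tX$. Clearly $f^{\#}p\circ \tilde{g} = g$.

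Uniqueness of the lift reduces, via the continuous identity $f^{\#}\tX\to P\hookrightarrow\tX\times_{X}Y$, to unique path lifting in $f^{\#}p$: given two based lifts $\tilde{g}_1,\tilde{g}_2$ of $g$, for any $w\in Z$ choose a path $\gamma$ in $Z$ from $z$ to $w$; then $\tilde{g}_1\circ\gamma$ and $\tilde{g}_2\circ\gamma$ start at $(\tx,y)$ and their second coordinates both equal $g\circ\gamma$, while their first coordinates both lift $f\circ g\circ\gamma$ from $\tx$ in $\tX$, so by unique path lifting in $p$ they agree. The only non-routine point I anticipate is the continuity promotion via the coreflection; once one recognizes that $Z\in\hullc$ makes the topologies on $P$ and $c(P)$ indistinguishable from the point of view of maps out of $Z$, everything else is formal manipulation with unique path lifting.
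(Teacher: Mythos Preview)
Your proof is correct and follows essentially the same approach as the paper's: lift $f\circ g$ through the $\hullc$-covering $p$, pair with $g$ to land in the pullback, use path-connectedness of $Z$ to hit the component $P$, and invoke the coreflection to obtain continuity into $c(P)=f^{\#}\tX$. The only cosmetic differences are that you compute the stabilizer subgroup first (via your path-lifting bijection) and then use it to verify the lifting hypothesis, whereas the paper checks the lifting property directly from the commutativity $p\circ q = f\circ f^{\#}p$ and identifies the subgroup afterward; and you write the lift out componentwise and argue uniqueness by hand, while the paper packages both as the universal property of the pullback.
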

\begin{proof}
By construction, $f^{\#}\tX$ is an object of $\hullc$. Let $Z\in\hullc$, $(\tx,y)\in f^{\#}\tX$ and $g:(Z,z)\to(Y,y)$ be a map such that $g_{\ast}(\pi_1(Z,z))\subseteq (f^{\#}p)_{\ast}(\pi_1(f^{\#}\tX,(\tx,y)))$. Let $q:f^{\#}\tX\to\tX$ be the second projection such that $p\circ q=f\circ f^{\#}p$. Now
\begin{eqnarray*}
(f\circ g)_{\ast}(\pi_1(Z,z)) &\subseteq & f_{\ast}\left((f^{\#}p)_{\ast}(\pi_1(f^{\#}\tX,(\tx,y)))\right)\\
&=& p_{\ast}(q_{\ast}(\pi_1(f^{\#}\tX,(\tx,y))))\\
&\subseteq & p_{\ast}(\pi_1(\tX,\tx))
\end{eqnarray*}
Thus there is a unique map $k:(Z,z)\to (\tX,\tx)$ such that $p\circ k=f\circ g$. By the universal property of pullbacks, we have a unique map $\wt{g}:(Z,z)\to (\tX\times_{X} Y,(\tx,y))$ satisfying $q\circ \wt{g}=k$ and $f^{\#}p\circ \wt{g}=g$. Since $Z$ is path-connected $\wt{g}$ has image in $P$ and since $Z\in\hullc$, $\wt{g}:Z\to f^{\#}\tX$ is continuous.\[\xymatrix{
Z \ar@{-->}[dr]^-{\wt{g}} \ar@/^2pc/[drr]^-{k} \ar@/_2pc/[ddr]_-{g} \\
& f^{\#}\tX  \ar[r]^-{q} \ar[d]_-{f^{\#}p} & \tX  \ar[d]^-{p} \\
& Y \ar[r]_-{f} & X }\]
Finally, we check that $(f^{\#}p)_{\ast}(\pi_1(f^{\#}\tX,(\tx_0,y_0)))=f_{\ast}^{-1}(p_{\ast}(\pi_1(\tX,\tx_0)))$. One inclusion is clear from the commutativity of the square. For the other suppose $[\gamma]\in f_{\ast}^{-1}(p_{\ast}(\pi_1(\tX,\tx_0)))$. Since $[f\circ \gamma]\in p_{\ast}(\pi_1(\tX,\tx_0)$, there is a unique lift $\kappa:S^1\to \tX$ such that $p\circ \kappa=f\circ \gamma$. Thus there is a unique loop $\wt{\gamma}:S^1\to f^{\#}\tX$ such that $f^{\#}p\circ \wt{\gamma}=\gamma$ and $q\circ \wt{\gamma}=\kappa$. It follows that $[\gamma]\in (f^{\#}p)_{\ast}(\pi_1(f^{\#}\tX,(\tx_0,y_0)))$.
\end{proof}
In fact, a based map $f:(Y,y_0)\to (X,x_0)$ induces a functor $f^{\#}:\mathbf{bCov}_{\hullc}(X,x_0)\to\mathbf{bCov}_{\hullc}(X,y_0)$. If $p':(\tX ',\tx_{0} ')\to (X,x_0)$ is another $\hullc$-covering over $X$ and $g:(\tX,\tx_0)\to (\tX ',\tx_{0} ')$ is a morphism such that $p'\circ g=p$, then $f^{\#}g:f^{\#}\tX\to f^{\#}\tX '$ is uniquely induced by a straightforward pullback diagram.
%
%
\begin{corollary}\label{simplyconnectedpullback}
Suppose $p:\tX\to X$ is a universal $\hullc$-covering. Then the pullback $f^{\#}p:f^{\#}\tX\to Y$ is a universal $\hullc$-covering if and only if $f_{\ast}:\pi_1(Y,y_0)\to \pi_1(X,x_0)$ is injective.
\end{corollary}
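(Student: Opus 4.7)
The plan is to deduce this directly from Lemma \ref{pullbacklemma}, which already identifies the characteristic subgroup of the pullback, and then to unwind what ``universal'' means in terms of subgroups.

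First, I would recall that a $\hullc$-covering is in particular a disk-covering, so the induced homomorphism $(f^{\#}p)_{\ast}:\pi_1(f^{\#}\tX,(\tx_0,y_0))\to \pi_1(Y,y_0)$ is injective. Hence $f^{\#}\tX$ is simply connected if and only if its characteristic subgroup $(f^{\#}p)_{\ast}(\pi_1(f^{\#}\tX,(\tx_0,y_0)))$ is trivial in $\pi_1(Y,y_0)$.

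Next, because $p$ is a \emph{universal} $\hullc$-covering, $\tX$ is simply connected, so $p_{\ast}(\pi_1(\tX,\tx_0))=1$. Lemma \ref{pullbacklemma} gives
\[(f^{\#}p)_{\ast}(\pi_1(f^{\#}\tX,(\tx_0,y_0)))=f_{\ast}^{-1}(p_{\ast}(\pi_1(\tX,\tx_0)))=f_{\ast}^{-1}(1)=\ker(f_{\ast}).\]

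Combining the two observations, $f^{\#}\tX$ is simply connected if and only if $\ker(f_{\ast})=1$, i.e.\ if and only if $f_{\ast}:\pi_1(Y,y_0)\to\pi_1(X,x_0)$ is injective. There is no real obstacle here since all the work has been packaged into Lemma \ref{pullbacklemma}; the only thing to be careful about is invoking the disk-covering injectivity $(f^{\#}p)_{\ast}$ (established in the discussion after Definition \ref{diskcoveringdef}) so that triviality of the characteristic subgroup genuinely means simple connectivity of $f^{\#}\tX$.
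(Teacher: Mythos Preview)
Your proof is correct and is exactly the argument the paper has in mind: the corollary is stated without proof immediately after Lemma \ref{pullbacklemma}, as a direct consequence of the identification $(f^{\#}p)_{\ast}(\pi_1(f^{\#}\tX,(\tx_0,y_0)))=f_{\ast}^{-1}(p_{\ast}(\pi_1(\tX,\tx_0)))$ together with the injectivity of $(f^{\#}p)_{\ast}$ for disk-coverings. There is nothing to add.
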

%
%
%
We use the existence of pullbacks to verify the closure of $\hullc$-covering subgroups under intersection.
\begin{theorem}\label{intersectiontheorem}
If $\{H_j|j\in J\}$ is any set of $\hullc$-covering subgroups of $\pionex$, then $\bigcap_{j}H_j$ is a $\hullc$-covering subgroup.
\end{theorem}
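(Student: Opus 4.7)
The plan is to leverage the completeness of $\mathbf{bCov}_{\hullc}(X,x_0)$ established in Lemma \ref{completelemma}. For each $j\in J$, I fix a based $\hullc$-covering $p_j:(\tX_j,\tx_j)\to (X,x_0)$ with $(p_j)_{\ast}(\pi_1(\tX_j,\tx_j))=H_j$, which exists because $H_j$ is a $\hullc$-covering subgroup. Viewing $J$ as a discrete small category, the family $\{p_j\}_{j\in J}$ constitutes a diagram in $\mathbf{bCov}_{\hullc}(X,x_0)$, and by Lemma \ref{completelemma} it admits a limit $p:(\tX,\tx_0)\to (X,x_0)$ together with canonical projection morphisms $a_j:(\tX,\tx_0)\to(\tX_j,\tx_j)$ satisfying $p_j\circ a_j=p$. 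The theorem then reduces to the identity $p_{\ast}(\pi_1(\tX,\tx_0))=\bigcap_{j\in J}H_j$.

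Since $p$ is a $\hullc$-covering, $[\alpha]\in\pionex$ lies in $p_{\ast}(\pi_1(\tX,\tx_0))$ precisely when the unique lift $\wt{\alpha}_{\tx_0}:\ui\to\tX$ of $\alpha$ starting at $\tx_0$ is itself a loop. The inclusion $p_{\ast}(\pi_1(\tX,\tx_0))\subseteq\bigcap_jH_j$ is transparent: if $\wt{\alpha}_{\tx_0}$ is a loop then so is $a_j\circ\wt{\alpha}_{\tx_0}$, and the latter is a loop in $\tX_j$ based at $\tx_j$ lifting $\alpha$ through $p_j$, which forces $[\alpha]\in H_j$ for every $j$.

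For the reverse inclusion, I assume $[\alpha]\in H_j$ for every $j$, so each $p_j$ admits a loop lift $\wt{\alpha}_j:\ui\to\tX_j$ of $\alpha$ based at $\tx_j$. Following the limit recipe in the proof of Lemma \ref{completelemma}, the coordinate-wise assembly $\beta(t)=(\wt{\alpha}_j(t))_{j\in J}$ is continuous into the product $\prod_{j\in J}\tX_j$, lies in the fiber product because $p_j\circ\wt{\alpha}_j=\alpha$ for every $j$, and traces a path from $(\tx_j)_j$ back to itself, so it stays inside the distinguished path component $P$ used to build $\tX=c(P)$. Since $\ui\in\hullc$, Proposition \ref{coreflectionbasicprop} upgrades $\beta$ to a continuous loop $\beta:\ui\to\tX$ based at $\tx_0$. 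Uniqueness of path lifts identifies $\beta$ with $\wt{\alpha}_{\tx_0}$, whence $[\alpha]\in p_{\ast}(\pi_1(\tX,\tx_0))$.

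The only genuine subtlety I anticipate is verifying continuity of $\beta$ at the level of the coreflected limit $\tX$ rather than merely the set-level fiber product; this is dispatched cleanly by Proposition \ref{coreflectionbasicprop} together with $\ui\in\hullc$. Everything else is a direct application of unique path lifting through the projection morphisms $a_j$, so no additional machinery beyond the already-established completeness of $\mathbf{bCov}_{\hullc}(X,x_0)$ is required.
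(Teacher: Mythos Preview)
Your argument is correct. The paper's proof reaches the same covering by a slightly different route: it first forms the product covering $\prod_j p_j:c(\prod_j\tX_j)\to\prod_j X$ in $\mathbf{bCov}_{\hullc}$ (so the base changes to $\prod_j X$), identifies its stabilizer as $\prod_j H_j\subseteq\prod_j\pionex$, and then pulls back along the diagonal $\delta:X\to\prod_j X$, invoking Lemma~\ref{pullbacklemma} to read off the stabilizer as $\delta_{\ast}^{-1}\big(\prod_j H_j\big)=\bigcap_j H_j$. Your approach instead stays inside the over-category $\mathbf{bCov}_{\hullc}(X,x_0)$ from the start, taking the product there (which is exactly the fiber product over $X$, i.e.\ the same space the paper obtains after pulling back along $\delta$) and computing the stabilizer directly via unique path lifting through the projections $a_j$. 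The paper's route is a bit more modular---the subgroup identification is outsourced entirely to the pullback lemma---while yours is more self-contained and avoids the detour through $\prod_j X$ at the cost of the explicit loop-assembly argument you give. Both are clean; your handling of the coreflection via Proposition~\ref{coreflectionbasicprop} and $\ui\in\hullc$ is exactly the right move.
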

\begin{proof}
Fix a $\hullc$-covering $p_j:(\tX_j,\tx_j)\to (X,x_0)$ such that $(p_j)_{\ast}(\pi_1(\tX_j,\tx_j))=H_j$. Let $(\tX,\tx_0)=(\prod_{j}\tX_j,(\tx_j))$. By Lemma \ref{completelemma}, the product $p=\prod p_j:c(\tX)\to \prod_jX$ is a $\hullc$-covering. Note that $p_{\ast}(\pi_1(\tX,\tx_0))$ is the product of subgroups $\prod_{j}H_j$ when we make the identification $\prod_{j}\pionex\cong\pi_1(\prod_{j}X,(x_0))$. Let $\delta:X\to\prod_{j}X$ be the diagonal map and $\delta^{\#}p:\delta^{\#}\tX\to X$ be the pullback of $\tX$ by $\delta$. By Lemma \ref{pullbacklemma}, $\delta^{\#}p$ is a $\hullc$-covering map such that the image of the homomorphism
$(\delta^{\#}p)_{\ast}$ is $\delta_{\ast}^{-1}(\prod_{j}H_j)$. It is easy to see that $\delta_{\ast}^{-1}(\prod_{j}H_j)=\bigcap_{j}H_j$ and thus $\bigcap_{j}H_j$ is a $\hullc$-covering subgroup.
%
\end{proof}
Theorem \ref{intersectiontheorem} implies the existence of a minimal $\hullc$-covering subgroup for every based path-connected space $(X,x_0)$: Let \[\txt{$ U(X,\mcc)=\bigcap\{H|H$ is a $\hullc$-covering subgroup of $\pionex\}$.}\] The existence of minimal $\hullc$-covering subgroups immediately implies the existence of an initial based $\hullc$-covering $p:(\tX,\tx_0)\to (X,x_0)$ such that $p_{\ast}(\pi_1(\tX,\tx_0))=U(X,\mcc)$.
\begin{theorem}
For any space $X$, the category $\hccovx$ has an initial object.
\end{theorem}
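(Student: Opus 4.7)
The theorem follows essentially as a direct corollary of Theorem~\ref{intersectiontheorem}, as indicated by the preceding paragraph.

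My plan is to apply Theorem~\ref{intersectiontheorem} to the entire collection of $\hullc$-covering subgroups of $\pionex$: this immediately yields that
$$U = U(X,\mcc) = \bigcap\{H : H \text{ is a } \hullc\text{-covering subgroup of } \pionex\}$$
is itself a $\hullc$-covering subgroup. I then fix a $\hullc$-covering $p:(\tX,\tx_0)\to(X,x_0)$ with $p_*(\pi_1(\tX,\tx_0))=U$ and propose $p$ as the initial object of $\hccovx$.

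To verify initiality, I take an arbitrary $\hullc$-covering $q:\tY\to X$ and choose any point $\ty_0\in q^{-1}(x_0)$. The subgroup $q_*(\pi_1(\tY,\ty_0))$ is by definition a $\hullc$-covering subgroup, so it contains $U=p_*(\pi_1(\tX,\tx_0))$. Since $\tX\in\hullc$, the defining lifting property of the $\hullc$-covering $q$ applied to the map $p:(\tX,\tx_0)\to(X,x_0)$ then produces a unique based map $f:(\tX,\tx_0)\to(\tY,\ty_0)$ with $q\circ f = p$, giving the required morphism $\tX\to\tY$ over $X$.

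The step I expect to require the most care is the uniqueness clause of initiality once basepoints are forgotten in $\hccovx$. My plan is to treat the basepoint $\tx_0\in p^{-1}(x_0)$ as fixed data for the candidate initial object, so that any triangle morphism $f:\tX\to\tY$ over $X$ is pinned down by its value $f(\tx_0)\in q^{-1}(x_0)$; the uniqueness above then identifies $f$ with the canonical based lift, and the full faithfulness of $\mu:\hccovx\to G\mathbf{Set}$ from Theorem~\ref{fullyfaithfulthm} ensures that this canonical choice represents the unique morphism in $\hccovx$.
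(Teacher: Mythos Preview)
Your existence argument---intersecting all $\hullc$-covering subgroups via Theorem~\ref{intersectiontheorem} and taking the corresponding covering---is exactly the paper's approach, stated in the paragraph immediately preceding the theorem.

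The gap is in your uniqueness argument for the \emph{unbased} category $\hccovx$. You correctly observe that any morphism $f:\tX\to\tY$ over $X$ is determined by $f(\tx_0)\in q^{-1}(x_0)$, but nothing forces $f(\tx_0)=\ty_0$: each point of the fiber $q^{-1}(x_0)$ gives a (generally distinct) morphism. Concretely, under the fully faithful functor $\mu$ the object $p$ corresponds to the $G$-set $G/U$ and $q$ to $G/H$; since $U$ is normal and $U\subseteq H$, the $G$-equivariant maps $G/U\to G/H$ are $gU\mapsto gaH$ for every $aH\in G/H$, so there are $[G:H]$ of them. Full faithfulness of $\mu$ thus confirms there are many morphisms $p\to q$, not one, so your appeal to Theorem~\ref{fullyfaithfulthm} does the opposite of what you want. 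Already for $X=S^1$ the identity covering (with $U=1$) has $\mathbb{Z}$ many automorphisms in $\hccovx$ and hence cannot be initial.

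The resolution is that the paper's argument (and the sentence right before the theorem) is really about the \emph{based} category $\mathbf{bCov}_{\hullc}(X,x_0)$: there, uniqueness is immediate from condition~(2) in Definition~\ref{ccovdef}, since both $\tx_0$ and its image $\ty_0$ are specified. This is reiterated in the groupoid section, where the existence of an initial object is stated for the image of $\mathbf{bCov}_{\hullc}(X,x_0)$. Your proof becomes correct once you work in the based category and drop the final paragraph.
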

\begin{corollary}
A space $X$ admits a universal $\hullc$-covering if and only if $U(X,\mcc)=1$.
\end{corollary}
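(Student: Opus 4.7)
The plan is to reduce both implications to the basic fact, noted in the discussion of disk-coverings, that the induced homomorphism $p_{\ast}:\pi_1(\tX,\tx_0)\to\pionex$ is injective for any $\hullc$-covering $p$. Given this injectivity, a $\hullc$-covering $p:(\tX,\tx_0)\to(X,x_0)$ is universal precisely when its corresponding covering subgroup $p_{\ast}(\pi_1(\tX,\tx_0))$ is trivial in $\pionex$.

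For the forward direction, I would assume $p:(\tX,\tx_0)\to(X,x_0)$ is a universal $\hullc$-covering. Then $\pi_1(\tX,\tx_0)=1$, so the trivial subgroup $1\subseteq\pionex$ is itself a $\hullc$-covering subgroup. Because $U(X,\mcc)$ is defined as the intersection of all $\hullc$-covering subgroups, it is contained in the trivial subgroup, hence $U(X,\mcc)=1$.

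For the reverse direction, I would invoke the theorem immediately preceding the corollary, which asserts the existence of an initial based $\hullc$-covering $p:(\tX,\tx_0)\to(X,x_0)$ satisfying $p_{\ast}(\pi_1(\tX,\tx_0))=U(X,\mcc)$. Assuming $U(X,\mcc)=1$, we conclude $p_{\ast}(\pi_1(\tX,\tx_0))=1$, and by the injectivity of $p_{\ast}$ this forces $\pi_1(\tX,\tx_0)=1$. Thus $\tX$ is simply connected and $p$ is a universal $\hullc$-covering.

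There is no real obstacle here; the entire content has been packaged into Theorem \ref{intersectiontheorem} and the subsequent existence of an initial $\hullc$-covering realizing $U(X,\mcc)$. The corollary is essentially a restatement in terms of the simple-connectivity criterion, combined with the observation that $p_{\ast}$ is injective for any $\hullc$-covering (since such a map is, in particular, a disk-covering).
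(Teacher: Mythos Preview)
Your argument is correct and matches the reasoning the paper leaves implicit; the corollary is stated without proof in the paper, and your two implications (using the injectivity of $p_{\ast}$ for disk-coverings and the existence of the initial $\hullc$-covering realizing $U(X,\mcc)$) are exactly the intended justification.
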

Any non-trivial elements of the subgroup $U(X,\mcc)$ may be viewed as those elements of $\pionex$ which are indistinguishable from the homotopy class of the constant loop with respect to $\hullc$-coverings. Thus if $U(X,\mcc)\neq 1$ there are homotopy classes of loops which are inaccessible to study via $\hullc$-coverings.
\begin{proposition}
$U(X,\mcc)$ is a normal subgroup of $\pionex$.
\end{proposition}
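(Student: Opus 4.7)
The plan is to reduce normality to the fact that the family of $\hullc$-covering subgroups is closed under conjugation, which is precisely the content of Remark \ref{conjugateremark}. Since $U(X,\mcc)$ is defined as an intersection of a conjugation-invariant family of subgroups, normality will follow immediately from a standard algebraic identity.

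More explicitly, let $\mathcal{S}$ denote the collection of all $\hullc$-covering subgroups of $\pionex$. First I would fix $g \in \pionex$ and invoke Remark \ref{conjugateremark} to observe that the map $H \mapsto gHg^{-1}$ sends $\mathcal{S}$ into $\mathcal{S}$; applying the same observation to $g^{-1}$, this map is a bijection of $\mathcal{S}$ onto itself. Next I would use the elementary identity
\[
g\left(\bigcap_{H \in \mathcal{S}} H\right)g^{-1} \;=\; \bigcap_{H \in \mathcal{S}} gHg^{-1},
\]
which holds for any family of subsets of a group, to compute
\[
g\, U(X,\mcc)\, g^{-1} \;=\; \bigcap_{H \in \mathcal{S}} gHg^{-1} \;=\; \bigcap_{H' \in \mathcal{S}} H' \;=\; U(X,\mcc),
\]
where the middle equality uses that conjugation by $g$ permutes $\mathcal{S}$. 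Since $g \in \pionex$ was arbitrary, $U(X,\mcc)$ is normal in $\pionex$.

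There is no real obstacle here: the entire argument rides on Remark \ref{conjugateremark}, whose justification (changing the basepoint of $\tX$ within the fiber $p^{-1}(x_0)$) was already supplied earlier. In a short proof I would probably skip introducing the auxiliary symbol $\mathcal{S}$ and simply write: for every $g \in \pionex$ and every $\hullc$-covering subgroup $H$, the conjugate $g^{-1}Hg$ is again a $\hullc$-covering subgroup, so $U(X,\mcc) \subseteq g^{-1}Hg$ by definition of $U(X,\mcc)$; equivalently $gU(X,\mcc)g^{-1} \subseteq H$, and taking the intersection over all such $H$ yields $gU(X,\mcc)g^{-1} \subseteq U(X,\mcc)$.
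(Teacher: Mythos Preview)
Your argument is correct and, in fact, more direct than the paper's. Both proofs rest on Remark \ref{conjugateremark}, but you use it in the most economical way: the family $\mathcal{S}$ of $\hullc$-covering subgroups is conjugation-invariant, and the intersection of any conjugation-invariant family of subgroups is normal. The paper instead passes through the cores $N_H=\bigcap_{g\in G}gHg^{-1}$, invokes Theorem \ref{intersectiontheorem} to see that each $N_H$ (and then the intersection $N$ of all of them) is again a $\hullc$-covering subgroup, and finally argues $N=U(X,\mcc)$ by minimality. Your route avoids any appeal to Theorem \ref{intersectiontheorem}; the paper's route, while slightly longer, has the minor byproduct of exhibiting $U(X,\mcc)$ explicitly as an intersection of \emph{normal} $\hullc$-covering subgroups.
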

\begin{proof}
Recall from Remark \ref{conjugateremark} that if $H$ is a $\hullc$-covering subgroup of $G=\pionex$, then $gHg^{-1}$ is also a $\hullc$-covering subgroup for each $g\in G$. The normal subgroup $N_H=\bigcap_{g\in G}gHg^{-1}$ (called the \textit{core} of $H$ in $G$) is the largest normal subgroup of $G$ which is contained in $H$. By Theorem \ref{intersectiontheorem}, $N_H$ is a $\hullc$-covering subgroup. Let $N$ be the intersection of the cores of all $\hullc$-covering subgroups $H$. Clearly $N$ is normal. Since $N_H\subseteq H$ for all $H$, we have $N\subseteq U(X,\mcc)$ and since $N$ is a $\hullc$-covering subgroup (by Theorem \ref{intersectiontheorem}), we see that $N= U(X,\mcc)$.
\end{proof}
\section{The groupoid approach}
Recall that if a map $f:X\to Y$ has the unique path-lifting property and $f\circ \alpha=f\circ \beta$ for paths $\alpha$ and $\beta$, then we cannot conclude that $\alpha=\beta$ unless we already know they agree at a point. Consequently, the basepoints in the definition of $\mcc$-covering cannot be disregarded. Nevertheless, some of the above results have nice analogues involving the fundamental groupoid, which avoid reference to subgroup conjugacy classes. We briefly mention a few of them here.

Given a small groupoid $\mathcal{G}$, let $\mathcal{G}(x,-)$ denote the set of morphisms in $\mathcal{G}$ whose source is the object $x$. A \textit{covering morphism} over $\mathcal{G}$ is a functor $F:\mathcal{H}\to \mathcal{G}$ between groupoids such that for each object $y\in \mathcal{H}$ the induced function $\mathcal{H}(y,-)\to \mathcal{G}(F(y),-)$ is bijective \cite{Brown06}. The category of covering morphisms over $\mathcal{G}$ is denoted $\textbf{CovMor}(\mathcal{G})$. It is a standard fact that $\textbf{CovMor}(\mathcal{G})$ is equivalent to the functor category $\mathbf{Set}^{\mathcal{G}}$ of operations of $\mathcal{G}$ on sets. Moreover, if $G=\mathcal{G}(x,x)$ is the vertex group viewed as a one-object subcategory and $\mathcal{G}$ is connected, then the inclusion $G\to\mathcal{G}$ is an equivalence of categories which in turn induces an equivalence $\mathbf{Set}^{\mathcal{G}}\to G\mathbf{Set}$. We are interested in the case where $\mathcal{G}=\pi_1(X)$ is the fundamental groupoid of $X$ and $G=\pi_1(X,x_0)$.

Since a disk-covering $p:E\to X$ has unique lifting of all paths and homotopies of paths, it is clear that the induced functor $\pi_{1}(p):\pi_1(E)\to \pi_1(X)$ is a covering morphism. Thus we obtain a monodromy functor $M:\dcovx\to \textbf{CovMor}(\pi_1(X))$ by applying $\pi_1$ to disk-coverings and their morphisms. If $E:\textbf{CovMor}(\pi_1(X))\to \pionex\mathbf{Set}$ is the equivalence of categories referenced in the previous paragraph, then $E\circ M\simeq\mu$ is the faithful monodromy functor of Lemma \ref{faithfulpropdiskcov}. Consequently, $M$ is faithful.

For the rest of this section, suppose $\mcc\subset \topz$ is a category containing the unit disk as an object.
\begin{theorem}
For any space $X$, there is a canonical fully faithful monodromy functor $M:\ccovx \to \textbf{CovMor}(\pi_1(X))$.
\end{theorem}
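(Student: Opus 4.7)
The plan is to define $M:\ccovx\to \textbf{CovMor}(\pi_1(X))$ simply as the restriction to $\ccovx$ of the already-constructed monodromy functor $M:\dcovx\to \textbf{CovMor}(\pi_1(X))$ obtained by applying $\pi_1$ to disk-coverings and their morphisms. Since $\ccovx$ is by definition a full subcategory of $\dcovx$, this restriction is automatically well-defined, canonical, and functorial, and it inherits faithfulness from the ambient functor (which was noted in the paragraph preceding the theorem). The entire content of the theorem therefore reduces to verifying fullness of the restriction.

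For fullness, I would leverage the natural isomorphism $\eta:E\circ M\simeq \mu$ already in hand, where $E:\textbf{CovMor}(\pi_1(X))\to G\mathbf{Set}$ is the equivalence induced by restriction to the vertex group $G=\pionex$. Given objects $p,q\in\ccovx$ and a morphism $\phi:M(p)\to M(q)$ in $\textbf{CovMor}(\pi_1(X))$, I would apply $E$ and transport along $\eta$ to produce the $G$-equivariant map $\psi=\eta_{q}\circ E(\phi)\circ \eta_{p}^{-1}:\mu(p)\to \mu(q)$. Since $\mu|_{\ccovx}$ is full by Theorem \ref{fullyfaithfulthm}, there is a morphism $f:p\to q$ in $\ccovx$ with $\mu(f)=\psi$. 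Naturality of $\eta$ then gives $E(M(f))=E(\phi)$, and faithfulness of the equivalence $E$ finally yields $M(f)=\phi$.

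The argument is purely formal, with the substantive input absorbed into Theorem \ref{fullyfaithfulthm} (full faithfulness of $\mu$ on $\ccovx$) and the standard groupoid-to-group-action equivalence $E$. The only mild subtlety is that one must invoke $\eta$ componentwise when transporting morphisms rather than citing it merely as an abstract natural isomorphism; beyond that bookkeeping, there is no real obstacle.
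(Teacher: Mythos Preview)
Your proposal is correct and follows essentially the same approach as the paper: define $M$ by restriction from $\dcovx$ and deduce full faithfulness from Theorem~\ref{fullyfaithfulthm} via the equivalence $E$ and the natural isomorphism $E\circ M\simeq\mu$. The paper compresses this entire formal argument into a single sentence (``It follows from Theorem~\ref{fullyfaithfulthm}''), whereas you have spelled out the $\eta$-transport explicitly; there is no substantive difference.
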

\begin{proof}
Since $\ccovx\subset \dcovx$, we define the functor $M:\ccovx\to \textbf{CovMor}(\pi_1(X))$ simply to be the restriction of $M:\dcovx\to \textbf{CovMor}(\pi_1(X))$. It follows from Theorem \ref{fullyfaithfulthm} that $M:\ccovx\to \textbf{CovMor}(\pi_1(X))$ is fully faithful.
\end{proof}
Since the following diagram commutes up to natural isomorphism and the bottom functor is an equivalence, it is possible to consistently replace $\mu$ with $M$ in all of the previous diagrams involving monodromy.
\[\xymatrix{
& \ccovx \ar@{^{(}->}[dl]_-{M} \ar@{_{(}->}[dr]^-{\mu} \\ \textbf{CovMor}(\pi_1(X)) \ar[rr]^-{\cong}_-{E} &&  \pionex\mathbf{Set}}\]

The results on categorical limits and pullbacks as applied to based $\hullc$-coverings may be summarized as follows.

\begin{theorem}
If $\textbf{CovMor}$ is the category of all covering morphisms, then for any $\mcc$ the image of $M:\mathbf{bCov}_{\hullc}\to \textbf{CovMor}$ is complete.
\end{theorem}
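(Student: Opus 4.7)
The plan is to reduce the theorem to two facts already in hand: the monodromy functor $M:\mathbf{bCov}_{\hullc}\to \textbf{CovMor}$ is fully faithful (by combining the preceding theorem with the fact that $M$ factors through the categories $\mathbf{bCov}_{\hullc}(X,x)$, where it agrees up to natural isomorphism with the fully faithful $\mu$), and $\mathbf{bCov}_{\hullc}$ is complete by Lemma \ref{completelemma}. The strategy is the general principle that a fully faithful functor with complete source exhibits its image as a complete full subcategory of its target, and I would execute it explicitly in this setting.

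First, let $J$ be a small category and let $G:J\to \textbf{CovMor}$ be a diagram whose objects and morphisms all lie in the image of $M$. For each object $j\in J$, choose $F(j)\in\mathbf{bCov}_{\hullc}$ with $M(F(j))=G(j)$, and for each morphism $m:j\to k$ in $J$, let $F(m)$ be the unique morphism in $\mathbf{bCov}_{\hullc}$ with $M(F(m))=G(m)$; such a unique lift exists precisely because $M$ is fully faithful. Faithfulness of $M$, combined with functoriality of $G$, forces $F(m\circ n)=F(m)\circ F(n)$ and $F(\mathrm{id}_j)=\mathrm{id}_{F(j)}$, so $F:J\to \mathbf{bCov}_{\hullc}$ is a genuine diagram. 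By Lemma \ref{completelemma}, $F$ admits a limit cone $\lambda_j: L\to F(j)$ in $\mathbf{bCov}_{\hullc}$, and applying $M$ produces a cone $M(\lambda_j):M(L)\to G(j)$ in the image.

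Next, I would verify that $M(L)$ is a limit cone of $G$ within the image of $M$. Suppose $Q=M(P)$ lies in the image and $q_j:Q\to G(j)$ is a cone. Fullness of $M$ provides morphisms $\phi_j:P\to F(j)$ with $M(\phi_j)=q_j$, and faithfulness together with the cone condition $G(m)\circ q_j=q_k$ forces $F(m)\circ \phi_j=\phi_k$, so $\{\phi_j\}$ is a cone $P\to F$ in $\mathbf{bCov}_{\hullc}$. The universal property of $L$ yields a unique morphism $\psi:P\to L$ in $\mathbf{bCov}_{\hullc}$ with $\lambda_j\circ\psi=\phi_j$, and $M(\psi):Q\to M(L)$ is then a mediating morphism in $\textbf{CovMor}$. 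Uniqueness in the image is immediate from faithfulness of $M$: any two mediators $Q\to M(L)$ pull back along the full functor $M$ to mediators $P\to L$, which must coincide by the universal property in $\mathbf{bCov}_{\hullc}$.

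I do not expect a real obstacle in this argument; once the fully faithful embedding $M$ and the completeness of $\mathbf{bCov}_{\hullc}$ have been established, the rest is routine abstract nonsense. The only point worth flagging is that the limit produced inside the image is \emph{not} the naive limit one might try to compute pointwise inside $\textbf{CovMor}$: by Lemma \ref{completelemma} it involves passing to a path component of the topological inverse limit and then applying the coreflection $c$ to the total space, both of which are essential to keep the limit object inside $\hullc$ and ensure it defines a $\hullc$-covering in the first place.
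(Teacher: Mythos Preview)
Your overall strategy---transport limits from $\mathbf{bCov}_{\hullc}$ (complete by Lemma~\ref{completelemma}) along $M$ to its image---is correct and is exactly what the paper intends when it presents this theorem as a summary of the earlier results on limits. However, there is a genuine gap in your justification.

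You assert that $M:\mathbf{bCov}_{\hullc}\to\textbf{CovMor}$ is fully faithful, but this is false. The functor $M$ is faithful (a functor $\pi_1(X)\to\pi_1(Y)$ determines the underlying point-map, hence recovers $g$ and $f$), but it is not full: an arbitrary groupoid functor $b:\pi_1(X)\to\pi_1(Y)$ need not equal $\pi_1(g)$ for any continuous $g:X\to Y$. For instance, take $X=Y=\mathbb{R}$ with the identity coverings; any set-theoretic function $\mathbb{R}\to\mathbb{R}$ extends to a functor between the codiscrete fundamental groupoids, yet only the continuous ones lie in the image of $M$. Your parenthetical argument---that $M$ is fully faithful on each fiber $\mathbf{bCov}_{\hullc}(X,x)$---does not globalize, precisely because $\pi_1$ is not full on base maps between \emph{different} spaces.

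The fix is to read ``image of $M$'' as the image \emph{subcategory} (image objects together with image morphisms), not the full subcategory on image objects. Under this reading, every morphism you need to lift---both the diagram arrows $G(m)$ and the cone legs $q_j$---is in the image by hypothesis, so a lift exists tautologically and is unique by faithfulness of $M$. Since a faithful functor is an isomorphism onto its image subcategory, completeness of the image is then equivalent to completeness of $\mathbf{bCov}_{\hullc}$, and your steps go through verbatim once you replace each appeal to ``fullness of $M$'' with ``the morphism lies in the image by hypothesis, and its lift is unique by faithfulness.'' Your closing remark about the role of the path component and the coreflection $c$ in forming the limit is accurate and worth keeping.
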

\begin{theorem} For any $(X,x_0)\in \btopz$, the image of $M:\mathbf{bCov}_{\hullc}(X,x_0)\to \textbf{CovMor}(\pi_1(X))$ is complete and has an initial object.
\end{theorem}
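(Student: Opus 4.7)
The plan is to transport the two structural features---completeness and the existence of an initial object---from the source category $\mathbf{bCov}_{\hullc}(X,x_0)$ across the functor $M$ to its image in $\textbf{CovMor}(\pi_1(X))$. Both features are already in hand on the source side: Lemma \ref{completelemma} gives completeness of $\mathbf{bCov}_{\hullc}(X,x_0)$, and the initial object in $\hccovx$ constructed from the minimal $\hullc$-covering subgroup $U(X,\mcc)$ lifts to an initial object in the based category $\mathbf{bCov}_{\hullc}(X,x_0)$ by selecting any point in the fiber over $x_0$. The crux is therefore to argue that $M$ restricted to the based category is fully faithful, so that its image is categorically equivalent to the source and inherits whatever invariants categorical equivalence preserves, including completeness and the existence of an initial object.

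First, I would nail down fully faithfulness of the functor $M:\mathbf{bCov}_{\hullc}(X,x_0)\to \textbf{CovMor}(\pi_1(X))$. The unbased version $M:\hccovx\to \textbf{CovMor}(\pi_1(X))$ was already established via the equivalence $E\circ M\simeq \mu$ together with the fully faithfulness of $\mu$ from Theorem \ref{fullyfaithfulthm}. A morphism in $\mathbf{bCov}_{\hullc}(X,x_0)$ is a map $f:\tX\to \tX'$ commuting with the projections and sending $\tx_0$ to $\tx_0'$, and its image under $M$ is the induced covering-morphism functor $\pi_1(f)$, which likewise sends the object $\tx_0$ to $\tx_0'$. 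Faithfulness then descends directly from the unbased case, while fullness follows because any morphism in $\textbf{CovMor}(\pi_1(X))$ between $\pi_1(p)$ and $\pi_1(p')$ that carries $\tx_0$ to $\tx_0'$ corresponds, via $E$, to a pointed $G$-equivariant map on fibers sending $\tx_0\mapsto \tx_0'$, which in turn is realized by a unique based map of coverings by the proof of Theorem \ref{fullyfaithfulthm}.

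Once $M$ is fully faithful, the image $M(\mathbf{bCov}_{\hullc}(X,x_0))$, viewed as a full subcategory of $\textbf{CovMor}(\pi_1(X))$, is equivalent as a category to $\mathbf{bCov}_{\hullc}(X,x_0)$. Completeness and possession of an initial object are categorical invariants under equivalence, so the image is complete and has an initial object. Concretely, given any small diagram $D:J\to M(\mathbf{bCov}_{\hullc}(X,x_0))$, I would lift $D$ (uniquely up to canonical isomorphism, by full faithfulness) to a diagram $\widetilde{D}$ in $\mathbf{bCov}_{\hullc}(X,x_0)$, form its limit there using Lemma \ref{completelemma}, and then apply $M$ to obtain the limit of $D$ in the image. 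The initial object transfers by the same principle, being the image under $M$ of the initial based $\hullc$-covering over $(X,x_0)$.

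The only nontrivial point---and the one I expect to be the main obstacle---is verifying fully faithfulness cleanly in the based setting, since the categories $\mathbf{bCov}_{\hullc}(X,x_0)$ and $\textbf{CovMor}(\pi_1(X))$ encode basepoint data differently (a preferred fiber point on one side, a preferred object in the covering groupoid on the other). This amounts, however, to the straightforward observation that basepoint preservation translates across $\pi_1$ unambiguously, so no genuinely new machinery is required beyond what has already been assembled for the unbased versions.
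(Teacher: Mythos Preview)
The paper states this theorem without proof, presenting it as a groupoid-language summary of the Section~2 results (Lemma~\ref{completelemma} for completeness and Theorem~\ref{intersectiontheorem} together with the minimal subgroup $U(X,\mcc)$ for the initial object). Your strategy---transport these properties across $M$ via full faithfulness---is exactly the implicit argument the paper has in mind, so in spirit you match the paper.

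That said, the one subtlety you flag is real and you dismiss it too quickly. The functor $M$ from the \emph{based} category $\mathbf{bCov}_{\hullc}(X,x_0)$ lands in the \emph{unbased} category $\textbf{CovMor}(\pi_1(X))$, which does not record the chosen fiber point. As written, $M$ is faithful but \emph{not} full on the based category: a morphism $g:\pi_1(\tX)\to\pi_1(\tX')$ of covering morphisms over $\pi_1(X)$ need not send the object $\tx_0$ to $\tx_0'$, and there is nothing in the target forcing it to. Worse, $M$ is not injective on objects, since the same underlying covering $p:\tX\to X$ equipped with two different basepoints in $p^{-1}(x_0)$ maps to the same covering morphism $\pi_1(p)$. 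So the sentence ``basepoint preservation translates across $\pi_1$ unambiguously'' does not by itself establish an equivalence onto the image as a subcategory of $\textbf{CovMor}(\pi_1(X))$.

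The clean fix is to replace the target by a based version---covering morphisms $F:\mathcal{H}\to\pi_1(X)$ equipped with a chosen object of $\mathcal{H}$ lying over $x_0$, with morphisms required to preserve this datum. Into that category $M$ genuinely is fully faithful (and injective on objects), and your transfer argument then goes through verbatim. The paper does not spell this out either, so your proposal is no less rigorous than the original; but since you explicitly named this as the main obstacle, you should say precisely how it is resolved rather than declaring it straightforward.
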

\section{The universal covering theory: $\Delta$-coverings}\label{deltacoveringsection}
In this section, we consider in more detail the category in which all other $\mcc$-covering categories embed, that is, when $D^2$ is the only object of $\mcc$.
\begin{definition}
A subset $U\subseteq X$ is $\Delta$\textit{-open} in $X$ if $f^{-1}(U)$ is open in $D^2$ for every map $f:D^2\to X$. A space $X$ is $\Delta$\textit{-generated} if a set $U$ is open in $X$ if and only if $U$ is $\Delta$-open in $X$.
\end{definition}
Let $\Delta\topz$ denote the category of path-connected, $\Delta$-generated spaces. We prefer to use the following characterization of $\Delta$-generated topologies which follows directly from the existence of space filling curves $\ui\to D^2$: A set $U\subseteq X$ is $\Delta$-open if and only if $\alpha^{-1}(U)$ is open in $\ui$ for every path $\alpha:\ui\to X$.

Recall a space $X$ is \textit{sequential} if $U\subset X$ is open if and only if for every convergent sequence $x_n\to x$ in $X$ with $x\in U$, there exists $N\geq 1$ such that $x_n\in U$ for all $n\geq N$. The following lemma follows from well-known topological facts.
\begin{lemma}\label{deltagenpropslemma}\cite{CSWdtopology}
Every first countable and locally path-connected space is $\Delta$-generated and every $\Delta$-generated space is sequential and locally path-connected.
\end{lemma}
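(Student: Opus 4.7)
The plan is to prove each of the three implications separately. The nontrivial direction is the first: that first countability together with local path-connectedness forces every $\Delta$-open set to be open. I would argue contrapositively. Suppose $U\subseteq X$ is $\Delta$-open but not open, and pick $x\in U$ no neighborhood of which lies inside $U$. First countability gives a nested neighborhood base $V_1\supseteq V_2\supseteq\cdots$ at $x$, and local path-connectedness lets me shrink each $V_n$ to be path-connected. For each $n$ choose $x_n\in V_n\setminus U$ and, inside $V_n$, a path $\gamma_n:[1/(n{+}1),1/n]\to V_n$ from $x_{n+1}$ to $x_n$. Glue these together and set $\alpha(0)=x$ to obtain $\alpha:[0,1]\to X$. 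Continuity of $\alpha$ away from $0$ is automatic, while continuity at $0$ follows because $\alpha([0,1/N])\subseteq V_N$ for every $N$ and $\{V_n\}$ is a base at $x$. But then $0\in\alpha^{-1}(U)$ while $1/n\notin\alpha^{-1}(U)$, so $\alpha^{-1}(U)$ is not open in $[0,1]$, contradicting $\Delta$-openness.

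For the converse direction showing that any $\Delta$-generated space is sequential, I would take a sequentially open $U\subseteq X$ and verify that $\alpha^{-1}(U)$ is open in $[0,1]$ for every path $\alpha$. If $t_n\to t_0$ in $[0,1]$ with $\alpha(t_0)\in U$, continuity of $\alpha$ yields $\alpha(t_n)\to\alpha(t_0)$, and sequential openness forces $\alpha(t_n)\in U$ eventually; this rules out sequences in $[0,1]\setminus\alpha^{-1}(U)$ converging into $\alpha^{-1}(U)$, and since $[0,1]$ is metric this suffices to conclude $\alpha^{-1}(U)$ is open. Hence $U$ is $\Delta$-open, and then open by hypothesis.

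For local path-connectedness, I would show that for any open $U\subseteq X$ and any $x\in U$ the path-component $P$ of $U$ containing $x$ is itself open. It is enough to check $P$ is $\Delta$-open. Given a path $\alpha:[0,1]\to X$ and $t\in\alpha^{-1}(P)$, openness of $U$ and continuity of $\alpha$ yield $\epsilon>0$ with $\alpha((t-\epsilon,t+\epsilon))\subseteq U$; for any $s$ in that interval the restriction $\alpha|_{[\min(s,t),\max(s,t)]}$ is a path in $U$ from $\alpha(t)\in P$ to $\alpha(s)$, so $\alpha(s)\in P$. Thus $(t-\epsilon,t+\epsilon)\subseteq\alpha^{-1}(P)$, proving $P$ is $\Delta$-open.

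The main obstacle is the construction of the path $\alpha$ in the first direction, where the dovetailing of the $\gamma_n$ must be carried out so that continuity at $t=0$ is preserved; this is precisely the point where both first countability (to produce a countable base) and local path-connectedness (to connect the $x_n$ to one another inside $V_n$) are indispensable. The other two implications are essentially formal consequences of the definition once one recognizes that the defining test class of maps, paths out of $[0,1]$, is itself sequentially and topologically well-behaved.
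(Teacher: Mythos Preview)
Your proof is correct in all three parts. The paper does not actually supply a proof of this lemma; it simply records the statement with a citation to \cite{CSWdtopology} and the remark that it ``follows from well-known topological facts.'' Your argument fills in precisely those details: the dovetailed-path construction for the first implication is the standard one, and your verifications that $\Delta$-generated spaces are sequential and locally path-connected are the expected direct arguments using that $[0,1]$ is metric and that restrictions of paths witness path-components of open sets as $\Delta$-open.
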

If $\mcd$ denotes the full subcategory of $\topz$ whose only object is $D^2$, then $\scrh(\mcd)=\Delta\topz$. We denote the coreflection functor by $\Delta:\topz\to\Delta\topz$. Thus $\Delta(X)$ has the same underlying set as $X$ and has the topology consisting of all $\Delta$-open sets. For the sake of convenience we call a $\Delta\topz$-covering $p:\tX\to X$ simply a $\Delta$\textit{-covering}. Definition \ref{ccovdef} translates as follows.
\begin{definition}\label{deltacovdef}
A map $p:\tX\to X$ is a $\Delta$\textit{-covering map} if
\begin{enumerate}
\item $\tX\in\Delta\topz$,
\item for every space $Y\in\Delta\topz$, point $\tx\in\tX$, and based map $f:(Y,y)\to (X,p(\tx))$ such that $f_{\ast}(\pi_1(Y,y))\subset p_{\ast}(\pi_1(\tX,\tx))$, there is a unique map $\wt{f}:(Y,y)\to (\tX,\tx)$ such that $p\circ \wt{f}=f$.
\end{enumerate}
\end{definition}
The following theorem is the relevant combination of Theorem \ref{embedding1} and Corollary \ref{faithfulfunctorprop}; it implies that among all $\mcc$-coverings, $\Delta$-coverings retain the most information about the subgroup structure of $\pionex$.
\begin{theorem}
If $\mcc$ has $D^2$ as an object, then there is a canonical fully faithful functor $\ccovx\to\delcovx$.
\end{theorem}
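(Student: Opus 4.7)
The plan is to realize this embedding as the composition of two fully faithful functors that have already been constructed in the paper: the coreflection embedding of Corollary~\ref{faithfulfunctorprop} and the embedding between nested coreflective hulls provided by Theorem~\ref{embedding1}. Let $\mcd$ denote the full subcategory of $\topz$ whose only object is $D^2$, so $\hulld=\Delta\topz$ and $\mathbf{Cov}_{\hulld}(X)=\delcovx$. Since $D^2$ is an object of $\mcc$, we have the inclusion $\mcd\subset\mcc$; this is exactly the hypothesis required to invoke Theorem~\ref{embedding1} (with the roles of $\mcc$ and $\mcd$ in that theorem played by $\mcd$ and $\mcc$ here, respectively).

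First I would apply Corollary~\ref{faithfulfunctorprop} to obtain the fully faithful functor $\psi:\ccovx\to\hccovx$ sending a $\mcc$-covering $p:\tX\to X$ to its coreflection $\psi(p):c(\tX)\to X$, which is an $\hullc$-covering and agrees with $p$ on underlying sets and functions. Next I would apply Theorem~\ref{embedding1} to the inclusion $\mcd\subset\mcc$, which yields a fully faithful functor $\phi:\hccovx\to\mathbf{Cov}_{\hulld}(X)=\delcovx$ carrying an $\hullc$-covering $q:\tY\to X$ to its $\Delta$-coreflection $\phi(q):\Delta(\tY)\to X$. Both functors are the identity on underlying sets and functions, and both commute with the respective monodromy functors into $G\mathbf{Set}$ up to natural isomorphism.

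Composing, I obtain a functor $\phi\circ\psi:\ccovx\to\delcovx$ that sends a $\mcc$-covering $p:\tX\to X$ to the $\Delta$-covering $\Delta(\tX)\to X$ (equivalently, we may pass directly from $\tX$ to its $\Delta$-coreflection in a single step, since successive coreflections $\tX\to c(\tX)\to \Delta(c(\tX))=\Delta(\tX)$ yield the same topology). Because the composition of fully faithful functors is fully faithful, $\phi\circ\psi$ is fully faithful, and it is canonical in that it is determined entirely by the universal property of the coreflections. There is essentially no obstacle: the only point meriting a line of verification is that the hypotheses of Theorem~\ref{embedding1} are met in the form I need, which follows immediately from $D^2\in\mcc$, and that the large diagram of monodromy functors in the summary at the end of Section~2.4 continues to commute, which is likewise immediate from the compatibility statements already built into the two cited results.
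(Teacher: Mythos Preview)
Your proposal is correct and matches the paper's own treatment: the paper states this theorem as ``the relevant combination of Theorem~\ref{embedding1} and Corollary~\ref{faithfulfunctorprop}'' without giving further detail, and your write-up spells out precisely that composition, correctly identifying the role-swap needed to apply Theorem~\ref{embedding1} with $\mcd\subset\mcc$.
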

\begin{corollary}
Suppose $\mcc\subset\topz$ has $D^2$ as an object. Then every $\mcc$-covering subgroup of $\pionex $ is also a $\Delta$-covering subgroup of $\pionex$.
\end{corollary}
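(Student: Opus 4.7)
The plan is to deduce this immediately from the fully faithful functor $F:\ccovx\to\delcovx$ furnished by the preceding theorem. First, unpacking the definition of a $\mcc$-covering subgroup, I fix a $\mcc$-covering $p:(\tX,\tx_0)\to (X,x_0)$ with $p_{\ast}(\pi_1(\tX,\tx_0))=H$. Applying $F$ produces a $\Delta$-covering $q=F(p):\tX'\to X$ over the same base, and the only remaining task is to identify the image of the induced homomorphism on $\pi_1$.

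There are two equivalent ways to finish, and I would present the more hands-on one. Tracing through the constructions in Corollary \ref{faithfulfunctorprop} and Theorem \ref{embedding1}, the functor $F$ is the identity on underlying sets and functions: it only refines the topology on $\tX$ via successive coreflections, ultimately landing in $\Delta\topz$. Because every path and every homotopy of paths into $\tX$ is automatically continuous into the refined topology on $\tX'$, the continuous identity $\tX'\to\tX$ is a disk-covering and hence induces an isomorphism of fundamental groups. Taking $\tx_0'=\tx_0$ as the basepoint of $\tX'$, we get $q_{\ast}(\pi_1(\tX',\tx_0'))=p_{\ast}(\pi_1(\tX,\tx_0))=H$, so $H$ is a $\Delta$-covering subgroup.

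A more abstract alternative would be to observe that the preceding theorem asserts the monodromy functors commute with $F$ up to natural isomorphism, so the fibers $p^{-1}(x_0)$ and $q^{-1}(x_0)$ are isomorphic $\pionex$-sets; since $H$ is the stabilizer of $\tx_0\in p^{-1}(x_0)$, it must also occur as the stabilizer of the corresponding point in $q^{-1}(x_0)$, which by Theorem \ref{fullyfaithfulthm} is exactly $q_{\ast}(\pi_1(\tX',\tx_0'))$ for the appropriate basepoint. Either way, there is essentially no obstacle: the corollary is a formal consequence of the embedding theorem, and the only care needed is the minor bookkeeping to confirm that one recovers $H$ itself rather than merely a conjugate, which is automatic since the embedding preserves basepoints.
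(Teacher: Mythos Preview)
Your proof is correct and follows exactly the approach the paper intends: the corollary is stated without proof because it is an immediate consequence of the preceding theorem's fully faithful functor $\ccovx\to\delcovx$, which (as you correctly trace through Corollary~\ref{faithfulfunctorprop} and Theorem~\ref{embedding1}) is the identity on underlying sets and functions and hence preserves the image subgroup $H$. Both your hands-on argument and your monodromy alternative are valid ways to unpack this, and your remark about recovering $H$ itself rather than a conjugate is a nice point of care.
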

We now observe that the images of the functors $\mu:\delcovx\to G\mathbf{Set}$ and $\mu:\mathbf{DCov}(X)\to G\mathbf{Set}$ agree, or equivalently that every disk-covering subgroup of $\pionex$ is a $\Delta$-covering subgroup of $\pionex$. Recall that a disk-covering is precisely a weak $\mcd$-covering where the only object of $\mcd$ is $D^2$. The following lemma is a special case of Lemma \ref{ccovtohullccovlemma}.
\begin{lemma}
If $X$ is path-connected and $p:E\to X$ is a disk-covering, then $p:\Delta(E)\to X$ is a $\Delta$-covering.
\end{lemma}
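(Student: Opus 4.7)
The plan is to invoke Lemma \ref{ccovtohullccovlemma} directly with $\mcc=\mcd$, the full subcategory of $\topz$ whose only object is $D^2$. Two identifications handle essentially all of the work. First, as already noted in the remark following Definition \ref{ccovdef}, a weak $\mcd$-covering is precisely a disk-covering: condition (2) of Definition \ref{ccovdef} specialized to $Y=D^2$ is exactly the unique lifting of based disks, and the subgroup constraint $f_{\ast}(\pi_1(D^2,d))\subseteq p_{\ast}(\pi_1(\tX,\tx))$ is vacuous since $\pi_1(D^2,d)=1$. Second, the example early in Section 2.4 gives $\scrh(\mcd)=\Delta\topz$, with coreflection functor $c=\Delta:\topz\to\Delta\topz$.

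With these identifications in hand, the proof is a one-line application. Given a disk-covering $p:E\to X$, view it as a weak $\mcd$-covering. Lemma \ref{ccovtohullccovlemma} then asserts that $\psi(p):c(E)\to X$ is an $\scrh(\mcd)$-covering; unwinding the identifications, this reads $\psi(p):\Delta(E)\to X$ being a $\Delta$-covering in the sense of Definition \ref{deltacovdef}.

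There is essentially no obstacle beyond verifying that the hypotheses of Lemma \ref{ccovtohullccovlemma} are truly in force. The only minor point worth mentioning is that the path-connectedness required of $E$ in the definition of a disk-covering is compatible with $\Delta\topz\subset\topz$ consisting of path-connected spaces, so $\Delta(E)$ is a legitimate object of $\Delta\topz$ (path-connectedness is preserved by the coreflection since any path in $E$ is automatically continuous into $\Delta(E)$). No new estimates, constructions, or diagram chases are needed, which is why the author leaves the statement as an immediate consequence of the more general lemma.
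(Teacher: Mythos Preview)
Your proof is correct and is exactly the approach the paper takes: the paper states this lemma as a special case of Lemma~\ref{ccovtohullccovlemma} applied with $\mcc=\mcd$, using the identifications that a weak $\mcd$-covering is precisely a disk-covering and that $\scrh(\mcd)=\Delta\topz$ with coreflection $\Delta$. Your additional remark about path-connectedness of $\Delta(E)$ is a helpful clarification but not strictly needed, since Lemma~\ref{ccovtohullccovlemma} already handles this implicitly.
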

We can now extend the coreflection $\Delta:\topz\to\Delta\topz$ to a functor $\dcovx\to\delcovx$. If $f:E\to E'$ satisfies $p'\circ f=p$ for disk-coverings $p:E\to X$ and $p:E'\to X$, then $\Delta(f):\Delta(E)\to \Delta(E')$ is a morphism of the $\Delta$-coverings $p:\Delta(E)\to X$, $p:\Delta(E')\to X$.
\begin{corollary}
The functor $\Delta:\dcovx\to\delcovx$ taking disk-covering $p:E\to X$ to the $\Delta$-covering $p:\Delta(E)\to X$ is right adjoint to the inclusion $\delcovx\to\dcovx$.
\end{corollary}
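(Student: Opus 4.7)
The plan is to establish the adjunction by directly producing a natural bijection of hom-sets
\[
\mathrm{Hom}_{\dcovx}(\iota(q),\, p) \;\cong\; \mathrm{Hom}_{\delcovx}(q,\, \Delta(p))
\]
for any $\Delta$-covering $q:\tY\to X$ and any disk-covering $p:E\to X$, where $\iota:\delcovx\to\dcovx$ denotes the inclusion. Since morphisms in both categories are just triangles over $X$, and since $\Delta(E)$ and $E$ have the same underlying set (the identity $\Delta(E)\to E$ being continuous), the bijection will be given on the level of functions by the identity on the underlying set; the only content is the continuity translation.

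First, I would unpack the two sides. A morphism on the left is a continuous $f:\tY\to E$ with $p\circ f=q$. A morphism on the right is a continuous $g:\tY\to \Delta(E)$ with $\Delta(p)\circ g=q$. Given such an $f$, I apply Proposition \ref{coreflectionbasicprop}: since $\tY\in\Delta\topz=\scrh(\mcd)$, the continuous map $f:\tY\to E$ automatically lifts to a continuous map $\tY\to\Delta(E)$ (with the same underlying function). This yields $g$, and $\Delta(p)\circ g=q$ because the underlying functions agree with $p\circ f=q$. Conversely, given $g:\tY\to \Delta(E)$, postcomposition with the continuous identity $\Delta(E)\to E$ produces $f=id\circ g:\tY\to E$, again with the same underlying function and hence satisfying $p\circ f=q$. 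These two assignments are mutually inverse because they are both the identity on underlying set-theoretic data.

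Naturality in both variables is immediate: a morphism $q\to q'$ of $\Delta$-coverings or a morphism $p\to p'$ of disk-coverings acts by pre- or post-composition, and the identification of underlying functions is preserved. To make the adjunction fully explicit, I would note that the unit $\eta_q:q\to \Delta(\iota(q))$ is the identity (since $\tY$ is already $\Delta$-generated, so $\Delta(\tY)=\tY$), and the counit $\varepsilon_p:\iota(\Delta(p))\to p$ is the morphism of disk-coverings given by the continuous identity $\Delta(E)\to E$. A quick check of the triangle identities reduces to the fact that the identity function is continuous between any space and its $\Delta$-coreflection in the appropriate direction, which is again Proposition \ref{coreflectionbasicprop}.

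There is no real obstacle; the only subtle point is verifying that the $\Delta$-coreflection of the underlying space of a disk-covering really produces a $\Delta$-covering (so that $\Delta$ lands in $\delcovx$), but this was already established in the preceding lemma. Once that is in hand, the adjunction is just a formal consequence of the coreflection property, now transported from $\topz$ to the slice-like categories $\dcovx$ and $\delcovx$ via the fact that morphisms over $X$ are determined by their underlying continuous maps.
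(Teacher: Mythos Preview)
Your proof is correct. The paper itself gives no proof of this corollary at all; it is stated as an immediate consequence of the preceding lemma (that $p:\Delta(E)\to X$ is a $\Delta$-covering whenever $p:E\to X$ is a disk-covering) together with the coreflection property in Proposition~\ref{coreflectionbasicprop}. Your argument simply makes this explicit: you spell out the hom-set bijection, identify it as the identity on underlying functions, and verify the unit/counit and triangle identities. That is exactly the standard way to transport a coreflection $\topz\to\Delta\topz$ to the slice-like categories $\dcovx$ and $\delcovx$, and there is nothing to add.
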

\begin{theorem}\label{naturaliso1}
Suppose $X$ is path-connected and $G=\pionex$. The follow diagram of functors commutes up to natural isomorphism.
\[\xymatrix{
\dcovx \ar[dr]_-{\mu} \ar[rr]^-{\Delta} &&\delcovx  \ar@{^{(}->}[dl]^-{\mu}\\
& G\mathbf{Set}
}\]
\end{theorem}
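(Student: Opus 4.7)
The plan is to show that for each disk-covering $p:E\to X$, the identity function on the underlying set of the fiber provides a $G$-equivariant bijection $\eta_p:\mu(\Delta(p))\to\mu(p)$, and that this family is natural in $p$.

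First, I would observe that $\Delta(E)$ and $E$ share the same underlying set and that $p:\Delta(E)\to X$ agrees pointwise with $p:E\to X$, so the fibers $(\Delta(p))^{-1}(x_0)$ and $p^{-1}(x_0)$ coincide as sets. Define $\eta_p$ to be the identity on this common set; it is trivially a bijection, so the only content is $G$-equivariance.

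For $G$-equivariance, recall that the monodromy action of $[\alpha]\in G=\pionex$ on $e\in p^{-1}(x_0)$ is $[\alpha]\cdot e=\widetilde{\alpha}_e(1)$, where $\widetilde{\alpha}_e$ is the unique lift of $\alpha$ to $E$ starting at $e$. Because $\ui$ is first countable and locally path-connected, Lemma \ref{deltagenpropslemma} shows $\ui\in\Delta\topz$, so a set-function $\gamma:\ui\to E$ is continuous if and only if $\gamma:\ui\to\Delta(E)$ is continuous. Consequently, the unique path lift of $\alpha$ computed in $E$ coincides, as a function $\ui\to p^{-1}(x_0)\cup\ldots$, with the unique lift computed in $\Delta(E)$. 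The two $G$-actions on $p^{-1}(x_0)$ therefore agree, making $\eta_p$ a $G$-equivariant bijection, hence an isomorphism in $G\mathbf{Set}$.

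For naturality, a morphism $f:E\to E'$ in $\dcovx$ is sent by $\Delta$ to $\Delta(f):\Delta(E)\to\Delta(E')$, which as a set-function is identical to $f$. Its restriction to the fiber therefore agrees with the restriction of $f$, so the square
\[\xymatrix{\mu(\Delta(p))\ar[r]^-{\eta_p}\ar[d]_-{\mu(\Delta(f))}&\mu(p)\ar[d]^-{\mu(f)}\\ \mu(\Delta(p'))\ar[r]_-{\eta_{p'}}&\mu(p')}\]
commutes on the nose. The entire argument is routine; the only substantive observation is that path-lifting depends solely on continuous maps out of $\ui$, and since $\ui$ is already $\Delta$-generated, the coreflection $\Delta$ cannot disturb the monodromy data attached to a disk-covering. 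There is thus no real obstacle beyond unpacking the definitions.
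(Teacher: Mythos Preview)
Your proof is correct and follows essentially the same approach as the paper: both define $\eta_p$ via the identity function $\Delta(E)\to E$ and rely on the fact that, since $\ui\in\Delta\topz$, the coreflection does not alter paths or their unique lifts. The only difference is presentational: the paper packages the equivariance check by noting that $id:\Delta(E)\to E$ is a morphism in $\dcovx$ and then applying the already-established functor $\mu$ to it, whereas you verify the equality of the two $G$-actions directly; these amount to the same computation.
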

\begin{proof}
To define the natural isomorphism $\eta:\mu\circ\Delta\to \mu$, we take the component $\eta_p$ of the disk-covering $p:E\to X$ to be the isomorphism of group actions induced by the morphism \[\xymatrix{\Delta(E) \ar[dr]_{p} \ar[rr]^-{id} && E \ar[dl]^{p}\\ & X }\]of disk-coverings under $\mu$. The naturality of $\eta$ is straightforward to verify.
\end{proof}
\begin{remark}\label{weakremark}
We do not wish to give much attention to categories of weak $\mcc$-coverings; however, Theorem \ref{naturaliso1} can be generalized as follows: If $\mathbf{wCov}_{\mcc}(X)$ is the category of weak $\mcc$-coverings over $X$, then the follow diagram of functors commutes up to natural isomorphism where $\psi$ is right adjoint to the inclusion $\hccovx\subset\mathbf{wCov}_{\mcc}(X)$.
\[\xymatrix{
\mathbf{wCov}_{\mcc}(X)\ar[dr]_-{\mu} \ar[rr]^-{\psi} && \hccovx  \ar@{^{(}->}[dl]^-{\mu}\\
& G\mathbf{Set}
}\]
\end{remark}
One should interpret the conclusion of Theorem \ref{naturaliso1} as follows: Any attempt to study the combinatorial structure of the traditional fundamental group $\pionex$ using maps that uniquely lift paths and homotopies of paths can be replaced or generalized by $\Delta$-coverings without losing any information about $\pionex$. Additionally, by Theorem \ref{reflectionequivlemma1}, we have $\delcovx\cong \Delta\mathbf{Cov}(\Delta(X))$ and thus there is also no loss of information by assuming from the start that $X$ is $\Delta$-generated. The subgroup $U(X,\Delta\topz)\subseteq \pionex$ is, in some sense, the group of ``phantom" homotopy class, i.e. those which are invisible to any generalized covering-theoretic approach.

Given a $\Delta$-covering $p:\tX\to X$, it is natural to search for explicit descriptions of the topology of $\tX$. In the following section, we employ the locally path-connected category to provide a simple description of $\tX$ in the case that $X$ is first countable. For a non-first countable, $\Delta$-generated space $X$, it seems that we can only describe the topology of $\tX$ as a final topology with respect to a collection of paths $\ui\to \tX$.
\section{Coverings for locally path-connected spaces: $\lpcz$-coverings}
The second case we consider is when $\hullc$ is the category $\mathbf{lpc}_{0}$ of path-connected, locally path-connected spaces. As discussed in the introduction, the notion of $\mathbf{lpc}_{0}$-covering is a direct generalization of the ``generalized regular coverings" in \cite{FZ07}. $\lpcz$-coverings $p$, which are non-regular in the sense that the image of $p_{\ast}$ is a non-normal subgroup, are known to exist \cite{FZ13}.
\begin{definition}\cite{Dydak11}
Let $(J,\leq )$ be an infinite directed set. The \textit{directed wedge} $(A,a_0)=\bigvee_{J}(X_j,x_j)$ of spaces $(X_j,x_j)$ indexed by $J$ is the based wedge sum with the following topology: $U\subset A\backslash \{a_0\}$ is open if and only if $U\cap X_j$ is open in $X_j$ for each $j\in J$ and $U$ is an open neighborhood of $a_0$ if and only if there is a $k\in J$ such that $X_j\subset U$ for all $j> k$ and $U\cap X_j$ is open in $X_j$ for each $j\in J$. The \textit{arc-hedgehog over} $J$ is the directed wedge $(H(J),a_0)=\bigvee_{J}(\ui_j,0)$.
\end{definition}
If $t\in \ui$, we let $t_j$ denote the image of $t\in \ui_j$ in $H(J)$. Observe that maps $(H(J),a_0)\to (X,x)$ are in bijective correspondence with convergent nets $\alpha_j\to c_{x}$, $j\in J$ (which converge to the constant path) in the path space $P(X)$. Also note that $H(J)$ is path-connected, locally path-connected, and simply-connected. Let $\mathcal{H}$ be the category whose objects include $H(J)$ for every directed set $J$.
\begin{lemma}\label{hedgehogislpc}
$\mathscr{H}(\mathcal{H})= \lpcz$.
\end{lemma}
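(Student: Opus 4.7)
The plan is to verify both containments directly. For $\scrh(\mathcal{H})\subseteq\lpcz$, I would first check that each arc-hedgehog $H(J)$ is locally path-connected: at a point in the interior of an arc $\ui_j$ this is clear, and at the wedge point $a_0$ any open neighborhood contains a basic open set consisting of the full arc $\ui_j$ for all $j$ beyond some $k\in J$ together with an initial interval $[0,\epsilon_j)$ of $\ui_j$ for each remaining $j\leq k$; this basic set contains $a_0$, is open by the definition of the topology on $H(J)$, and is path-connected since $a_0$ lies in every piece. Since $\lpcz$ is closed under topological sums and under quotient maps, the coreflective hull $\scrh(\mathcal{H})$ is contained in $\lpcz$.

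For the reverse containment, I would realize every $X\in\lpcz$ as a quotient of a topological sum of arc-hedgehogs. Form $S=\bigsqcup H(J)$ indexed by all continuous maps $f:H(J)\to X$ (with $J$ an infinite directed set) and consider the canonical evaluation map $e:S\to X$. Surjectivity follows from path-connectedness: any path from a fixed basepoint to $x\in X$ extends to a continuous map $H(\bbn)\to X$ by filling the remaining sheets with the constant path at the basepoint, which is continuous since constant paths trivially converge in $P(X)$.

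The core step is showing $e$ is a quotient map. Suppose $U\subseteq X$ satisfies $e^{-1}(U)$ open in $S$ and fix $x\in U$. If no path-connected open neighborhood $V$ of $x$ were contained in $U$, I would choose for each such $V$ a point $y_V\in V\setminus U$ and a path $\alpha_V:\ui\to V$ from $x$ to $y_V$, and assemble these into a map $f:H(J)\to X$, where $J$ is the set of path-connected open neighborhoods of $x$ under reverse inclusion. Continuity of $f$ at $a_0$ holds because $\alpha_V(\ui)\subseteq V$ forces the family $\{\alpha_V\}$ to converge to $c_x$ in $P(X)$. Then openness of $f^{-1}(U)$ at $a_0$ produces some $V_0\in J$ such that $\alpha_V(\ui)\subseteq U$ for every $V$ strictly refining $V_0$, which forces $y_V\in U$ and contradicts the choice $y_V\notin U$. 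Hence some $V\subseteq U$, so $U$ is open in $X$.

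The hardest part will be the continuity verification for $f$ at $a_0$ together with confirming that $J$ is cofinally infinite, so that the neighborhood condition at $a_0$ in $H(J)$ yields a genuine contradiction rather than holding vacuously. This is precisely where local path-connectedness is essential: it provides a basis of path-connected neighborhoods and lets each $\alpha_V$ be chosen within $V$, which is exactly what gives convergence to the constant path. Minor degenerate cases---when the relevant directed set is finite or has a maximum---can be handled by padding with $\bbn$, or by observing that if the intersection of all path-connected neighborhoods of $x$ is itself a neighborhood, a simpler argument using a single path repeated on every sheet of $H(\bbn)$ delivers the same contradiction.
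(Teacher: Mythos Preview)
Your proposal is correct and follows essentially the same route as the paper: the reverse inclusion is established by the identical contradiction argument, assembling paths chosen inside a directed base of path-connected neighborhoods into a single map from an arc-hedgehog and then reading off the contradiction from openness of the preimage at $a_0$. The only difference is cosmetic---you package it as exhibiting an explicit quotient map $e:S\to X$ while the paper verifies the final-topology characterization of $\scrh(\mathcal{H})$ directly---and you additionally address the degenerate case where the neighborhood base is finite or has a maximum, which the paper leaves implicit.
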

\begin{proof}
Since every quotient of a topological sum of locally path-connected spaces is locally path-connected, it is clear that $\mathscr{H}(\mathcal{H})\subseteq \lpcz$. Suppose $X$ is path-connected and locally path-connected and suppose there is a set $U\subseteq X$ such that $\epsilon^{-1}(U)$ is open for every map $\epsilon:H(J)\to X$ from some arc-hedgehog. We check that $U$ is open in $X$. If $U$ is not open, then there exists $x\in U$ such that no neighborhood of $x$ is contained in $U$. Let $\{V_j|j\in J\}$ be a base of path-connected connected neighborhoods at $x$, where $J$ is a directed set. For every $j\in J$, there is a point $x_j\in V_j\backslash U$. Pick a path $\epsilon_j:\ui\to V_j$ from $x$ to $x_j$. Together these paths induce a unique map $\epsilon:(H(J),a_0)\to (X,x)$ such that the restriction to $\ui_j$ is $\epsilon_j$. Since $\epsilon$ is continuous $\epsilon^{-1}(U)$ is an open neighborhood of $a_0$ in $H(J)$. Thus there is a $k\in J$ such that $\ui_j\subseteq \epsilon^{-1}(U)$ for all $j>k$. Thus $\epsilon_j(1)=x_j\in U$ for all $j>k$ which is a contradiction. Thus $U$ must be open in $X$.
\end{proof}
The coreflection functor $\topz\to\mathscr{H}(\mathcal{H})$ is well-known \cite{BDLM08,Dydak11,FZ07}; we denote it by $lpc:\topz\to\lpcz$. A convenient description of the coreflection is the following: $lpc(X)$ has the same underlying set as $X$ and has topology generated by the path-components of the open sets of $X$. Since $D^2$ is not an object of $\mathcal{H}$, apparently, we cannot apply much of the above theory to $\mathcal{H}$ itself. However, it is certainly true that $D^2\in \lpcz$. Since $\scrh(\lpcz)=\lpcz$, the results in Section 2 apply to $\lpcz$ when we view it as the coreflective hull of $\mathcal{H}$ with $D^2$ added as an object. Definition \ref{ccovdef} translates as follows.
\begin{definition}\label{lpccovdef}
A map $p:\tX\to X$ is a $\lpcz$\textit{-covering map} if
\begin{enumerate}
\item $\tX$ is path-connected and locally path-connected,
\item for every path-connected, locally path-connected space $Y$, point $\tx\in\tX$, and based map $f:(Y,y)\to (X,p(\tx))$ such that $f_{\ast}(\pi_1(Y,y))\subset p_{\ast}(\pi_1(\tX,\tx))$, there is a unique map $\wt{f}:(Y,y)\to (\tX,\tx)$ such that $p\circ \wt{f}=f$.
\end{enumerate}
\end{definition}
\begin{example}
Suppose $X$ is semilocally simply connected and locally path-connected. Certainly every covering (in the classical sense) over $X$ is a $\lpcz$-covering. Thus the traditional classification of covering spaces guarantees that every subgroup of $X$ is a $\lpcz$-covering subgroup. As a consequence, the category $\mathbf{Cov}_{\lpcz}(X)$ of $\lpcz$-coverings is precisely the usual category of connected coverings over $X$.
\end{example}
\begin{example}
It is known that if $X$ is a metric space with Lebesgue dimension equal to $1$ or if $X$ embeds in $\mathbb{R}^2$, then $X$ admits a universal $\lpcz$-covering, or equivalently $U(X,\lpcz)=1$ \cite{FZ07}. Thus spaces such as the Hawaiian earring, Menger curve, and Sierpinski triangle all admit a universal $\lpcz$-covering which, for many practical purposes, is a suitable replacement for a traditional universal covering.
\end{example}
We now seek to identify the topology of $\lpcz$-covering spaces. The authors of \cite{FZ07} show that the so-called ``whisker topology" construction, used in classical covering theory \cite{Spanier66}, provides a candidate one might use to construct $\lpcz$-coverings. We confirm that all $\lpcz$-coverings arise from the whisker topology.

Fix $(X,x_0)\in\btopz$ and a subgroup $H\subseteq\pionex$. Let $\tXh=\pxxo/\sim$ where $\alpha\sim \beta$ if and only if $\alpha(1)=\beta(1)$ and $[\alpha\cdot\beta^{-}]\in H$. The equivalence class of $\alpha$ is denoted $[\alpha]_H$. We give $\tXh$ the \textit{whisker topology}, which is generated by the sets $$B([\alpha]_H,U)=\left\{[\alpha\cdot\epsilon]_H|\epsilon([0,1])\subseteq U\right\}$$ where $U$ is an open neighborhood of $\alpha(1)$. Note that if $[\beta]_H\in B([\alpha]_H,U)$, then $B([\alpha]_H,U)=B([\beta]_H,U)$. In the case that $H=1$ is the trivial subgroup, we just write $\tX$. The basepoint of $\tXh$ is taken to be class $\txh=[c_{x_0}]_H$ of the constant path $c_{x_0}$ at $x_0$. The map $p_H:\tXh\to X$ is defined to be the endpoint projection $p_H([\alpha]_H)=\alpha(1)$.

When $X$ is locally path-connected and semilocally simply-connected, $p_H:\tXh\to X$, $p_H(\txh)=x_0$ is the classical covering space of $X$ which corresponds to the subgroup $H\subseteq \pionex$ under the usual classification of coverings \cite{Spanier66}. When $X$ is not semilocally simply-connected, $p_H:\tXh\to X$ may not be a covering map but still has the potential to be a $\lpcz$-covering map. The general properties of $p_H$ are studied in great detail in \cite{FZ07}.

\begin{lemma}\cite{FZ07}
The space $\tXh$ is path-connected and locally path-connected. The endpoint projection $p_H:\tXh\to X$ is a continuous surjection. If $X$ is locally path-connected, then $p_H:\tXh\to X$, $p_H([\alpha]_H)=\alpha(1)$ is an open map.
\end{lemma}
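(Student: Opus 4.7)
The plan is to verify each assertion by working directly with the basis $\{B([\alpha]_H, U)\}$ for the whisker topology, exploiting the fact that these neighborhoods encode concatenation of a fixed path $\alpha$ with small tails.

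For path-connectedness, given $[\alpha]_H \in \tXh$ I would define the canonical lift $\widehat{\alpha}: \ui \to \tXh$ by $\widehat{\alpha}(t) = [\alpha_t]_H$, where $\alpha_t(s) = \alpha(ts)$. To verify continuity at $t_0$, I take a basic neighborhood $B([\alpha_{t_0}]_H, U)$ with $U$ open about $\alpha(t_0)$ and use continuity of $\alpha$ to find $\delta > 0$ with $\alpha([t_0-\delta, t_0+\delta] \cap \ui) \subseteq U$. For $t$ in this interval, $\alpha_t$ is a reparametrization of $\alpha_{t_0} \cdot \epsilon$, where $\epsilon$ is the sub-path of $\alpha$ between parameters $t_0$ and $t$; since $\epsilon$ has image in $U$, this gives $\widehat{\alpha}(t) \in B([\alpha_{t_0}]_H, U)$. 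Thus $\widehat{\alpha}$ is a continuous path from $\txh = [c_{x_0}]_H$ to $[\alpha]_H$, proving path-connectedness. The same reparametrization trick shows that for any $[\alpha \cdot \epsilon]_H \in B([\alpha]_H, U)$, the path $t \mapsto [\alpha \cdot \epsilon_t]_H$ with $\epsilon_t(s) = \epsilon(ts)$ stays inside $B([\alpha]_H, U)$ and connects $[\alpha]_H$ to $[\alpha \cdot \epsilon]_H$; consequently each basic neighborhood is path-connected, which establishes local path-connectedness of $\tXh$.

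Continuity and surjectivity of $p_H$ are immediate: for open $U \subseteq X$, if $[\alpha]_H \in p_H^{-1}(U)$ then $B([\alpha]_H, U) \subseteq p_H^{-1}(U)$, and surjectivity follows since $X$ is path-connected. For the openness claim under the assumption that $X$ is locally path-connected, it suffices to show that $p_H(B([\alpha]_H, U))$ is open in $X$. I would identify this image with the path component $V$ of $\alpha(1)$ inside $U$: the inclusion $\subseteq$ is clear since every $[\alpha \cdot \epsilon]_H \in B([\alpha]_H, U)$ has endpoint $\epsilon(1) \in V$, and for $\supseteq$, any $x \in V$ admits a path $\epsilon$ from $\alpha(1)$ to $x$ inside $U$, yielding $[\alpha \cdot \epsilon]_H \in B([\alpha]_H, U)$ with $p_H([\alpha \cdot \epsilon]_H) = x$. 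Because $X$ is locally path-connected, path components of open sets are open, so $V$ is open in $X$.

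I expect no substantial obstacle; every step is a direct unpacking of the definitions. The one place requiring a moment of care is the continuity of the canonical lift $\widehat{\alpha}$, where one must recognize that $\alpha_t$ and $\alpha_{t_0} \cdot \epsilon$ define the same class in $\tXh$ simply because they are reparametrizations of one another, and this identification holds independently of the choice of $H$.
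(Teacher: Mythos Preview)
Your argument is correct and is essentially the standard proof one finds in \cite{FZ07}; note that the present paper does not supply its own proof of this lemma but simply cites that reference, so there is nothing further to compare against. Every step you outline---the continuity of the standard lift $t\mapsto[\alpha_t]_H$, path-connectedness of the basic sets $B([\alpha]_H,U)$, and the identification of $p_H(B([\alpha]_H,U))$ with the path component of $\alpha(1)$ in $U$---is exactly the expected unpacking of the whisker topology.
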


An important feature of the whisker topology is that many maps $f:(Y,y)\to (X,x)$ are guaranteed to have at least one lift to $\tXh$. The following result is extracted from classical covering space theory.

\begin{lemma}\cite[Lemma 2.4]{FZ07}\label{liftsexist} Suppose $Y$ is path-connected and locally path-connected, $y_0\in Y$, $\alpha\in P(X,x_0)$, and $f:(Y,y_0)\to (X,\alpha(1))$ is a map such that $f_{\ast}(\pi_1(Y,y_0))\subseteq [\alpha^{-}]H[\alpha]$. Then there is a continuous map $\wt{f}:(Y,y_0)\to (\tXh,[\alpha]_H)$ such that $p_H\circ \wt{f}=f$ defined by $\wt{f}(y)=[\alpha\cdot (f\circ \tau)]_H$ where $\tau:\ui\to Y$ is any path from $y_0$ to $y$.
\end{lemma}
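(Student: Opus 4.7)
The plan is to verify three things in sequence: that $\wt{f}$ is well-defined as a function, that it satisfies the two stated conditions $p_H\circ\wt{f}=f$ and $\wt{f}(y_0)=[\alpha]_H$, and finally that it is continuous. The subgroup-containment hypothesis $f_{\ast}(\pi_1(Y,y_0))\subseteq [\alpha^{-}]H[\alpha]$ is exactly what is needed to make the definition independent of the choice of $\tau$.

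For well-definedness, suppose $\tau_1,\tau_2:\ui\to Y$ are two paths from $y_0$ to $y$. Both concatenations $\alpha\cdot (f\circ\tau_i)$ end at $f(y)$, so the equivalence under $\sim$ amounts to showing that the loop $\alpha\cdot(f\circ\tau_1)\cdot (f\circ\tau_2)^{-}\cdot\alpha^{-}$ lies in $H$. But $\tau_1\cdot\tau_2^{-}$ is a loop in $Y$ based at $y_0$, so $[f\circ\tau_1\cdot (f\circ\tau_2)^{-}]\in f_{\ast}(\pi_1(Y,y_0))\subseteq [\alpha^{-}]H[\alpha]$. Conjugating back by $[\alpha]$ places the loop in $H$, as required. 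The identity $p_H\circ\wt{f}=f$ is immediate from $p_H([\alpha\cdot(f\circ\tau)]_H)=f(\tau(1))=f(y)$, and taking $\tau=c_{y_0}$ gives $\wt{f}(y_0)=[\alpha\cdot c_{f(y_0)}]_H=[\alpha]_H$.

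For continuity, I would argue pointwise on basic open sets. Fix $y\in Y$ and a basic neighborhood $B([\beta]_H,U)$ of $\wt{f}(y)$; by replacing $\beta$ by a concatenation $\alpha\cdot(f\circ\tau)$ for some chosen path $\tau$ from $y_0$ to $y$, we may assume $[\beta]_H=\wt{f}(y)$. Then $U$ is an open neighborhood of $\beta(1)=f(y)$, so $f^{-1}(U)$ is open in $Y$, and by local path-connectedness of $Y$ there is a path-connected open $V$ with $y\in V\subseteq f^{-1}(U)$. For any $y'\in V$, choose a path $\sigma$ from $y$ to $y'$ inside $V$; concatenation gives a path $\tau\cdot\sigma$ from $y_0$ to $y'$, and by definition
\[\wt{f}(y')=[\alpha\cdot (f\circ(\tau\cdot\sigma))]_H=[\beta\cdot (f\circ\sigma)]_H.\]
Since $f\circ\sigma$ has image in $U$, this class lies in $B([\beta]_H,U)$, so $V\subseteq\wt{f}^{-1}(B([\beta]_H,U))$.

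The only genuinely delicate step is the well-definedness computation, since it is the place where the subgroup hypothesis is consumed; everything else is bookkeeping with concatenations. The rest of the argument is a standard use of the locally-path-connected structure of $Y$ to convert continuity of $f$ into a path-based continuity statement for $\wt{f}$.
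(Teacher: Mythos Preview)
Your argument is correct and is precisely the classical lifting construction: well-definedness from the subgroup hypothesis via conjugation, and continuity by pulling back a basic whisker neighborhood to a path-connected open set in $Y$. The paper does not supply its own proof of this lemma; it cites \cite[Lemma 2.4]{FZ07}, whose argument is the same one you have written out.
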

%
%
%
The lift $\wt{f}:(Y,y_0)\to(\tXh,\txh)$ of a map $f:(Y,y_0)\to(X,x_0)$ defined in Lemma \ref{liftsexist} is called the \textit{standard lift of }$f$. In particular, every path $\alpha:(\ui,0)\to (X,x_0)$ has a standard lift $\wt{\alpha}_{\mathscr{S}}:\ui\to\tXh$ starting at $\txh$ defined as $\wt{\alpha}_{\mathscr{S}}(t)=[\alpha_t]_H$ where $\alpha_t(s)=\alpha(st)$.
\begin{corollary} \label{hinclusion}
$H\subseteq (p_H)_{\ast}\left(\pi_1(\tXh,\txh)\right)$.
\end{corollary}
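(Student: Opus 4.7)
The plan is to produce, for each $[\alpha] \in H$, an explicit loop in $\tXh$ based at $\txh$ whose image under $p_H$ is $\alpha$. The obvious candidate is the standard lift $\wt{\alpha}_{\mathscr{S}}$ of $\alpha$ defined in the paragraph preceding the corollary, so essentially all that needs to be verified is that this lift closes up into a loop precisely when $[\alpha]$ lies in $H$.

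More concretely, fix $[\alpha] \in H$, so $\alpha \in \Omega(X,x_0)$. Applying Lemma \ref{liftsexist} to the identity map $f = \alpha : (\ui, 0) \to (X, x_0)$, with the trivial path $c_{x_0}$ serving as the initial whisker, yields the standard lift $\wt{\alpha}_{\mathscr{S}} : \ui \to \tXh$ with $\wt{\alpha}_{\mathscr{S}}(t) = [\alpha_t]_H$, where $\alpha_t(s) = \alpha(st)$. By construction, $p_H \circ \wt{\alpha}_{\mathscr{S}} = \alpha$ and $\wt{\alpha}_{\mathscr{S}}(0) = [c_{x_0}]_H = \txh$.

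The one point to check is that $\wt{\alpha}_{\mathscr{S}}(1) = \txh$ as well. But $\wt{\alpha}_{\mathscr{S}}(1) = [\alpha_1]_H = [\alpha]_H$, and by the definition of the equivalence relation $\sim$ on $P(X,x_0)$, we have $[\alpha]_H = [c_{x_0}]_H$ iff $\alpha(1) = x_0$ and $[\alpha \cdot c_{x_0}^-] = [\alpha] \in H$; both conditions hold by hypothesis. Consequently $\wt{\alpha}_{\mathscr{S}} \in \Omega(\tXh, \txh)$ and $(p_H)_\ast([\wt{\alpha}_{\mathscr{S}}]) = [\alpha]$.

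The argument is essentially a bookkeeping exercise: the standard lift construction is already in hand from Lemma \ref{liftsexist}, and no genuine obstacle arises — the corollary is really a direct consequence of unwinding the definition of $\sim$ against the endpoint of $\wt{\alpha}_{\mathscr{S}}$. The only mild subtlety worth flagging is to justify invoking Lemma \ref{liftsexist} with $Y = \ui$; the hypothesis $f_\ast(\pi_1(\ui, 0)) \subseteq [c_{x_0}^-] H [c_{x_0}]$ is vacuous since $\pi_1(\ui, 0) = 1$, so the lemma applies without incident.
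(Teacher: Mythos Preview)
Your proof is correct and follows exactly the same approach as the paper: for $[\alpha]\in H$ you verify that the standard lift $\wt{\alpha}_{\mathscr{S}}$ is a loop at $\txh$ by computing $\wt{\alpha}_{\mathscr{S}}(1)=[\alpha]_H=\txh$, whence $(p_H)_{\ast}([\wt{\alpha}_{\mathscr{S}}])=[\alpha]$. The paper's proof is a one-line version of yours; your added justification for invoking Lemma~\ref{liftsexist} with $Y=\ui$ is fine but not strictly necessary, since the standard lift of a path was defined directly in the paragraph preceding the corollary.
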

\begin{proof}
If $[\alpha]\in H$, then the standard lift $\wt{\alpha}_{\mathscr{S}}$ is a loop based at $\txh$ since $\wt{\alpha}_{\mathscr{S}}(1)=[\alpha_1]_H=[\alpha]_H=\txh$. Thus $(p_H)_{\ast}\left(\left[\wt{\alpha}_{\mathscr{S}}\right]\right)=[\alpha]$.
\end{proof}
\begin{lemma}\label{genunivcov}
For any subgroup $H\subseteq\pionex$, the following are equivalent:
\begin{enumerate}
\item $p_H:\tXh\to X$ is a $\lpcz$-covering,
\item $p_H:\tXh\to X$ has the unique path lifting property,
\item $(p_H)_{\ast}\left(\pi_1(\tXh,\txh)\right)=H$.
\end{enumerate}
\end{lemma}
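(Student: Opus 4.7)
The plan is to establish the cyclic chain $(1) \Rightarrow (2) \Leftrightarrow (3) \Rightarrow (1)$, yielding the three-way equivalence.

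I would begin with $(1) \Rightarrow (2)$, which is immediate: since $D^2 \in \lpcz$, every $\lpcz$-covering is a disk-covering, and every disk-covering has the unique path lifting property because $\ui$ is a retract of $D^2$.

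For $(2) \Leftrightarrow (3)$, Corollary \ref{hinclusion} already yields $H \subseteq (p_H)_{\ast}(\pi_1(\tXh,\txh))$, so only the reverse inclusion is at issue in either direction. Under (2), any $[\alpha] \in (p_H)_{\ast}(\pi_1(\tXh,\txh))$ is witnessed by some loop $\wt{\alpha}$ at $\txh$, which by UPL must coincide with the standard lift $\wt{\alpha}_{\mathscr{S}}$ from Lemma \ref{liftsexist}; then $\wt{\alpha}_{\mathscr{S}}(1) = \txh = [c_{x_0}]_H$ forces $[\alpha] \in H$. The harder direction $(3) \Rightarrow (2)$ is the main technical obstacle. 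Given a lift $\beta:\ui\to\tXh$ of a path $\alpha$ at $\txh$, I would write $\beta(1) = [\gamma]_H$ and apply Lemma \ref{liftsexist} (with $Y = \ui$, $f = \gamma^{-}$, and outer path $\gamma$) to form the standard lift of $\gamma^{-}$ starting at $[\gamma]_H$; this is a continuous path from $[\gamma]_H$ back to $\txh$. Concatenating $\beta$ with this standard lift yields a loop at $\txh$ whose image under $p_H$ is $\alpha\cdot\gamma^{-}$. By (3), $[\alpha\cdot\gamma^{-}] \in H$, equivalently $[\gamma\cdot\alpha^{-}] \in H$, so $\beta(1) = [\gamma]_H = [\alpha]_H = \wt{\alpha}_{\mathscr{S}}(1)$. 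Applying the identical argument to the restriction $\beta|_{[0,t]}$ for each $t$ recovers $\beta(t) = \wt{\alpha}_{\mathscr{S}}(t)$ pointwise; lifts at a general starting point $\tx = [\kappa]_H$ reduce to the basepoint case by pre-concatenating with the standard lift of $\kappa$.

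Finally, for $(3) \Rightarrow (1)$, the preceding lemma already guarantees that $\tXh$ is path-connected and locally path-connected, so only the unique lifting condition of Definition \ref{lpccovdef} requires verification. For $f:(Y,y_0)\to (X,p_H(\ty))$ with $Y \in \lpcz$ and $\ty = [\alpha]_H$, a standard change-of-basepoint computation using (3) and path-connectedness of $\tXh$ gives $(p_H)_{\ast}(\pi_1(\tXh,\ty)) = [\alpha^{-}]H[\alpha]$, so Lemma \ref{liftsexist} produces a continuous lift $\wt{f}$. Uniqueness of $\wt{f}$ reduces to UPL, now available from $(3) \Rightarrow (2)$: two lifts agreeing at $y_0$ restrict along any path in $Y$ from $y_0$ to two path lifts in $\tXh$ sharing a starting point, which must coincide.
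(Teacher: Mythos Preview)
Your proposal is correct and follows essentially the same route as the paper: the implications are organized the same way, the $(3)\Rightarrow(2)$ step uses the identical loop $\beta\cdot(\wt{\gamma}_{\mathscr{S}})^{-}$ (your invocation of Lemma~\ref{liftsexist} with $f=\gamma^{-}$ just reconstructs $(\wt{\gamma}_{\mathscr{S}})^{-}$), and the $(3)\Rightarrow(1)$ step is the paper's $(2)\,\&\,(3)\Rightarrow(1)$ with UPL supplied via $(3)\Rightarrow(2)$. You are slightly more explicit than the paper about reducing to restrictions $\beta|_{[0,t]}$ and to general basepoints, but these are exactly the routine reductions the paper leaves implicit.
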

\begin{proof}
(1) $\Rightarrow$ (2) is obvious. (2) $\Rightarrow$ (3) Suppose $p_H:\tXh\to X$ has the unique path lifting property. By Corollary \ref{hinclusion}, one inclusion is clear. Suppose $\gamma:\ui\to\tXh$ is a loop based at $\txh$. Let $\alpha=p_H\circ \gamma$. By uniqueness of path lifting, $\gamma=\wt{\alpha}_{\mathscr{S}}$. In particular, $\wt{\alpha}_{\mathscr{S}}(1)=[\alpha]_H=\txh$, which implies $(p_H)_{\ast}([\gamma])=[\alpha]\in H$.

(3) $\Rightarrow$ (2) Suppose $(p_H)_{\ast}\left(\pi_1(\tXh,\txh)\right)=H$. Suppose $\alpha\in P(X,x_0)$ is a path such that there is a lift $\beta:\ui\to\tXh$, $\beta(0)$ such that $\beta(1)\neq \wt{\alpha}_{\mathscr{S}}(1)=[\alpha]_H$. Let $\beta(t)=[\beta_t]_H$ and $\gamma=\beta_1$ and note $\beta\cdot\left(\wt{\gamma}_{\mathscr{S}}\right)^{-}$ is a well-defined loop based at $\txh$. By assumption, $[\alpha\cdot\beta_{1}^{-}] =(p_H)_{\ast}\left(\left[\beta\cdot\left(\wt{\gamma}_{\mathscr{S}}\right)^{-}\right]\right)\in H$. Thus $\beta(1)=[\beta_1]_{H}=[\alpha]_H=\alpha(1)$ which is a contradiction.

(2) \& (3) $\Rightarrow$ (1). Let $Y\in\lpcz$, $y\in Y$, $\alpha\in P(X,x_0)$, and $f:(Y,y)\to (X,x)$ such that $\alpha(1)=x$ and $f_{\ast}(\pi_1(Y,y))\subseteq (p_H)_{\ast}(\pi_1(\tXh,[\alpha]_H))$. We have $$(p_H)_{\ast}(\pi_1(\tXh,[\alpha]_H)) =[\alpha^{-}](p_H)_{\ast}(\pi_1(\tXh,\txh))[\alpha]=[\alpha^{-}]H[\alpha].$$
By Lemma \ref{liftsexist}, there is a map $\wt{f}:(Y,y)\to(\tXh,[\alpha]_H)$ such that $p_H\circ \wt{f}=f$. Since $p_H$ has the unique path lifting property and $Y$ is path-connected, $\wt{f}$ is unique.
\end{proof}
We now verify that these are the \textit{only} candidates for $\lpcz$-covering maps.
\begin{lemma}\label{topologyofgencovlemma}
Suppose $p:(\hX,\hx)\to (X,x_0)$ is a based $\lpcz$-covering and $p_{\ast}(\pi_1(\hX,\hx))=H$. Then there is a homeomorphism $\wt{p}:(\hX,\hx)\to (\tXh,\txh)$ such that $p_H\circ \wt{p}=p$.
\end{lemma}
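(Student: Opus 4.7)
My plan is to take $\widetilde{p}$ to be the standard lift delivered by Lemma \ref{liftsexist}, verify directly that it is a continuous bijection with $p_H\circ \widetilde{p}=p$, and then establish openness by showing it sends a natural basis of $\hX$ onto the standard whisker basis of $\tXh$.

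Applying Lemma \ref{liftsexist} with $Y=\hX$, base path $\alpha=c_{x_0}$, and $f=p$ (the subgroup condition collapses to $p_\ast(\pi_1(\hX,\hx))=H\subseteq H$) produces a continuous map $\widetilde{p}:(\hX,\hx)\to (\tXh,\txh)$ given by $\widetilde{p}(\hx')=[p\circ \tau]_H$ for any path $\tau$ from $\hx$ to $\hx'$, satisfying $p_H\circ \widetilde{p}=p$. For surjectivity, given $[\alpha]_H\in\tXh$, the disk-covering property of $p$ yields a unique lift $\widetilde{\alpha}:\ui\to \hX$ of $\alpha$ starting at $\hx$, and $\widetilde{p}(\widetilde{\alpha}(1))=[\alpha]_H$. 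For injectivity, if $\widetilde{p}(\hx_1)=\widetilde{p}(\hx_2)$ with paths $\hat{\gamma}_i$ from $\hx$ to $\hx_i$, the loop $(p\circ\hat{\gamma}_1)\cdot(p\circ\hat{\gamma}_2)^-$ lies in $H=p_\ast(\pi_1(\hX,\hx))$ and so is path-homotopic to $p\circ\hat{\delta}$ for some $\hat{\delta}\in\Omega(\hX,\hx)$; lifting this homotopy through the disk-covering $p$ and using that $c_{x_0}$ has only the constant lift at $\hx$ forces the based lifts $\hat{\gamma}_1,\hat{\gamma}_2$ to share endpoints, so $\hx_1=\hx_2$.

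For openness, I first observe that for any open $U\subseteq X$ and any path-component $B$ of $p^{-1}(U)$ (open in $\hX$ by local path-connectedness), $\widetilde{p}$ restricts to a bijection $B\to B([p\circ\hat{\gamma}]_H,U)$, where $\hat{\gamma}$ is a chosen path from $\hx$ into $B$: the inclusion $\widetilde{p}(B)\subseteq B([p\circ\hat{\gamma}]_H,U)$ is immediate from path concatenation inside $B$, while the reverse inclusion uses that the unique $p$-lift of any path $\epsilon\subseteq U$ starting at a point of $B$ lies entirely in $p^{-1}(U)$ and hence in $B$. It therefore suffices to prove that these path-components of preimages form a basis for the topology of $\hX$.

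I would establish this basis claim by contradiction using the directed arc-hedgehog construction underlying Lemma \ref{hedgehogislpc}. Suppose some open $W\ni \hx'$ in $\hX$ admits no such basic neighborhood, so that for every open $U\ni p(\hx')$ in $X$ the path-component $B_U$ of $p^{-1}(U)$ containing $\hx'$ exits $W$. Index the open neighborhoods $\mathcal{U}$ of $p(\hx')$ by reverse inclusion, choose paths $\hat{\mu}_U$ in $B_U$ from $\hx'$ to points $\hat{y}_U\in B_U\setminus W$, and assemble the composites $p\circ \hat{\mu}_U$ into a map $f:H(\mathcal{U})\to X$ from the arc-hedgehog; continuity of $f$ at $a_0$ holds because for any open $W'\ni p(\hx')$ and any $U_j\in\mathcal{U}$ with $U_j\subseteq W'$ the spoke satisfies $p\circ\hat{\mu}_j(\ui)\subseteq U_j\subseteq W'$, so all cofinally refined spokes project into $W'$. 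Since $H(\mathcal{U})$ is simply connected and in $\lpcz$, the $\lpcz$-covering property of $p$ produces a unique lift $\widetilde{f}:H(\mathcal{U})\to \hX$ with $\widetilde{f}(a_0)=\hx'$, which by unique path lifting along each spoke satisfies $\widetilde{f}(1_U)=\hat{y}_U\notin W$; but then the open set $\widetilde{f}^{-1}(W)$, being a neighborhood of $a_0$ in $H(\mathcal{U})$, would need to contain some spoke $\ui_j$ entirely for cofinally many $j$, contradicting $\hat{y}_j\notin W$. This arc-hedgehog lifting is the main technical step and is precisely where both $\hX\in\lpcz$ and the full $\lpcz$-covering hypothesis on $p$ are needed.
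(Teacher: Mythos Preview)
Your proof is correct and follows essentially the same strategy as the paper: construct $\widetilde{p}$ as the standard lift from Lemma \ref{liftsexist}, verify bijectivity via unique path lifting, and establish openness through a contradiction that hinges on uniquely lifting an arc-hedgehog map $H(J)\to X$ into $\hX$. The only organizational difference is that you first isolate the auxiliary claim that path-components of the sets $p^{-1}(U)$ form a basis for $\hX$ (and show these map bijectively onto whisker balls), whereas the paper argues openness of $\widetilde{p}(V)$ directly for an arbitrary open $V\subseteq\hX$; the arc-hedgehog lifting is the common crux in both.
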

\begin{proof}
Since $p_{\ast}(\pi_1(\hX,\hx))=H\subseteq (p_H)_{\ast}(\pi_1(\tXh,\txh))$ (recall Corollary \ref{hinclusion}), there is by Lemma \ref{liftsexist} a continuous standard lift $\wt{p}:(\hX,\hx)\to (\tXh,\txh)$ such that $p_H\circ \wt{p}=p$. Suppose $[\alpha]_H\in \tXh$ and $\wt{\alpha}:(\ui,0)\to (\hX,\hx)$ is the unique lift of $\alpha$ such that $p\circ \wt{\alpha}_{\hx}=\alpha$. By definition of the standard lift, $\wt{p}(\wt{\alpha}_{\ox}(1))=[\alpha]_H$. Thus $\wt{p}$ is surjective.

Suppose $\tau_1,\tau_2:(\ui,0)\to (\hX,\hx)$ are paths such that $\wt{p}(\tau_1(1))=[p\circ \tau_1]_H=[p\circ \tau_2]_H=\wt{p}(\tau_2(1))$. Then $$p_{\ast}([\tau_1\cdot\tau_{2}^{-}])=[(p\circ \tau_1)\cdot (p\circ \tau_2)^{-}]\in H=p_{\ast}(\pi_1(\hX,\hx)).$$ By unique path lifting of $p$, it follows that $\tau_1(1)=\tau_2(1)$ and thus $\wt{p}$ is injective.

It suffices to check that $\wt{p}$ is an open map. To obtain a contradiction, suppose $V$ is an open neighborhood in $\hX$ such that $\wt{p}(V)$ is not open in $\tXh$. Pick a path $\beta:(\ui,0)\to (\hX,\hx)$ such that $\beta(1)\in V$ and no neighborhood of $\wt{p}(\beta(1))=[p\circ \beta]_H$ is contained in $\wt{p}(V)$. Let $\{U_j|j\in J\}$ be a neighborhood base at $x=p(\beta(1))$, which is directed by inclusion ($j\leq k$ if and only if $U_k\subseteq U_j$). Since $\{B([p\circ \beta]_H,U_j)|j\in J\}$ is a neighborhood base at $[p\circ \beta]_H$, there is a path $\epsilon_j:(\ui,0)\to (U_j,x)$ such that $[(p\circ \beta)\cdot \epsilon_j]_H\in B([p\circ \beta]_H,U_j)\backslash \wt{p}(V)$ for each $j\in J$. Let $x_j=\epsilon_j(1)\in U_j$.

The paths $\epsilon_j$ combine to give a map $\epsilon:(H(J),a_0)\to (X,x)$ on the arc-hedgehog. Since $H(J)$ is path-connected, locally path-connected and simply-connected, there is a unique lift $\widehat{\epsilon}:(H(J),a_0)\to (\hX,\beta(1))$ (such that $p\circ \widehat{\epsilon}=\epsilon$). Since $V$ is an open neighborhood of $\beta(1)$, there is a $k\in J$ such that $\widehat{\epsilon}$ maps the $k$-th arc of $H(J)$ in to $V$, i.e. $\widehat{\epsilon}(\ui_k)\subset V$. If $\widehat{\epsilon}_k$ is the path which is the restriction of $\widehat{\epsilon}$ to $\ui_k$, then $\widehat{\epsilon}_k(1)\in V$ and $p\circ \widehat{\epsilon}_k=\epsilon_k$. But this implies \[[(p\circ \beta)\cdot\epsilon_k]_H=[(p\circ \beta)\cdot (p\circ\widehat{\epsilon}_{k})]_H=[p\circ (\beta\cdot\widehat{\epsilon}_{k})]_H=\wt{p}(\beta\cdot\widehat{\epsilon}_{k}(1))\in \wt{p}(V)\] which is a contradiction.
\end{proof}
Combining Lemmas \ref{genunivcov} and \ref{topologyofgencovlemma}, we obtain the following theorem which, for the locally path-connected category, answers the Structure Question posed in the introduction: the topology of any $\lpcz$-covering space is equivalent to the whisker topology.
\begin{theorem}\label{equivalence1}
For any subgroup $H\subseteq \pionex$, the following are equivalent:
\begin{enumerate}
\item $H$ is a $\lpcz$-covering subgroup of $\pionex$,
\item $p_H:\tXh\to X$ is a $\lpcz$-covering,
\item $p_H:\tXh\to X$ has the unique path lifting property,
\item $(p_H)_{\ast}\left(\pi_1(\tXh,\txh)\right)=H$.
\end{enumerate}
\end{theorem}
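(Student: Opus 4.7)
The plan is to reduce Theorem \ref{equivalence1} to the two lemmas that precede it: Lemma \ref{genunivcov} already establishes the equivalences $(2) \Leftrightarrow (3) \Leftrightarrow (4)$, so all that is really left to prove is that $(1)$ sits in the same equivalence class. I will close the loop by proving $(2) \Rightarrow (1)$ and $(1) \Rightarrow (2)$.

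For $(2) \Rightarrow (1)$, the argument is immediate: if $p_H:\tXh \to X$ is an $\lpcz$-covering, then Lemma \ref{genunivcov} gives condition $(4)$, namely $(p_H)_{\ast}(\pi_1(\tXh,\txh))=H$. Thus $p_H$ itself is a based $\lpcz$-covering realizing $H$ as its stabilizer subgroup, so $H$ is an $\lpcz$-covering subgroup by definition.

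For $(1) \Rightarrow (2)$, suppose $H$ is an $\lpcz$-covering subgroup, so there exists a based $\lpcz$-covering $p:(\hX,\hx)\to(X,x_0)$ with $p_{\ast}(\pi_1(\hX,\hx))=H$. Apply Lemma \ref{topologyofgencovlemma} to produce a homeomorphism $\wt{p}:(\hX,\hx)\to(\tXh,\txh)$ satisfying $p_H\circ \wt{p}=p$. Since $\wt{p}$ is a homeomorphism intertwining the projections, it is an isomorphism in $\mathbf{Cov}_{\lpcz}(X)$ between the two projections viewed as weak $\lpcz$-coverings, and being an $\lpcz$-covering is invariant under such isomorphism. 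Concretely, given any $Y\in\lpcz$ and a based map $f:(Y,y)\to(X,p_H([\alpha]_H))$ with $f_{\ast}(\pi_1(Y,y))\subseteq (p_H)_{\ast}(\pi_1(\tXh,[\alpha]_H))$, transport $f$ across $\wt{p}$: condition on subgroups pulls back along $\wt{p}_{\ast}$ to the hypothesis of Definition \ref{lpccovdef} for $p$, producing the unique lift $\wt{f}_{\hX}$ to $\hX$, and then $\wt{p}\circ \wt{f}_{\hX}$ is the required unique lift to $\tXh$.

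Combining these two implications with Lemma \ref{genunivcov} closes the chain $(1) \Leftrightarrow (2) \Leftrightarrow (3) \Leftrightarrow (4)$. I do not expect a real obstacle here; the only mildly delicate point is remembering that uniqueness of the lift in the $(1)\Rightarrow(2)$ step really uses that $\wt{p}$ is a bijection, so no second lift can arise. All the topological content was already absorbed into Lemma \ref{topologyofgencovlemma} (which supplies the whisker-topology description) and Lemma \ref{genunivcov} (which handles unique path lifting), and this theorem is essentially a packaging of those two results.
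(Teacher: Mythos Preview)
Your proposal is correct and matches the paper's approach exactly: the paper does not write out a separate proof but simply states that the theorem follows by ``combining Lemmas \ref{genunivcov} and \ref{topologyofgencovlemma},'' and you have spelled out precisely that combination. Your $(1)\Rightarrow(2)$ step via the homeomorphism from Lemma \ref{topologyofgencovlemma} could be shortened slightly (once $p_H=p\circ\wt{p}^{\,-1}$ with $\wt{p}$ a homeomorphism, unique path lifting transfers immediately and Lemma \ref{genunivcov} gives $(2)$), but the argument as written is fine.
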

It is not true that every $\Delta$-covering map is a $\lpcz$-covering map. For instance, if $X$ is locally path-connected but not $\Delta$-generated, then the identity map $X\to X$ is a $\lpcz$-covering map but not a $\Delta$-covering. We confirm that every $\Delta$-covering map is also a $\lpcz$-covering map when $X$ is first countable.
\begin{lemma}\label{equivalence3}
Suppose $X$ is first countable and $p:\oX\to X$ is a map. Then $p$ is a $\Delta$-covering map if and only if $p$ is a $\lpcz$-covering map.
\end{lemma}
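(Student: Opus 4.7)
The plan is to prove both directions by reducing $p$ to the whisker-topology covering $p_H:\tXh\to X$ for $H=p_\ast(\pi_1(\bar X,\tilde x_0))$, then invoking Theorem \ref{equivalence1} together with Lemma \ref{genunivcov}.

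For the implication $(\Leftarrow)$, I would start from a $\lpcz$-covering $p$. Theorem \ref{equivalence1} identifies $\bar X$ over $X$ with $\tXh$ under the whisker topology. Since $X$ is first countable, a countable neighborhood base $\{U_n\}$ at $\alpha(1)$ produces a countable neighborhood base $\{B([\alpha]_H,U_n)\}$ at $[\alpha]_H$, so $\tXh$ is first countable; combined with local path-connectedness, Lemma \ref{deltagenpropslemma} gives $\tXh\in\Delta\topz$, verifying condition (1) of Definition \ref{deltacovdef}. For condition (2), any $\Delta$-generated $Y$ is locally path-connected, so the lifting property is inherited from the hypothesis that $p$ is a $\lpcz$-covering.

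For the nontrivial implication $(\Rightarrow)$, let $p$ be a $\Delta$-covering, so $\bar X\in\Delta\topz\subseteq\lpcz$. Because $p$ is in particular a disk-covering, it has unique path lifting, so Lemma \ref{liftsexist} gives a continuous standard lift $\tilde p:(\bar X,\tilde x_0)\to(\tXh,\txh)$ satisfying $p_H\circ\tilde p=p$. I would then show $\tilde p$ is a homeomorphism. Bijectivity is a direct adaptation of the bijectivity argument in Lemma \ref{topologyofgencovlemma}, using only unique path lifting of $p$. For openness (the crux), given an open $V\subseteq \bar X$ and $[\alpha]_H=\tilde p(a)\in\tilde p(V)$, I pick a countable neighborhood base $\{U_n\}$ at $p(a)$ in $X$ (here is where first countability enters) and, arguing by contradiction, paths $\epsilon_n:(\ui,0)\to(U_n,p(a))$ with $[\alpha\cdot\epsilon_n]_H\notin\tilde p(V)$. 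Assemble them into a map $\epsilon:(H(\mathbb{N}),a_0)\to(X,p(a))$ on the countable arc-hedgehog. A direct verification shows $H(\mathbb{N})$ is first countable at $a_0$ and locally path-connected, hence $\Delta$-generated by Lemma \ref{deltagenpropslemma}. Since $H(\mathbb{N})$ is simply connected, the $\Delta$-covering property of $p$ produces a lift $\hat\epsilon:(H(\mathbb{N}),a_0)\to(\bar X,a)$. Then $\hat\epsilon^{-1}(V)$ is an open neighborhood of $a_0$, so contains the $k$-th arc $\ui_k$ entirely for some $k$; writing $\hat\epsilon_k$ for the restriction, we get $\hat\epsilon_k(1)\in V$ and $\tilde p(\hat\epsilon_k(1))=[\alpha\cdot\epsilon_k]_H\in\tilde p(V)$, contradicting the choice of $\epsilon_k$. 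Thus $\tilde p$ is open, and so a homeomorphism.

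Once $\tilde p$ is a homeomorphism, $p_H$ inherits unique path lifting from $p$, and $(p_H)_\ast(\pi_1(\tXh,\txh))=p_\ast(\pi_1(\bar X,\tilde x_0))=H$; Lemma \ref{genunivcov} then shows $p_H$ is a $\lpcz$-covering, and transferring along $\tilde p$ shows $p$ is a $\lpcz$-covering. The main obstacle is the openness step, and more precisely the realization that the replacement of an uncountable neighborhood base by a countable one is exactly what guarantees the ambient arc-hedgehog is $\Delta$-generated; without first countability of $X$ one would need uncountable $H(J)$, which need not lie in $\Delta\topz$, and the $\Delta$-covering lifting criterion would be unavailable.
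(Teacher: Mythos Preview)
Your proposal is correct and follows essentially the same approach as the paper: both directions reduce to the whisker covering $p_H$, with the $(\Leftarrow)$ direction using first countability of $\tXh$ and Lemma~\ref{deltagenpropslemma}, and the $(\Rightarrow)$ direction proving $\tilde p$ is open via the countable arc-hedgehog $H(\omega)$, which is $\Delta$-generated precisely because first countability permits a countable index set. The only cosmetic difference is that the paper phrases the openness contradiction by fixing a path $\beta$ in $\bar X$ ending at the bad point and writes $[(p\circ\beta)\cdot\epsilon_n]_H$, whereas you name the point $a$ and the image path $\alpha$ directly; the arguments are otherwise identical.
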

\begin{proof}
First, suppose $p:\oX\to X$ is a $\lpcz$-covering map. By Theorem \ref{equivalence1}, $\oX\cong\tXh$ where $H=p_{\ast}(\pi_1(\oX,\ox))$. But if $X$ is first countable, then so is $\tXh$ (by the definition of the whisker topology). Since $\tXh$ is also locally path-connected, it is $\Delta$-generated by Lemma \ref{deltagenpropslemma}. Since every $\Delta$-generated space is locally path-connected, condition (2) of Definition \ref{deltacovdef} is straightforward to verify. Therefore, $p$ is a $\Delta$-covering map.

For the converse, suppose $p:\oX\to X$ is a $\Delta$-covering such that $p_{\ast}(\pi_1(\oX,\ox))=H$. Since $\oX$ is locally path-connected, $p:\oX\to X$ is continuous, and $p_{\ast}(\pi_1(\oX,\ox))=H$, there is by Lemma \ref{liftsexist}, a map $\wt{p}:\oX\to\tXh$ such that $p_H\circ\wt{p} =p$. By definition, if $\gamma:\ui\to\oX$ is a path from $\ox$ to $z$ and $\alpha=p\circ\gamma$, then $\wt{p}(\gamma(t))=\wt{\alpha}_{\mathscr{S}}(t)=[\alpha_t]_H$ where $\alpha_t(s)=\alpha(st)$ is the path given by restricting $\alpha$ to $[0,t]$. We show in the next paragraph that $\wt{p}$ is a homeomorphism. Once this is done, it is clear that $p_H$ has the unique path lifting property and is therefore a $\lpcz$-covering (by Theorem \ref{equivalence1}) which is equivalent to $p$.

Our proof that $\wt{p}$ is a homeomorphism is quite similar to the proof of Lemma \ref{topologyofgencovlemma}. Verifying bijectivity is identical so we leave it to the reader. We check that $\wt{p}$ is open: To obtain a contradiction, suppose $V$ is open in $\oX$ and $\wt{p}(V)$ is not open in $\tXh$. Pick a path $\beta:(\ui,0)\to (\oX,\ox)$ such that $\beta(1)\in V$ and no neighborhood of $\wt{p}(\beta(1))=[p\circ \beta]_H$ is contained in $\wt{p}(V)$. Let $\{U_n|n\geq 1\}$ be a countable neighborhood base at $x=p(\beta(1))$. Since $\{B([p\circ \beta]_H,U_n)|n\geq 1\}$ is a neighborhood base at $[p\circ \beta]_H$ and $\wt{p}(V)$ is not open, there is a path $\epsilon_n:(\ui,0)\to (U_n,x)$ such that $[(p\circ \beta)\cdot \epsilon_n]_H\in B([p\circ \beta]_H,U_n)\backslash \wt{p}(V)$ for each $n\geq 1$. Let $x_n=\epsilon_n(1)\in U_n$.

The paths $\epsilon_n$ combine to give a map $\epsilon:(H(\omega),a_0)\to (X,x)$ on the $\omega$-arc-hedgehog. Since $H(\omega)$ is $\Delta$-generated and simply-connected, there is a unique lift $\overline{\epsilon}:(H(\omega),a_0)\to (\oX,\beta(1))$ (such that $p\circ \overline{\epsilon}=\epsilon$). Since $V$ is an open neighborhood of $\beta(1)$, there is a $N$ such that $\overline{\epsilon}$ maps the $n$-th arc of $H(\omega)$ in to $V$ for all $n\geq N$. If $\overline{\epsilon}_n$ is the path which is the restriction of $\overline{\epsilon}$ to $\ui_n$, then $\overline{\epsilon}_n(1)\in V$ and $p\circ \overline{\epsilon}_n=\epsilon_n$. But this implies \[[(p\circ \beta)\cdot\epsilon_n]_H=[(p\circ \beta)\cdot (p\circ\overline{\epsilon}_{n})]_H=[p\circ (\beta\cdot\overline{\epsilon}_{n})]_H=\wt{p}(\beta\cdot\overline{\epsilon}_{n}(1))\in \wt{p}(V)\] for some $n$, which is a contradiction.
\end{proof}
Combining the previous lemma with Theorem \ref{equivalence1}, we obtain the following comparison for first countable spaces. This comparison states implies that for a first countable space $X$, the subgroup structure of $\pionex$ retained by all disk-coverings, $\Delta$-coverings, and $\lpcz$-coverings is exactly the same.
\begin{theorem}\label{equivalence2}
If $X$ is first countable, then $\mathbf{Cov}_{\lpcz}(X)=\delcovx$. Moreover, the following diagram of functors commutes up to natural isomorphism.
\[\xymatrix{
\dcovx \ar[dr]_-{\mu} \ar[r]^-{\Delta} & \delcovx \ar@{^{(}->}[d]^-{\mu}  \ar@{=}[r] & \mathbf{Cov}_{\lpcz}(X) \ar@{^{(}->}[dl]^-{\mu}\\
& G\mathbf{Set}
}\]
\end{theorem}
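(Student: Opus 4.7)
The plan is to assemble the two assertions from ingredients already in place in the preceding sections: Lemma \ref{equivalence3} will give the equality $\mathbf{Cov}_{\lpcz}(X)=\delcovx$, and Theorem \ref{naturaliso1} will supply the commutativity of the diagram. So the theorem amounts to bookkeeping, with all real work done previously.

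First I would verify the equality of categories. At the level of objects, Lemma \ref{equivalence3} states precisely that when $X$ is first countable, a map $p:\oX\to X$ is a $\Delta$-covering if and only if it is an $\lpcz$-covering. Hence $\delcovx$ and $\mathbf{Cov}_{\lpcz}(X)$ have the same objects. At the level of morphisms, both are full subcategories of $\dcovx$: a morphism in either is simply a map $f$ making the triangle over $X$ commute, with no extra condition imposed. So once the objects agree the hom-sets between any fixed pair of objects automatically agree, yielding equality of categories, not merely an equivalence.

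Next I would handle the diagram. The right-hand triangle containing the equality sign is tautological. The remaining data, namely the triangle with vertices $\dcovx$, $\delcovx$, and $G\mathbf{Set}$ and edges $\Delta$, $\mu$, $\mu$, is exactly the content of Theorem \ref{naturaliso1}: there is a natural isomorphism $\mu\circ\Delta\cong\mu$ whose component at a disk-covering $p:E\to X$ is the $G$-equivariant identification of the fiber of $\Delta(p):\Delta(E)\to X$ with the fiber of $p$, induced by the identity function $\Delta(E)\to E$. Splicing this with the equality produces the claimed commutativity up to natural isomorphism for the full diagram.

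The only substantive step is really Lemma \ref{equivalence3}, and the main conceptual difficulty there---already settled---is showing that a $\Delta$-covering over a first countable base necessarily carries the whisker topology and therefore satisfies the $\lpcz$-lifting criterion. Since that lemma is already proved, I do not foresee any further obstacle; the present theorem is a packaging of what precedes it.
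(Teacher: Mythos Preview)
Your proposal is correct and matches the paper's own treatment: the paper does not give a separate proof but simply remarks that the theorem follows by combining Lemma \ref{equivalence3} with the earlier results, exactly as you outline. Your explicit observation that both categories are full subcategories of $\dcovx$ (so equality of objects forces equality of hom-sets) and your invocation of Theorem \ref{naturaliso1} for the diagram are the right ingredients.
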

The study of wild fundamental groups is typically focused on the fundamental groups of metric spaces. The previous theorem implies that any covering-theoretic attempt to characterize the subgroup structure of the fundamental group of a metric space is retained or improved upon by the $\lpcz$-coverings introduced in \cite{FZ07}. In particular, there can be no mysterious disk-covering subgroup $H\subseteq\pionex$ for which the classical endpoint projection $p_H:\tXh\to X$ fails to have unique path lifting. The author does not know of an example of a non-first countable, $\Delta$-generated space $X$ and subgroup $H\subseteq \pionex$ such that $H$ is a $\Delta$-covering subgroup but not a $\lpcz$-covering subgroup.
\begin{example}\label{failureexample}
Let $\bbh=\bigcup_{n\geq 1}\{(x,y)\in\bbr^2|(x-1/n)^2+y^2=1/n^2\}$ be the standard Hawaiian earring space and $\ell_n:S^1\to \bbh$ be a simple closed curve traversing the n-th circle. We consider the subgroup $F_{\infty}\subseteq \pi_1(\bbh,(0,0))$ freely generated by the homotopy classes of the loops $\ell_n$. There does not exist any subcategory $\mcc\subseteq \topz$ such that $F_{\infty}$ is a $\mcc$-covering subgroup. If such a subcategory were to exist, then $F_{\infty}$ would be a $\Delta$-covering subgroup and therefore a $\lpcz$-subgroup since $\bbh$ is first countable. But by Theorem \ref{equivalence1}, this would imply that $p_{F_{\infty}}:\tX_{F_{\infty}}\to X$ has unique path-lifting, which is false: see Example 6.2 of \cite{FZ07}
\end{example}
\begin{example}\label{failureexample2}
For an even more extreme example consider any non-simply connected, first countable space $X$ such every non-trivial homotopy class has a representative in an arbitrary neighborhood of the basepoint $x_0$. Such spaces are considered in detail in \cite{Virk1,Virk2} and include the famous harmonic archipelago and Griffiths twin cone spaces. It is well-known that the only subgroup $H\subseteq \pionex$ for which $p_H:\tXh\to X$ has the unique path lifting property is when $H=\pionex$. Using the same reasoning as in the previous example, we can conclude that given any $\mcc$, the only $\mcc$-covering subgroup of $\pionex$ is $\pionex$ itself. Thus one cannot possibly distinguish any elements of $\pionex$ using any theory of generalized coverings.
\end{example}
\section{Continuous lifting: $\mathbf{Fan}$-coverings}
\begin{definition}
Let $(J,\leq)$ be a directed set and $K=J\cup\{\infty\}$ be obtained by adding one maximal point. Give $K$ the topology generated by basis sets $\{k\}$ and $V_k=\{\infty\}\cup\{j\in J|j>k\}$ for $k<\infty$. The \textit{directed arc-fan over} $J$ is the cone over $K$, i.e. the quotient space $F(J)=K\times \ui/K\times\{0\}$ with basepoint $b_0$, the image of $K\times \{0\}$ in the quotient.
\end{definition}
We denote the $k$-th arc of $F(J)$ (i.e. the image of $\{k\}\times\ui$) by $\ui_k$ and take the point $t_k$ to be the image of $\{(k,t)\}$ in $\ui_k$. The restriction of a map $f:F(J)\to X$ to $\ui_k$ is a path $f_k:\ui_{k}\to X$. Observe that maps $f:(F(J),b_0)\to (X,x)$ are in bijective correspondence with convergent nets $f_j\to f_{\infty}$ in $P(X,x)$ indexed by $J$.

Let $\mcf$ be the category whose objects are $D^2$ and the directed fans $F(J)$ for all directed sets $J$. Let $\fan=\scrh(\mcf)$ be the coreflective hull of $\mcf$ in $\topz$. Note that the arc-hedgehog $H(J)=F(J)/(\{\infty\}\times\ui)$ is a quotient of $F(J)$ and thus $\lpcz\subset \fan$. The directed fan $F(\omega)$ is a non-locally path-connected continuum. Thus $\lpcz$ is a proper subcategory of $\fan$.
\begin{lemma}\label{quotienteval}
A path-connected space $X$ is an object of $\fan$ if and only if for some (equivalently any) $x\in X$, the map $ev:P(X,x)\to X$, $ev(\alpha)=\alpha(1)$ is quotient.
\end{lemma}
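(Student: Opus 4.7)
The plan is to deduce both directions of the equivalence from the bijective correspondence, noted just before the lemma, between basepoint-preserving maps $f:(F(J),b_0)\to(X,x)$ and convergent nets $f_j\to f_\infty$ in $P(X,x)$. Throughout, I fix an arbitrary basepoint $x\in X$; the ``some (equivalently any)'' clause is then automatic, since both sides of the equivalence will have been shown to agree with the basepoint-free condition $X\in\fan$.

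For the implication $X\in\fan\Rightarrow ev$ is quotient: since $ev$ is continuous and surjective, it suffices to show that whenever $ev^{-1}(U)$ is open in $P(X,x)$, the set $U$ is open in $X$. Because $X$ carries the final topology with respect to maps out of $D^2$ and the $F(J)$'s, I only need $g^{-1}(U)$ to be open in $Y$ for each $g:Y\to X$ with $Y\in\mcf$. The strategy is to construct a continuous lift $\Phi:Y\to P(X,x)$ with $ev\circ\Phi=g$, so that $g^{-1}(U)=\Phi^{-1}(ev^{-1}(U))$ is open for free. Fix a path $\sigma$ from $x$ to $g$ of the basepoint of $Y$, and for each $y\in Y$ let $\gamma_y:\ui\to Y$ be a canonical path from the basepoint to $y$: a straight line segment when $Y=D^2$, or $s\mapsto(k,st)$ along the $k$-th arc when $Y=F(J)$ and $y=(k,t)$. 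Put $\Phi(y)=\sigma\cdot(g\circ\gamma_y)$. Continuity of $\Phi$ then follows, via the exponential law for the locally compact interval and continuity of concatenation, from continuity of the reparametrization map $h:Y\times\ui\to Y$, $h(y,s)=\gamma_y(s)$. For $Y=D^2$ this is obvious (convex combinations); for $Y=F(J)$ it is $((k,t),s)\mapsto(k,st)$, and its continuity at $b_0$ is the one step requiring care.

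For the converse, assume $ev$ is quotient and let $U\subseteq X$ be $\mcf$-open, meaning $g^{-1}(U)$ is open in $Y$ for every $g:Y\to X$ with $Y\in\mcf$; I claim $U$ is open in $X$, which exhibits $X$ as having the final topology with respect to $\mcf$. By the quotient property of $ev$, this reduces to showing $ev^{-1}(U)$ is open in $P(X,x)$. If not, there exists $\alpha_\infty\in ev^{-1}(U)$ and a net $\alpha_j\to\alpha_\infty$ in $P(X,x)$ indexed by a directed set $J$ with $\alpha_j(1)\notin U$ for all $j\in J$. By the correspondence cited above, this net defines a continuous map $f:(F(J),b_0)\to(X,x)$ with $f|_{\ui_k}=\alpha_k$ for every $k\in J\cup\{\infty\}$. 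Then $(\infty,1)\in f^{-1}(U)$, so $\mcf$-openness of $U$ forces $f^{-1}(U)$ to contain some basic neighborhood of the form $V_{k_0}\times(1-\epsilon,1]$ of $(\infty,1)$ in $F(J)$; in particular $(j,1)\in f^{-1}(U)$ for every $j>k_0$, giving $\alpha_j(1)\in U$ and contradicting the choice of net.

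The main technical obstacle is the continuity check for the scaling map $h$ at $b_0$; it amounts to a standard but slightly fiddly argument with neighborhood bases, using that any neighborhood of $b_0$ contains a set of the form $V_{k_0}\times[0,\epsilon)$ together with slices $\{k\}\times[0,\delta_k)$ for $k\leq k_0$, which one preimages under $h$ by scaling the widths against an upper bound on $s$. Everything else is formal unwinding of the final-topology definition of $\fan$ and the net-to-fan correspondence.
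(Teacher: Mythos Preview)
Your proof is correct, and for the direction $ev$ quotient $\Rightarrow X\in\fan$ it is essentially the paper's argument verbatim: both of you pass from a hypothetical non-open $ev^{-1}(U)$ to a convergent net of paths, reinterpret it as a map out of a directed arc-fan, and read off the contradiction from $\mcf$-openness of $U$.

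For the direction $X\in\fan\Rightarrow ev$ quotient, however, you take a genuinely different and cleaner route. The paper argues by contradiction: assuming $U$ is not open, it locates a map $f:F(J)\to X$ with $f^{-1}(U)$ not open, picks a bad point $z\in f^{-1}(U)$, and then splits into two cases according to whether $z=b_0$ (handled with an arc-hedgehog) or $z\neq b_0$ (handled by building an auxiliary fan $F(L)\to F(J)$ via linear reparametrization). You bypass this case analysis entirely by constructing, for each test object $Y\in\mcf$, a continuous lift $\Phi:Y\to P(X,x)$ with $ev\circ\Phi=g$, so that $g^{-1}(U)=\Phi^{-1}(ev^{-1}(U))$ is open in one stroke. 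Your approach buys uniformity and is very much in the spirit of the paper's own proof of Theorem~\ref{contractible}, where $CZ\in\fan$ is shown by exhibiting an explicit continuous section of $ev$; the paper's two-case argument, by contrast, is more hands-on but longer. Two minor comments: your phrase ``$k\leq k_0$'' should really be ``$k\notin V_{k_0}$'' since $J$ need not be totally ordered; and the continuity of your scaling map $h:F(J)\times\ui\to F(J)$ at $b_0$ is actually easier than you suggest, since for any open neighborhood $W$ of $b_0$ one has $h(W\times\ui)\subseteq W$ simply because $st\leq t$ (alternatively, use that $\ui$ is locally compact so $K\times\ui\times\ui\to F(J)\times\ui$ is quotient).
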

\begin{proof}
First note that if $y\in X$ and $\beta:\ui\to X$ is any path from $y$ to $x$, then $ev:P(X,x)\to X$ factors as the composition of the left concatenation map $\lambda_{\beta}:P(X,x)\to P(X,y)$, $\lambda_{\beta}(\alpha)=\beta\cdot\alpha$ and evaluation $ev:P(X,y)\to X$. Thus if $ev:P(X,x)\to X$ is quotient, then $ev:P(X,y)\to X$ is quotient for all $y\in X$.

Suppose $ev:P(X,x)\to X$ is quotient for all $x\in X$. We show that $X\in\fan$. Suppose $U\subseteq X$ is a set such that for every directed set $J$ and map $f:F(J)\to X$, $f^{-1}(U)$ is open in $F(J)$. Fix $x\in X$. We check that $ev^{-1}(U)$ is open in $P(X,x)$. If $ev^{-1}(U)$ is not open, then there is a convergent net $f_{j}\to f_{\infty}$, $j\in J$ in $P(X,x)$ such that $f_{\infty}\in ev^{-1}(U)$ and $f_j\notin ev^{-1}(U)$ for every $j\in J$. This net uniquely induces a map $f:(F(J),b_0)\to(X,x)$ for which $f^{-1}(U)$ is open by assumption. But $1_{\infty}\in [0,1]_{\infty}\cap f^{-1}(U)$ which implies the existence of $j_0$ such that $1_{j}\in  f^{-1}(U)$ for all $j>j_0$. This contradicts the assumption that $f_j(1)=f(1_j)\notin U$ for all $j\in J$. Therefore, $ev^{-1}(U)$ must be open in $P(X,x)$. Since $ev$ is quotient, $U$ is open in $X$. This proves $X\in\fan$.

For the converse, suppose $X\in\fan$, $x\in X$, and $U\subseteq X$ is such that $ev^{-1}(U)$ is open in $P(X,x)$. To see that $ev$ is quotient, we check that $U$ is open. If $U$ is not open, then there is a map $f:(F(J),b_0)\to (X,y)$ on a directed arc-fan such that $f^{-1}(U)$ is not open in $F(J)$. Pick a point $z\in f^{-1}(U)$ such that no neighborhood of $z$ is contained in $f^{-1}(U)$.

Case I: Suppose $z=b_0$. Since $F(J)$ is locally path-connected at $b_0$, there is a directed set $K$ and a map $h:(H(K),a_0)\to (F(J),b_0)$ such that $h(1_k)\notin f^{-1}(U)$ for every $k\in K$. But since $H(K)\in\fan$ and $f\circ h:(H(K),a_0)\to (X,y)$ is continuous, $h^{-1}(f^{-1}(U))$ is an open neighborhood of $a_0$ in $H(K)$. Thus there is a $k_0\in K$ such that $1_k\in  h^{-1}(f^{-1}(U))$ for all $k>k_0$. This contradicts the fact that $h(1_k)\notin f^{-1}(U)$ for every $k\in K$. Thus $U$ must be open.

Case II: Suppose $z\neq b_0$. There is a directed set $L$ and a net $z_{\ell}\to z_{\infty}=z$, ${\ell}\in L$ in $F(J)$ such that $z_{\ell}\notin f^{-1}(U)$ for every ${\ell}\in L$. Since $z\neq b_0$, we may assume $z_{\ell}$ has image in the open set $F(J)\backslash \{b_0\}\cong (J\cup\{\infty\})\times (0,1]$. Thus for each $\ell\in L\cup\{\infty\}$, we have $z_{\ell}=t_{j(\ell)}\in (0,1]_{j(\ell)}$ for some $j(\ell)\in J$. Define a function $h:(F(L),b_0)\to (F(J),b_0)$ so that for all $\ell\in L\cup \{\infty\}$, the restriction of $h$ to $[0,1]_{\ell}$ is the linear path $h(s_{\ell})=(st)_{j(\ell)}$. Since $[0,1]$ is locally compact and $L\cup \{\infty\}$ is Hausdorff, the continuity of $h$ follows directly from exponential laws for maps on pairs of spaces. Note that $h(1_{\ell})=z_{\ell}$ for all $\ell\in L\cup\{\infty\}$. Since $f$ and $h$ are continuous, $(f\circ h)_{\ell}\to (f\circ h)_{\infty}$ is a convergent net of paths in $P(X,y)$. Now if $\gamma$ is any path in $X$ from $x$ to $y$, then $\gamma\cdot (f\circ h)_{\ell}\to \gamma\cdot (f\circ h)_{\infty}$ in $P(X,x)$. Since $\gamma\cdot (f\circ h)_{\infty}(1)=f(z)\in U$, we have $\gamma\cdot (f\circ h)_{\infty} \in ev^{-1}(U)$. By assumption, $ev^{-1}(U)$ is open so there is an $\ell_0$ such that $\gamma\cdot (f\circ h)_{\ell}\in ev^{-1}(U)$ for all $\ell>\ell_0$. However, this contradicts the fact that $\gamma\cdot (f\circ h)_{\ell}(1)=f(z_\ell)\notin U$ for every $\ell\in L$. Thus $U$ must be open.
\end{proof}
\begin{corollary}
Suppose $fan:\topz\to\fan$ is the coreflection functor. The topology of $fan(X)$ is the quotient topology on $X$ with respect to evaluation $ev:P(X,x)\to X$ for any $x\in X$.
\end{corollary}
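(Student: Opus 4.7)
The plan is to show that the topology $\tau_{fan}$ of $fan(X)$ equals the quotient topology $\tau_q$ induced by $ev:P(X,x)\to X$ by establishing both inclusions. Both refine the original topology of $X$, and the proof mimics the two halves of Lemma \ref{quotienteval} but now with source space $P(X,x)$ rather than $P(fan(X),x)$.

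For the inclusion $\tau_q\subseteq\tau_{fan}$, I suppose $ev^{-1}(U)$ is open in $P(X,x)$ and check that $f^{-1}(U)$ is open in $Z$ for every continuous $f:Z\to X$ with $Z\in\mcf$. The trick is to lift $f$ through the path space. Fix a path $\gamma$ from $x$ to the image of the basepoint of $Z$. If $Z=D^2$, define $\wt{f}:D^2\to P(X,x)$ by $\wt{f}(z)=\gamma\cdot (f\circ \ell_z)$, where $\ell_z:\ui\to D^2$ is the linear path from $d$ to $z$; if $Z=F(J)$, define $\wt{f}(z)=\gamma\cdot (f\circ m_z)$, where $m_z(t)=t\cdot z$ is the radial path from $b_0$ to $z$ in the cone. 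In each case $ev\circ \wt{f}=f$, and $\wt{f}$ is continuous by the exponential law applied to the cone scaling $D^2\times \ui\to D^2$, respectively $F(J)\times\ui\to F(J)$ (the latter uses that $\ui$ is locally compact Hausdorff, so the quotient $K\times \ui\to F(J)$ remains quotient after product with $\ui$). Hence $f^{-1}(U)=\wt{f}^{-1}(ev^{-1}(U))$ is open, so $U$ is $\mcf$-open.

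For the reverse inclusion $\tau_{fan}\subseteq\tau_q$, I suppose $U$ is $\mcf$-open but, for contradiction, that $ev^{-1}(U)$ fails to be open at some $\alpha\in ev^{-1}(U)$. Let $\{N_j\}_{j\in J}$ be a neighborhood base at $\alpha$ in $P(X,x)$, directed by reverse inclusion, and pick $\beta_j\in N_j\setminus ev^{-1}(U)$ for each $j$, so $\beta_j\to\alpha$ in $P(X,x)$ while $\beta_j(1)\notin U$. Setting $\beta_\infty=\alpha$, the resulting convergent net defines a continuous map $K=J\cup\{\infty\}\to P(X,x)$; via the exponential law and the identification $F(J)=K\times \ui/K\times\{0\}$, this becomes a continuous map $f:F(J)\to X$ with $f(t_j)=\beta_j(t)$. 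Since $1_\infty\in f^{-1}(U)$ while $1_j\notin f^{-1}(U)$ for every $j\in J$, and every basic neighborhood of $1_\infty$ in $F(J)$ necessarily contains $1_j$ for all sufficiently large $j$, the set $f^{-1}(U)$ is not open at $1_\infty$, contradicting the $\mcf$-openness of $U$.

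The main obstacle is the second direction, where one must translate the analytic failure of openness of $ev^{-1}(U)$ in the compact-open topology into the topological failure of openness of $f^{-1}(U)$ for some $f$ out of an arc-fan. The key device is the bijective correspondence between convergent nets of paths in $P(X,x)$ and continuous maps $F(J)\to X$---exactly the mechanism used inside the proof of Lemma \ref{quotienteval}, but here applied to an arbitrary (possibly non-$\fan$) space $X$. The first direction is purely formal once the radial lifts $\wt{f}$ are in place.
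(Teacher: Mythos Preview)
Your proof is correct, but it takes a different route from the paper's. The paper's argument is shorter because it leverages Lemma~\ref{quotienteval} directly rather than reproving its content: it first observes that the continuous identity $fan(X)\to X$ induces a continuous bijection $b:P(fan(X),x)\to P(X,x)$, then uses the fan/net correspondence (any convergent net in $P(X,x)$ gives a map $F(J)\to X$, hence a map $F(J)\to fan(X)$, hence a convergent net in $P(fan(X),x)$) to show $b$ is a homeomorphism; since $fan(X)\in\fan$, Lemma~\ref{quotienteval} then gives that $ev:P(fan(X),x)\to fan(X)$ is quotient, and composing with $b^{-1}$ finishes. Your double-inclusion argument is essentially an unwinding of this: your second direction ($\tau_{fan}\subseteq\tau_q$) is exactly the net argument the paper uses to prove $b^{-1}$ continuous, and your first direction ($\tau_q\subseteq\tau_{fan}$) reproves, via radial lifts, the half of Lemma~\ref{quotienteval} that the paper simply cites. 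What you gain is a self-contained proof that does not pass through $P(fan(X),x)$; what the paper gains is brevity by isolating the key identification $P(fan(X),x)\cong P(X,x)$ and then invoking the lemma already in hand.
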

\begin{proof}
Pick any $x\in X$. The continuous identity $fan(X)\to X$ is a disk-covering and therefore induces a continuous bijection $b:P(fan(X),x)\to P(X,x)$ on path spaces. A convergent net $\alpha_{j}\to \alpha$, $j\in J$ in $P(X,x)$ corresponds to a continuous map $f:F(J)\to X$ on a directed arc-fan for which $f:F(J)\to fan(X)$ is also continuous since $F(J)\in\fan$. Thus $\alpha_{j}$ also converges to $\alpha$ in $P(fan(X),x)$. Consequently, $b$ is a homeomorphism. By Lemma \ref{quotienteval}, $ev:P(fan(X),x)\to fan(X)$ is quotient. Composing $b^{-1}$ with this quotient map completes the proof.
\end{proof}
\begin{theorem}\label{contractible}
Every contractible space is an object of $\fan$. Consequently, if $\mathbf{Cntr}$ is the category of contractible spaces, then $\scrh(\mathbf{Cntr})=\fan$.
\end{theorem}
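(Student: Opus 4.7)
The plan is to reduce to the evaluation criterion of Lemma \ref{quotienteval} and then exhibit a continuous section of the evaluation map for any contractible space. Let $X$ be contractible and path-connected, fix a basepoint $x_0\in X$, and let $H\colon X\times\ui\to X$ be a contraction with $H(x,0)=x_0$ and $H(x,1)=x$. Since $\ui$ is locally compact Hausdorff, the exponential adjoint $s\colon X\to P(X)$ of $H$ defined by $s(x)(t)=H(x,t)$ is continuous; because $s(x)(0)=x_0$ for all $x$, the map $s$ actually takes values in $P(X,x_0)$, giving a continuous map $s\colon X\to P(X,x_0)$ with $ev\circ s=id_X$.

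Next, I would observe that a continuous surjection admitting a continuous section is automatically a quotient map: if $U\subseteq X$ satisfies $ev^{-1}(U)$ open in $P(X,x_0)$, then $U=s^{-1}(ev^{-1}(U))$ is open in $X$. Hence $ev\colon P(X,x_0)\to X$ is quotient, so by Lemma \ref{quotienteval}, $X$ is an object of $\fan$. This establishes the first assertion, namely $\mathbf{Cntr}\subseteq \fan$.

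For the second assertion, I would show the two inclusions separately. On one hand, the disk $D^2$ is contractible, and each directed arc-fan $F(J)$ is a cone and hence contractible, so every object of $\mcf$ lies in $\mathbf{Cntr}$; applying the coreflective-hull operator gives $\fan=\scrh(\mcf)\subseteq \scrh(\mathbf{Cntr})$. On the other hand, by the first part $\mathbf{Cntr}\subseteq\fan$, and since $\fan$ is already coreflective (it is the coreflective hull $\scrh(\mcf)$, hence closed under topological sums and quotients), we obtain $\scrh(\mathbf{Cntr})\subseteq\scrh(\fan)=\fan$. Combining the two inclusions yields $\scrh(\mathbf{Cntr})=\fan$.

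I do not anticipate a serious obstacle here: the only non-formal ingredient is the exponential-law construction of the section $s$, which is standard given the local compactness of $\ui$. The role of Lemma \ref{quotienteval} is precisely to convert the question of membership in a coreflective hull into a concrete property of the evaluation map, and a continuous section is the most direct way to verify that property.
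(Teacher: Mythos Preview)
Your proof is correct and follows essentially the same strategy as the paper: construct a continuous section of the evaluation map via the exponential law, deduce that $ev$ is quotient, and invoke Lemma~\ref{quotienteval}. The only difference is cosmetic: the paper first observes that every contractible space $Z$ is a retract (hence quotient) of its cone $CZ$, and then builds the section $f\colon CZ\to P(CZ,v)$, $f(t_z)(s)=(st)_z$, on the cone rather than on $Z$ itself; you instead use an arbitrary contraction $H$ of $X$ to build the section directly on $X$. Your route is slightly more direct, while the paper's detour through $CZ$ has the minor side benefit of showing that cones over \emph{arbitrary} (possibly non-path-connected) spaces lie in $\fan$. Both arguments rest on the same exponential-law observation and the same appeal to Lemma~\ref{quotienteval}.
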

\begin{proof}
Since every directed arc-fan is the cone over a convergent net, we have $\mcf\subset \mathbf{Cntr}$ and thus $\fan\subset\scrh(\mathbf{Cntr})$. Recall that every contractible space $Z$ is the retract (and therefore quotient) of the cone $CZ=Z\times \ui/Z\times \{0\}$ over $Z$. Since coreflective hulls are closed under quotients, it suffices to show $CZ\in \fan$ for any (possibly non-path-connected) space $Z$. Let $t_z$ denote the image of $(z,t)$ in $CZ$ and $v=0_z$ be the vertex. Using the exponential law for based path spaces it is straightforward to verify that the map $f:CZ\to P(CZ,v)$, $f(t_z)(s)=(st)_{z}$ is continuous. If $ev:P(CZ,v)\to CZ$ is evaluation, then $ev\circ f=id$, showing that $ev$ is a retraction and therefore a quotient map. It follows from Lemma \ref{quotienteval} that $CZ\in \fan$.
\end{proof}
\begin{definition}\cite{Brazsemi}
A map $f:X\to Y$ has \textit{continuous lifting of paths} if for every $x\in X$, the induced map $f_{\#}:P(X,x)\to P(Y,f(x))$ is a homeomorphism.
\end{definition}
\begin{proposition}
The following are equivalent for a map $p:\widehat{X}\to X$:
\begin{enumerate}
\item $p$ has continuous lifting of paths,
\item $p$ is a weak $\mcf$-covering map,
\item $p$ is a weak $\mathbf{Cntr}$-covering map.
\end{enumerate}
\end{proposition}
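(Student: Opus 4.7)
The plan is to prove the equivalences by the cycle $(3) \Rightarrow (2) \Rightarrow (1) \Rightarrow (3)$.

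The implication $(3) \Rightarrow (2)$ is immediate: every object of $\mcf$ is contractible (each $F(J)$ is a cone), so $\mcf \subset \mathbf{Cntr}$, and any unique lifting property that holds for the larger class $\mathbf{Cntr}$ restricts to $\mcf$.

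For $(2) \Rightarrow (1)$, suppose $p$ is a weak $\mcf$-covering. Since $D^2 \in \mcf$ with $\pi_1(D^2,d)=1$, $p$ is a disk-covering and hence has unique path lifting; this makes $p_{\#}:P(\widehat{X},\hat{x})\to P(X,p(\hat{x}))$ injective, and the same simple connectivity argument applied to the interval shows every path in $X$ starting at $p(\hat{x})$ lifts, so $p_\#$ is a bijection. To get continuity of the inverse, I use the bijective correspondence (noted before Lemma \ref{quotienteval}) between maps $(F(J),b_0)\to(X,p(\hat x))$ and convergent nets in $P(X,p(\hat x))$: given a convergent net $\alpha_j \to \alpha_\infty$, assemble it into a map $\phi:(F(J),b_0)\to(X,p(\hat{x}))$; since $F(J)\in\mcf$ and $\pi_1(F(J),b_0)=1$, the weak $\mcf$-covering property produces a unique lift $\widetilde{\phi}:(F(J),b_0)\to(\widehat{X},\hat{x})$, and its arcs yield a convergent net $\widetilde{\alpha}_j \to \widetilde{\alpha}_\infty$ in $P(\widehat{X},\hat{x})$. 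Thus $p_\#^{-1}$ preserves net convergence and is continuous.

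For $(1) \Rightarrow (3)$, suppose $p$ has continuous lifting of paths, and let $Y$ be contractible, $\hat{x}\in\widehat{X}$, and $f:(Y,y)\to(X,p(\hat{x}))$ a based map; the subgroup hypothesis is automatic since $\pi_1(Y,y)=1$. Consider the continuous composition
\[
\psi: P(Y,y) \xrightarrow{f_\#} P(X,p(\hat{x})) \xrightarrow{p_\#^{-1}} P(\widehat{X},\hat{x}) \xrightarrow{ev} \widehat{X},
\]
so $\psi(\gamma) = \widetilde{f\circ\gamma}_{\hat{x}}(1)$. I claim $\psi(\gamma)$ depends only on $\gamma(1)$, which will allow me to factor $\psi = \tilde{f}\circ ev$ through evaluation $ev:P(Y,y)\to Y$. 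Given two paths $\gamma,\gamma':[0,1]\to Y$ with common endpoints, pick a path-homotopy $H$ between them (possible since $Y$ is simply connected) and view $s\mapsto H_s$ as a path in $P(Y,y)$; applying $\psi$ gives a path $\eta$ in $\widehat{X}$ whose projection $p\circ\eta$ is constant at $f(\gamma(1))$. Since $p_\#$ is injective, $p$ has unique path lifting, forcing $\eta$ to be constant; thus $\psi(\gamma)=\psi(\gamma')$ and $\tilde{f}$ is well-defined. By Theorem \ref{contractible} every contractible space lies in $\fan$, so by Lemma \ref{quotienteval} the map $ev:P(Y,y)\to Y$ is a quotient map; continuity of $\psi$ then forces $\tilde{f}$ to be continuous. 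Uniqueness of $\tilde{f}$ among based lifts follows from path-connectedness of $Y$ together with unique path lifting.

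The main obstacle, as usual with such constructions, is the well-definedness step in $(1)\Rightarrow(3)$: converting simple connectivity of $Y$ into an actual identification of the two candidate endpoints $\widetilde{f\circ\gamma}(1)$ and $\widetilde{f\circ\gamma'}(1)$. The trick above, viewing the path-homotopy $H$ as a single path in $P(Y,y)$ and exploiting continuity of $p_\#^{-1}$ to transport it to a path in $P(\widehat{X},\hat{x})$ with constant projection, is the key use of the continuous-path-lifting hypothesis and is exactly the place where unique path lifting (available because $p_\#$ is a homeomorphism, hence injective) is needed.
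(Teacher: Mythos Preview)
Your argument is correct, but it takes a different route from the paper's for the implication linking conditions (2) and (3). The paper proves $(2)\Leftrightarrow(3)$ by establishing the general fact that if $\hullc=\hulld$ then the notions of weak $\mcc$-covering and weak $\mcd$-covering coincide; it does this by invoking Lemma~\ref{ccovtohullccovlemma} to pass through the coreflection $c(\hX)$ and lift there. Your $(1)\Rightarrow(3)$ instead builds the lift $\tilde f$ concretely by factoring the continuous composite $ev\circ p_{\#}^{-1}\circ f_{\#}$ through the quotient map $ev:P(Y,y)\to Y$, appealing to Theorem~\ref{contractible} and Lemma~\ref{quotienteval} to know that $ev$ is quotient when $Y$ is contractible. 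This is essentially the same diagram-chase that the paper later uses in Lemma~\ref{liftsexist2}, so you are in effect previewing that argument. The paper's approach buys a reusable abstract principle (weak coverings depend only on the coreflective hull), while yours is more self-contained and avoids the coreflection machinery at the cost of being specific to the contractible case. Your well-definedness step---turning a path-homotopy in $Y$ into a path in $P(Y,y)$, pushing it through $p_{\#}^{-1}$, and then collapsing it via unique path lifting---is a nice alternative to the standard disk-lifting argument and makes transparent use of the continuity of $p_{\#}^{-1}$.
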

\begin{proof}
(1) $\Leftrightarrow$ (2) Suppose $\hat{x}\in\widehat{X}$, and $p(\hat{x})=x$. First observe that $p_{\#}:P(\widehat{X},\hat{x})\to P(X,x)$ is a homeomorphism if and only if every map $f:(F(J),b_0)\to (X,x)$ on a directed arc-fan lifts uniquely to a map $\hat{f}:(F(J),b_0)\to (\widehat{X},\hat{x})$ such that $p\circ\hat{f}=f$. Since directed arc-fans are simply connected, it is immediate that every $\mcf$-covering has continuous lifting of paths. It follows directly from exponential laws of mapping spaces that every map with continuous lifting of paths also is a disk-covering \cite[Remark 2.5]{Brazoverlay}.

(2) $\Leftrightarrow$ (3) We prove the more general fact that if $\hullc=\hulld$, then $p:\hX\to X$ is a weak $\mcc$-covering if and only if $p$ is a weak $\mcd$-covering. By symmetry, we only need to prove one direction. Suppose $p$ is a weak $\mcc$-covering and $f:(Y,y)\to (X,x)$ is a map with $Y\in \mcd$ and $f_{\ast}(\pi_1(Y,y))\subseteq p_{\ast}(\pi_1(\hX,\hx))$. Since $p$ is a disk-covering it suffices to show the unique function $\widehat{f}:(Y,y)\to (\hX,\hx)$ such that $p\circ \widehat{f}=f$ is continuous. If $c:\topz\to\hullc$ is the coreflection, then by Lemma \ref{ccovtohullccovlemma} $\psi(p):c(\hX)\to X$ is a $\hullc$-covering. Since $Y\in \hulld=\hullc$, the unique lift $\widehat{f}:(Y,y)\to (c(\hX),\hx)$ is continuous. Composing this map with the continuous identity $id:c(\hX)\to\hX$ shows that $\widehat{f}:Y\to \hX$ is continuous.
\[\xymatrix{
c(\widehat{X}) \ar[drr]^-{\psi(p)} \ar[rr]^-{id} && \widehat{X} \ar[d]^-{p}\\ Y \ar[rr]_-{f} \ar@{-->}[u]^-{\widehat{f}} && X
}\]
\end{proof}
\begin{corollary}\label{contlifting2}
A map $p:\widehat{X}\to X$ is a $\fan$-covering map if and only if $\widehat{X}\in\fan$ and $p$ has continuous lifting of paths.
\end{corollary}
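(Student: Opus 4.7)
The plan is to derive this corollary quickly from the equivalences of the preceding proposition together with Lemma \ref{ccovtohullccovlemma}, which lets us upgrade a weak $\mcc$-covering to a genuine $\hullc$-covering whenever the total space is already coreflective.

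For the forward direction I would argue as follows. Suppose $p:\widehat{X}\to X$ is a $\fan$-covering. By condition (1) of Definition \ref{ccovdef} (with $\mcc=\fan$), we have $\widehat{X}\in\fan$ immediately. Since $\mcf\subseteq\scrh(\mcf)=\fan$, the lifting condition (2) of Definition \ref{ccovdef} restricted to test spaces $Y\in\mcf$ gives that $p$ is a weak $\mcf$-covering. The preceding proposition then tells us $p$ has continuous lifting of paths.

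For the reverse direction, assume $\widehat{X}\in\fan$ and $p$ has continuous lifting of paths. The preceding proposition produces that $p$ is a weak $\mcf$-covering. Now invoke Lemma \ref{ccovtohullccovlemma} with $\mcc=\mcf$: the map $\psi(p):c(\widehat{X})\to X$, where $c$ denotes the coreflection $\topz\to\scrh(\mcf)=\fan$, is a $\fan$-covering. But by assumption $\widehat{X}$ is already an object of $\fan$, so the coreflection is the identity: $c(\widehat{X})=\widehat{X}$ as topological spaces and $\psi(p)=p$. Thus $p:\widehat{X}\to X$ is itself a $\fan$-covering.

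There is no serious obstacle here; the only point that requires a small verification is the identification $c(\widehat{X})=\widehat{X}$ for $\widehat{X}\in\fan$, which is a direct consequence of the universal property of the coreflection (Proposition \ref{coreflectionbasicprop}) applied to the identity map $\widehat{X}\to\widehat{X}$. The whole corollary is therefore essentially a bookkeeping statement: the proposition bridges continuous path-lifting and the weak-covering notion, while Lemma \ref{ccovtohullccovlemma} bridges weak $\mcc$-coverings and $\hullc$-coverings; once $\widehat{X}$ is already in $\fan$, the coreflection step is vacuous, giving the desired equivalence.
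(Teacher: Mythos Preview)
Your argument is correct and is precisely the intended derivation: the paper leaves this corollary without an explicit proof, and your use of the preceding proposition together with Lemma~\ref{ccovtohullccovlemma} (with the observation that $c(\widehat{X})=\widehat{X}$ when $\widehat{X}\in\fan$) is exactly the right way to fill it in.
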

%
%
In the previous section, we identified the topology of $\lpcz$-coverings as the whisker topology. In the same fashion, we identify the topology of $\fan$-coverings as the natural quotient topology. Let $\tXh^{qtop}$ denote the space with the same underlying set of $\tXh$ (used in the previous section) with the quotient topology with respect to the map $q:P(X,x_0)\to \tXh$, $q(\alpha)=[\alpha]_H$.

Using exponential properties of mapping spaces, it is shown in \cite{Brazsemi} that the endpoint projection $p_H:\tXh^{qtop}\to X$, $p_H([\alpha]_H)=\alpha(1)$ induces a retraction $(p_H)_{\#}:P(\tXh^{qtop},\txh)\to P(X,x_0)$ on path spaces. The canonical section $s:P(X,x_0)\to P(\tXh^{qtop},\txh)$ is given by $s(\alpha)=\wt{\alpha}_{\mathscr{S}}$, the standard lifts of paths starting at $\txh$. Thus the inclusion $H\subseteq(p_H)_{\ast}\left(\pi_1\left(\tXh^{qtop},\txh\right)\right)$ holds in general. Since a retraction is a homeomorphism if and only if it is injective, the next lemma follows immediately.
\begin{lemma}
\cite[Theorem 7.7]{Brazsemi} The endpoint projection $p_H:\tXh^{qtop}\to X$, $p_H([\alpha]_H)=\alpha(1)$ has continuous lifting of paths if and only if it has the unique path lifting property.
\end{lemma}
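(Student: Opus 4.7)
The plan is to exploit the just-recalled retraction structure via the elementary observation that an injective continuous retraction is automatically a homeomorphism: if $r$ is a continuous retraction with continuous section $s$ (so $r \circ s = \mathrm{id}$), then for any $a$, injectivity of $r$ applied to $r(s(r(a))) = r(a)$ gives $s(r(a)) = a$, so $s$ is also a left inverse. The forward implication of the lemma is then immediate: if $p_H$ has continuous lifting of paths, then $(p_H)_\#$ is a homeomorphism at every basepoint, hence injective everywhere, and by the characterization noted just after Definition \ref{upldef}, $p_H$ has the unique path lifting property.

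For the converse, suppose $p_H$ has unique path lifting, so $(p_H)_\#$ is injective at every basepoint. At the distinguished basepoint $\txh$, we already know $(p_H)_\#: P(\tXh^{qtop},\txh) \to P(X,x_0)$ is a retraction with continuous section $s(\alpha) = \wt{\alpha}_{\mathscr{S}}$; injectivity upgrades this retraction to a homeomorphism. To promote this to an arbitrary basepoint $\tilde x = [\alpha_0]_H \in \tXh^{qtop}$, the same scheme applies once one exhibits an analogous continuous section $s_{\tilde x}: P(X,\alpha_0(1)) \to P(\tXh^{qtop},\tilde x)$ sending $\beta$ to the standard lift $t \mapsto [\alpha_0 \cdot \beta_t]_H$, where $\beta_t(s) = \beta(st)$. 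One way to establish continuity of $s_{\tilde x}$ is to factor it through the distinguished case: left-concatenation $\lambda_{\alpha_0}: P(X,\alpha_0(1)) \to P(X,x_0)$ is continuous, then $s$ produces a path in $\tXh^{qtop}$ from $\txh$ that passes through $\tilde x$ at parameter $1/2$, and restricting and reparametrizing its second half yields exactly $s_{\tilde x}(\beta)$; each step is continuous for the quotient topology on $\tXh^{qtop}$ inherited from $P(X,x_0)$.

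The main obstacle is really just this verification at non-standard basepoints, which is a routine replay of the construction in \cite{Brazsemi}. Everything substantive is packaged into the author's emphasized remark: for a continuous retraction, being a homeomorphism is equivalent to being injective, and injectivity of $(p_H)_\#$ at every basepoint is exactly the unique path lifting property.
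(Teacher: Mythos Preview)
Your proof is correct and follows exactly the approach the paper outlines in the paragraph preceding the lemma: since $(p_H)_{\#}$ is a continuous retraction, it is a homeomorphism if and only if it is injective, and injectivity at every basepoint is precisely the unique path lifting property. The paper itself defers the details to \cite{Brazsemi}, so your explicit construction of the section $s_{\tilde x}$ at an arbitrary basepoint fills in the one point the paper's one-line justification leaves implicit.
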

\begin{lemma}\label{liftsexist2} Suppose $Y\in\fan$, $y_0\in Y$, $\alpha\in P(X,x_0)$, and $f:(Y,y_0)\to (X,\alpha(1))$ is a map such that $f_{\ast}(\pi_1(Y,y_0))\subseteq [\alpha^{-}]H[\alpha]$. Then there is a continuous map $\wt{f}:(Y,y_0)\to (\tXh^{qtop},[\alpha]_H)$ such that $p_H\circ \wt{f}=f$ defined by $\wt{f}(y)=[\alpha\cdot (f\circ \tau)]_H$ where $\tau:\ui\to Y$ is any path from $y_0$ to $y$.
\end{lemma}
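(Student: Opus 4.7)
The plan is to verify this in the same three-step pattern as Lemma \ref{liftsexist}: show $\wt{f}$ is well-defined as a function, check that $p_H\circ\wt{f}=f$, and then establish continuity. The first two steps are essentially formal and mirror the classical argument, so the real content lies in exploiting the structure of $\fan$ and the quotient topology on $\tXh^{qtop}$ to prove continuity.

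First I would check well-definedness. If $\tau_1,\tau_2:\ui\to Y$ are two paths from $y_0$ to $y$, then $\tau_1\cdot\tau_2^-$ is a loop at $y_0$, so $[f\circ(\tau_1\cdot\tau_2^-)]=[(f\circ\tau_1)\cdot(f\circ\tau_2)^-]\in f_{\ast}(\pi_1(Y,y_0))\subseteq[\alpha^-]H[\alpha]$. Conjugating by $\alpha$ shows $[\alpha\cdot(f\circ\tau_1)\cdot(f\circ\tau_2)^-\cdot\alpha^-]\in H$, hence $[\alpha\cdot(f\circ\tau_1)]_H=[\alpha\cdot(f\circ\tau_2)]_H$, which is what well-definedness requires. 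The equality $\wt{f}(y_0)=[\alpha]_H$ follows by taking $\tau=c_{y_0}$, and $p_H(\wt{f}(y))=\alpha\cdot(f\circ\tau)(1)=f(y)$ is immediate.

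The hard part is continuity. Since $Y\in\fan$, Lemma \ref{quotienteval} tells me that the evaluation map $ev:P(Y,y_0)\to Y$, $ev(\tau)=\tau(1)$, is a quotient map. So it suffices to prove that $\wt{f}\circ ev:P(Y,y_0)\to\tXh^{qtop}$ is continuous. Now on $\tau\in P(Y,y_0)$,
\[
(\wt{f}\circ ev)(\tau)=\wt{f}(\tau(1))=[\alpha\cdot(f\circ\tau)]_H=q\bigl(\alpha\cdot(f\circ\tau)\bigr),
\]
where $q:P(X,x_0)\to\tXh^{qtop}$ is the defining quotient map for the topology on $\tXh^{qtop}$. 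It therefore suffices to show that the map $\Phi:P(Y,y_0)\to P(X,x_0)$, $\Phi(\tau)=\alpha\cdot(f\circ\tau)$, is continuous, because then $\wt{f}\circ ev=q\circ\Phi$ is continuous. Now $\Phi$ factors as the postcomposition map $f_{\#}:P(Y,y_0)\to P(X,\alpha(1))$, $\tau\mapsto f\circ\tau$, followed by the left-concatenation map $\lambda_{\alpha}:P(X,\alpha(1))\to P(X,x_0)$, $\beta\mapsto\alpha\cdot\beta$; both are standard continuous operations on the compact-open path space.

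Stringing everything together: $\wt{f}\circ ev=q\circ\lambda_{\alpha}\circ f_{\#}$ is continuous, and $ev$ is a quotient map, so $\wt{f}$ is continuous. I expect the only conceptual obstacle to be keeping straight that the continuity of $\wt{f}$ requires $Y\in\fan$ precisely so that evaluation out of the path space is quotient; without this, the argument for $\lpcz$-coverings via the whisker topology (Lemma \ref{liftsexist}) would not transfer to the quotient topology.
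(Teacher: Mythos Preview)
Your proposal is correct and follows essentially the same approach as the paper: the paper also reduces continuity to the commutativity of the square with top row $q\circ\lambda_{\alpha}\circ f_{\#}$ and left side the quotient map $ev:P(Y,y_0)\to Y$ from Lemma~\ref{quotienteval}, invoking the universal property of quotients. The only difference is that the paper dispatches well-definedness by reference to Lemma~\ref{liftsexist} rather than writing it out.
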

\begin{proof}
As in Lemma \ref{liftsexist}, $\wt{f}$ is well-defined; we are left to verify continuity. Consider the following commuting diagram.
\[\xymatrix{
 P(Y,y_0) \ar[d]_-{ev}  \ar[r]^-{f_{\#}} & P(X,f(y_0))  \ar[r]^-{\lambda_{\alpha}} & P(X,x_0)  \ar[d]^-{q}\\
Y \ar[rr]_-{\wt{f}} && \tXh^{qtop}
}\]The map $\lambda_{\alpha}(\beta)=\alpha\cdot\beta$ is left concatenation by $\alpha$. Each function in the top composition is continuous and the left vertical map is a quotient map by Lemma \ref{quotienteval}. Thus $\wt{f}$ is continuous by the universal property of quotient spaces.
\end{proof}
\begin{lemma}\label{fanhomeomorphismlemma}
If $p:\widehat{X}\to X$ is a $\fan$-covering and $H=p_{\ast}(\pi_1(\hX,\hx))\subseteq \pionex$, then there is a homeomorphism $\hat{p}:\hX\to\tXh^{qtop}$ such that $p_H\circ\hat{p}=p$.
\end{lemma}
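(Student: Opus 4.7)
The plan is to mimic the structure of the proof of Lemma \ref{topologyofgencovlemma}, replacing the whisker topology with the quotient topology and using Corollary \ref{contlifting2} to upgrade continuity of $\hat{p}^{-1}$. Since $\widehat{X} \in \fan$ and $H \subseteq p_{\ast}(\pi_1(\widehat{X},\hat{x}))$ trivially, Lemma \ref{liftsexist2} immediately produces a continuous map $\hat{p} : (\widehat{X},\hat{x}) \to (\widetilde{X}_H^{qtop},\tilde{x}_H)$ satisfying $p_H \circ \hat{p} = p$, defined by $\hat{p}(z) = [p \circ \tau]_H$ where $\tau$ is any path in $\widehat{X}$ from $\hat{x}$ to $z$.

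Next I would verify that $\hat{p}$ is a bijection by exactly the argument used in Lemma \ref{topologyofgencovlemma}. For surjectivity, given $[\alpha]_H \in \widetilde{X}_H^{qtop}$, lift $\alpha$ uniquely to $\widetilde{\alpha} : (I,0) \to (\widehat{X},\hat{x})$ via the disk-covering $p$; then $\hat{p}(\widetilde{\alpha}(1)) = [\alpha]_H$. For injectivity, if $\hat{p}(z_1) = \hat{p}(z_2)$ and $\tau_i$ are paths from $\hat{x}$ to $z_i$, then $[(p \circ \tau_1)\cdot(p\circ\tau_2)^{-}] \in H = p_{\ast}(\pi_1(\widehat{X},\hat{x}))$, so the loop $(p\circ\tau_1)\cdot(p\circ\tau_2)^{-}$ lifts to a loop in $\widehat{X}$ based at $\hat{x}$, which by unique path lifting forces $z_1 = z_2$.

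The main remaining step, and the place where the $\fan$-hypothesis genuinely enters, is showing that the continuous bijection $\hat{p}$ has continuous inverse. By definition of the quotient topology on $\widetilde{X}_H^{qtop}$ with respect to $q : P(X,x_0) \to \widetilde{X}_H^{qtop}$, $\alpha \mapsto [\alpha]_H$, it suffices to show that the composite $\hat{p}^{-1} \circ q : P(X,x_0) \to \widehat{X}$ is continuous. By construction of $\hat{p}$, this composite sends a path $\alpha \in P(X,x_0)$ to the endpoint $\widetilde{\alpha}(1)$ of its unique $p$-lift starting at $\hat{x}$. Because $p$ is a $\fan$-covering, Corollary \ref{contlifting2} tells us that $p$ has continuous lifting of paths, i.e.\ $p_{\#} : P(\widehat{X},\hat{x}) \to P(X,x_0)$ is a homeomorphism. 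Thus $(p_{\#})^{-1} : P(X,x_0) \to P(\widehat{X},\hat{x})$ is continuous, and so is its composition with the evaluation map $ev_1 : P(\widehat{X},\hat{x}) \to \widehat{X}$, $\gamma \mapsto \gamma(1)$. This composition agrees with $\hat{p}^{-1}\circ q$, so $\hat{p}^{-1}\circ q$ is continuous, whence $\hat{p}^{-1}$ is continuous by the universal property of the quotient.

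The potentially delicate part is verifying that $\hat{p}^{-1}\circ q$ really equals $ev_1 \circ (p_{\#})^{-1}$; this is just an unpacking of definitions together with unique path lifting (every path in $X$ starting at $x_0$ has a unique $p$-lift starting at $\hat{x}$, which is precisely $(p_{\#})^{-1}(\alpha)$), but it is the conceptual heart of the argument and the reason the quotient topology, rather than the whisker topology, is the correct model for $\fan$-coverings.
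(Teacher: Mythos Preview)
Your proposal is correct and uses the same ingredients as the paper's proof: the bijectivity argument is identical, and the key fact that $p_{\#}:P(\widehat{X},\hat{x})\to P(X,x_0)$ is a homeomorphism (continuous lifting of paths) together with the quotient map $q$ is exactly what drives the result. The paper packages the homeomorphism step slightly more symmetrically: rather than checking continuity of $\hat{p}$ via Lemma~\ref{liftsexist2} and continuity of $\hat{p}^{-1}$ separately, it observes that in the commuting square
\[
\xymatrix{
P(\widehat{X},\hat{x}) \ar[r]^-{p_{\#}}_-{\cong} \ar[d]_-{ev} & P(X,x_0) \ar[d]^-{q}\\
\widehat{X} \ar[r]_-{\hat{p}} & \widetilde{X}_{H}^{qtop}
}
\]
both vertical maps are quotient (the left one by Lemma~\ref{quotienteval}, since $\widehat{X}\in\fan$) and the top is a homeomorphism, so $\hat{p}$ is a bijective quotient map, hence a homeomorphism. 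This is the same diagram you are implicitly using, just read in one stroke rather than two.
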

\begin{proof}
As in Lemma \ref{topologyofgencovlemma}, there is a canonical bijection $\hat{p}:\hX\to\tXh^{qtop}$ defined as follows. If $\gamma$ is any path in $\hX$ starting at $x_0$, then $\hat{p}(\gamma(1))=[p \circ \gamma]_H$. Consider the following diagram which commutes since if $\gamma\in P(\hX,\hx)$, then $\hat{p}(ev(\gamma))=\hat{p}(\gamma(1))=[p\circ\gamma]_H=q(p_{\#}(\gamma))$.
\[\xymatrix{
P(\hX,\hx) \ar[r]_-{\cong}^-{p_{\#}} \ar[d]_-{ev} & P(X,x_0)  \ar[d]^{q}\\
\hX \ar[r]_-{\hat{p}} & \tXh^{qtop}
}\]Since $p$ has continuous lifting of paths (Corollary \ref{contlifting2}), the top map is a homeomorphism. Since $\hX\in\fan$, the left vertical map is quotient by Lemma \ref{quotienteval}. The right vertical map is quotient by construction. Therefore the bottom horizontal map must be a continuous quotient map. Since $\hat{p}$ is bijective, it is a homeomorphism.
\end{proof}
As we did for the locally path-connected case, we can now answer the Structure Question for the category $\fan$: the topology of any $\fan$-covering space is equivalent to the natural quotient topology.
\begin{theorem}\label{equivalencequotienttop}
For any subgroup $H\subseteq \pionex$, the following are equivalent:
\begin{enumerate}
\item $H$ is a $\fan$-covering subgroup of $\pionex$,
\item $p_H:\tXh^{qtop}\to X$ is a $\fan$-covering,
\item $p_H:\tXh^{qtop}\to X$ has the unique path lifting property,
\item $(p_H)_{\ast}\left(\pi_1(\tXh^{qtop},\txh)\right)=H$.
\end{enumerate}
\end{theorem}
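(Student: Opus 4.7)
The plan is to mirror the structure of Theorem \ref{equivalence1} for the $\lpcz$-case, replacing the whisker topology arguments with their quotient topology analogues. The implication (2) $\Rightarrow$ (1) is immediate, and (2) $\Rightarrow$ (3) follows because every $\fan$-covering is a weak $\mcf$-covering, hence a disk-covering, hence has the unique path lifting property. For (1) $\Rightarrow$ (2), given an abstract $\fan$-covering $p:(\hX,\hx)\to(X,x_0)$ with $p_{\ast}(\pi_1(\hX,\hx))=H$, Lemma \ref{fanhomeomorphismlemma} supplies a homeomorphism $\hat{p}:\hX\to\tXh^{qtop}$ with $p_H\circ\hat{p}=p$, which transports the $\fan$-covering structure of $p$ onto $p_H$.

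The equivalence (3) $\Leftrightarrow$ (4) is essentially identical to the corresponding equivalence in Lemma \ref{genunivcov}, since that proof only uses the formal properties of the standard lifts $\wt{\alpha}_{\mathscr{S}}$ and the defining relation on $\tXh$. For (3) $\Rightarrow$ (4), the inclusion $H\subseteq (p_H)_{\ast}(\pi_1(\tXh^{qtop},\txh))$ was already noted in the text preceding Lemma \ref{liftsexist2}, and conversely any loop $\gamma$ at $\txh$ with $p_H\circ\gamma=\alpha$ must equal $\wt{\alpha}_{\mathscr{S}}$ by unique path lifting, so $\txh=\gamma(1)=[\alpha]_H$ forces $[\alpha]\in H$. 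For (4) $\Rightarrow$ (3), if $\beta$ is a lift of $\alpha$ with $\beta(0)=\txh$ and $\gamma=\beta_1$ denotes the path representative of $\beta(1)$, then $\beta\cdot(\wt{\gamma}_{\mathscr{S}})^{-}$ is a loop at $\txh$ whose $p_H$-image $\alpha\cdot\gamma^{-}$ lies in $H$ by (4), so $[\alpha]_H=[\gamma]_H$, i.e. $\beta(1)=\wt{\alpha}_{\mathscr{S}}(1)$.

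The main step is the implication (3) and (4) $\Rightarrow$ (2). First I would verify that $\tXh^{qtop}\in\fan$ by invoking Lemma \ref{quotienteval}. The standard-lift map $s:P(X,x_0)\to P(\tXh^{qtop},\txh)$, $s(\alpha)=\wt{\alpha}_{\mathscr{S}}$, is continuous — it is the canonical section of the retraction $(p_H)_{\#}$ discussed just before Lemma \ref{liftsexist2} — and satisfies $ev\circ s=q$, where $q:P(X,x_0)\to\tXh^{qtop}$ is the defining quotient map and $ev:P(\tXh^{qtop},\txh)\to\tXh^{qtop}$ is evaluation. Since a continuous map that factors a quotient map through a continuous section is itself quotient, $ev$ is a quotient map, so $\tXh^{qtop}\in\fan$ by Lemma \ref{quotienteval}. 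Now given $Y\in\fan$, $\tx=[\alpha]_H\in\tXh^{qtop}$, and $f:(Y,y)\to (X,\alpha(1))$ with $f_{\ast}(\pi_1(Y,y))\subseteq (p_H)_{\ast}(\pi_1(\tXh^{qtop},\tx))=[\alpha^{-}]H[\alpha]$, Lemma \ref{liftsexist2} produces a continuous lift $\wt{f}:(Y,y)\to (\tXh^{qtop},\tx)$, and (3) together with the path-connectedness of $Y$ forces uniqueness.

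The main obstacle I anticipate is establishing $\tXh^{qtop}\in\fan$; once one spots the factoring $ev\circ s=q$ and appeals to Theorem 7.7 of \cite{Brazsemi} to get continuity of $s$, the remaining verification is a templated application of the preceding lemmas. No genuinely new ideas are needed — the prior infrastructure (especially Lemmas \ref{liftsexist2} and \ref{fanhomeomorphismlemma}, and the retraction/section analysis of path spaces) has been assembled precisely so that each step becomes the $\fan$-analogue of the corresponding step in Theorem \ref{equivalence1}.
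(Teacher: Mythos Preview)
Your proof is correct and follows essentially the same route as the paper: (2) $\Leftrightarrow$ (3) $\Leftrightarrow$ (4) via the argument of Lemma \ref{genunivcov} with Lemma \ref{liftsexist2} in place of Lemma \ref{liftsexist}, (1) $\Rightarrow$ (2) via Lemma \ref{fanhomeomorphismlemma}, and (2)\&(4) $\Rightarrow$ (1) by definition. Your verification that $\tXh^{qtop}\in\fan$ via the factorization $ev\circ s=q$ and Lemma \ref{quotienteval} is correct and makes explicit a point the paper leaves tacit.
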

\begin{proof}
Using Lemma \ref{liftsexist2}, the proof of (2) $\Leftrightarrow$ (3) $\Leftrightarrow$ (4) is identical to the proof of Lemma \ref{genunivcov}. (1) $\Rightarrow$ (2) follows directly from Lemma \ref{fanhomeomorphismlemma}. (2) \& (4) $\Rightarrow$ (1) is by definition.
\end{proof}
\begin{remark}
In general, it is known that the whisker topology on $\tXh$ is finer than the quotient topology of $\tXh^{qtop}$ \cite{VZ14compare}. Our characterization of the topology of $\lpcz$- and $\mathbf{Fan}$-coverings indicates that if $H$ is a $\fan$-covering subgroup of $\pionex$, then $lpc\left(\tXh^{qtop}\right)\cong \tXh$.
\end{remark}
\end{document}